\newcommand{\A}{\mathbb{A}}
\newcommand{\F}{\mathbb{F}}
\newcommand{\G}{\mathbb{G}}
\renewcommand{\P}{\mathbb{P}}
\newcommand{\Q}{\mathbb{Q}}
\newcommand{\Z}{\mathbb{Z}}
\newcommand{\sO}{\mathcal{O}}
\newcommand{\bP}{\mathbb{P}}
\newcommand{\Cor}{\operatorname{\mathbf{Cor}}}
\newcommand{\HI}{{\operatorname{\mathbf{HI}}}}
\newcommand{\PR}{{\operatorname{\mathbf{PRig}}}}
\newcommand{\PRig}{{\operatorname{\mathbf{PRig}}}}
\newcommand{\PI}{{\operatorname{\mathbf{PI}}}}
\newcommand{\Frac}{\operatorname{Frac}}
\newcommand{\PST}{{\operatorname{\mathbf{PST}}}}
\newcommand{\NST}{\operatorname{\mathbf{NST}}}
\newcommand{\DM}{\operatorname{\mathbf{DM}}}
\newcommand{\uHom}{\operatorname{\underline{Hom}}}
\newcommand{\Coker}{\operatorname{Coker}}
\newcommand{\Br}{\operatorname{Br}}
\newcommand{\Spec}{\operatorname{Spec}}
\newcommand{\Proj}{\operatorname{Proj}}
\newcommand{\Fld}{\operatorname{\mathbf{Fld}}}
\newcommand{\Sm}{\operatorname{\mathbf{Sm}}}
\newcommand{\Sch}{\operatorname{\mathbf{Sch}}}
\newcommand{\Ab}{\operatorname{\mathbf{Ab}}}
\newcommand{\tr}{{\operatorname{tr}}}
\newcommand{\eff}{{\operatorname{eff}}}
\newcommand{\op}{{\operatorname{op}}}
\newcommand{\red}{{\operatorname{red}}}
\newcommand{\Zar}{{\operatorname{Zar}}}
\newcommand{\Nis}{{\operatorname{Nis}}}
\newcommand{\et}{{\operatorname{\acute{e}t}}}
\newcommand{\id}{{\operatorname{Id}}}
\newcommand{\pd}{{\partial}}
\newcommand{\CH}{{\operatorname{CH}}}
\renewcommand{\lim}{\operatornamewithlimits{\varprojlim}}
\newcommand{\colim}{\operatornamewithlimits{\varinjlim}}
\newcommand{\ol}{\overline}
\renewcommand{\phi}{\varphi}
\renewcommand{\epsilon}{\varepsilon}
\newcommand{\gm}{{\operatorname{gm}}}
\newcommand{\ur}{{\operatorname{ur}}}
\newcommand{\trdeg}{\operatorname{\mathbf{trdeg}}}
\newcommand{\pr}{\operatorname{pr}}
\newcommand{\Deg}{\operatorname{deg}}
\newcommand{\Qbullet}{Q_\bullet (X)} 
\newcommand{\olDelta}[1]{{\ol \Delta}^{#1}} 
\newcommand{\olGamma}{\ol \Gamma} 
\newcommand{\tildegamma}{\widetilde \Gamma}
\newcommand{\br}[1]{\left\{ #1 \right\} }
\newcommand{\midd}{\ \middle|\ }
\newcommand{\downin}{\rotatebox{-90}{$\in $}}
\newcommand{\Uprime}{T} 
\newtheorem{lemma}{Lemma}[section]
\newtheorem{thm}[lemma]{Theorem}
\newtheorem{theorem}[lemma]{Theorem}
\newtheorem{prop}[lemma]{Proposition}
\newtheorem{proposition}[lemma]{Proposition}
\newtheorem{cor}[lemma]{Corollary}
\newtheorem{corollary}[lemma]{Corollary}
\theoremstyle{definition}
\newtheorem{definition}[lemma]{Definition}
\theoremstyle{remark}
\newtheorem{remark}[lemma]{Remark}
\numberwithin{equation}{section}
\newtheorem*{ack}{Acknowledgments}
\begin{document}

\title[Unramified logarithmic Hodge-Witt cohomology]
{Unramified logarithmic Hodge-Witt cohomology \\ and $\P^1$-invariance}

\author[W. Kai]{Wataru Kai}
\address{Institute of Mathematics\\ Tohoku University\\ Aoba\\ Sendai 980-8578\\ Japan}
\email{kaiw@tohoku.ac.jp}

\author[S. Otabe]{Shusuke Otabe}
\address{Department of Mathematics, School of Engineering, Tokyo Denki University, 5 Senju Asahi\\ Adachi\\ Tokyo 120-8551\\ Japan
}
\email{shusuke.otabe@mail.dendai.ac.jp}

\author[T. Yamazaki]{Takao Yamazaki}
\address{Institute of Mathematics\\ Tohoku University\\ Aoba\\ Sendai 980-8578\\ Japan}
\email{takao.yamazaki.b6@tohoku.ac.jp}

\date{\today}

\keywords{Universally trivial Chow group, 
presheaf with transfers, $\P^1$-invariance.}
\subjclass{14C15, 14M20}
 
\thanks{
The first author is supported by JSPS KAKENHI Grant (JP18K13382).    
The second author was supported by JSPS KAKENHI Grant (JP19J00366). 
The third author is supported by JSPS KAKENHI Grant (JP18K03232, JP21K03153). 
}

\begin{abstract}
Let $X$ be a smooth proper variety over a field $k$
and suppose that 
the degree map $\CH_0(X \otimes_k K) \to \Z$ is isomorphic for any field extension $K/k$.
We show that $G(\Spec k) \to G(X)$ is an isomorphism
for any $\P^1$-invariant Nisnevich sheaf with transfers $G$.
This generalizes a result of Binda--R\"ulling--Saito
that proves the same conclusion for reciprocity sheaves.
We also give a direct proof of the fact that
the unramified logarithmic Hodge-Witt cohomology
is a $\P^1$-invariant Nisnevich sheaf with transfers.
\end{abstract}

\maketitle

\section{Introduction}

A proper smooth variety $X$ over a field $k$ is said to be \textit{universally $\CH_0$-trivial} if for any field extension $K/k$, the degree map of the Chow group of zero-cycles induces an isomorphism $\Deg\colon\CH_0(X\otimes_kK)\xrightarrow{\simeq}\Z$. 
Basic examples of universally $\CH_0$-trivial varieties include rational (and more generally stably rational) varieties, and this property may be considered as a near rationality condition. 
The condition plays a crucial role in the degeneration method established by Voisin\,\cite{Voisin} and Colliot-Th\'el\`ene--Pirutka\,\cite{CTP}, where counterexamples to the L\"uroth problem are produced. 

Now it is natural to ask how to disprove the universal $\CH_0$-triviality for a given variety $X$. In this direction, Merkurjev (see \cite[Theorem 2.11]{M}) proved that $X$ is universally $\CH_0$-trivial if and only if the function field $k(X)$ has trivial unramified cohomology, i.e.\ $M_*(k)\simeq M_*(k(X))_{\ur}$ for all Rost's cycle modules $M_*$ over $k$. 
%
As a consequence, if $\ell$ is a prime number different from the characteristic $p$ of $k$, then $\ell$-primary torsion elements of the Brauer group 
$\Br(X):=H^2_{\et}(X,\G_m)$ not coming from $\Br(k)$
obstruct the universal $\CH_0$-triviality. This is because the $\ell$-primary torsion subgroups $\Br(K)[\ell^{\infty}]\simeq H^2_{\et}(K,\Q_{\ell}/\Z_{\ell}(1))$ for all field extensions $K/k$ give rise to a cycle module $M_*\colon K\mapsto\bigoplus_{i=0}^{\infty}H^{i+1}_{\et}(K,\Q_{\ell}/\Z_{\ell}(i))$, to which the theorem of Merkurjev can be applied.

There are, however, more obstructions other than cycle modules.
In \cite{Totaro16}, Totaro adopted as an obstruction the sheaves of differential forms $\Omega^i_{X/k}$ 
to disprove the universal $\CH_0$-triviality for a wide class of hypersurfaces.
%
%
In \cite{ABBG2}, Auel et al.\  used $\Br(-)[2^\infty]$ in characteristic $p=2$
to obtain a similar result for conic bundles over $\P^2$.
Neither $\Omega^i$ nor $\Br(-)[p^\infty]$ in characteristic $p>0$ constitute a cycle module.
In fact, it is not straightforward to extend Merkurjev's result to $\Br(-)[p^\infty]$. This gap was filled in by their previous work \cite{ABBG}:

\begin{theorem}{\rm (see \cite[Theorem 1.1]{ABBG})}\label{thm:Auel et al}
Let $X$ be a smooth proper variety over a field $k$
which is universally $\CH_0$-trivial.
Then the structure morphism $X\to\Spec k$ induces an isomorphism $\Br(k)\xrightarrow{\simeq}\Br(X)$.
\end{theorem}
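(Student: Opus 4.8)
I would split the assertion into injectivity and surjectivity of $\Br(k)\to\Br(X)$, the content being entirely in surjectivity. \emph{Injectivity} is a transfer argument: taking $K=k$ in the hypothesis gives $\CH_0(X)\xrightarrow{\sim}\Z$, so $X$ carries a zero-cycle of degree one and the degrees $[k(x):k]$ over closed points $x\in X$ have greatest common divisor $1$; since for each such $x$ the composite $\Br(k)\to\Br(X)\to\Br(k(x))\xrightarrow{\operatorname{cor}}\Br(k)$ is multiplication by $[k(x):k]$ and factors through $\Br(X)$, a class killed by $\Br(k)\to\Br(X)$ is killed by every $[k(x):k]$, hence by their gcd, hence vanishes.

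For \emph{surjectivity} I would first pass to the generic point: $X$ smooth integral gives $\Br(X)\hookrightarrow\Br(k(X))$, and $X$ proper gives that the image is the unramified subgroup $\Br_{\ur}(k(X)/k)$ of classes unramified at every divisorial valuation of $k(X)/k$ (purity for the Brauer group of a regular scheme). So it suffices to show $\Br_{\ur}(k(X)/k)\subseteq\operatorname{im}(\Br(k))$, which I would handle one prime at a time. Away from $p:=\car(k)$ this is Merkurjev's theorem \cite{M} recalled in the introduction, applied, for each prime $\ell\ne p$, to the Rost cycle module $F\mapsto\bigoplus_{i\ge0}H^{i+1}_{\et}(F,\Q_\ell/\Z_\ell(i))$: universal $\CH_0$-triviality forces it to agree at $k$ with its unramified part at $k(X)$, and the $i=1$ component is $\Br(-)[\ell^\infty]$, so $\Br(k)[\ell^\infty]\xrightarrow{\sim}\Br_{\ur}(k(X)/k)[\ell^\infty]$.

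The $p$-primary part is the heart of the proof, since cycle modules are not directly available for the Brauer group in characteristic $p$. My plan is to pass to logarithmic Hodge--Witt cohomology: for a field $F$ of characteristic $p$, the étale Kummer-type sequence $0\to\G_m\xrightarrow{p^n}\G_m\to W_n\Omega^1_{\log}\to0$ yields a natural isomorphism $H^1_{\et}(F,W_n\Omega^1_{F,\log})\xrightarrow{\sim}\Br(F)[p^n]$. One should then show that $F\mapsto\bigoplus_q H^1_{\et}(F,W_n\Omega^q_{F,\log})$, equipped with Kato's residue and norm maps and the Milnor $K$-theory module structure, is a Rost cycle module whose residues are the usual ramification maps on Brauer groups; granting this, Merkurjev's theorem applied to it gives $\Br(k)[p^n]\xrightarrow{\sim}\Br_{\ur}(k(X)/k)[p^n]$, and the limit over $n$, together with the prime-to-$p$ case, completes the proof. (Alternatively one may run the $p$-part through a decomposition of the diagonal $\Delta_X=(x_0\times X)+Z$ in $\CH_{\dim X}(X\times X)$ — integral because $X$ is universally $\CH_0$-trivial, with $x_0$ of degree one and $Z$ supported on $D\times X$ for a proper closed $D\subsetneq X$ — acting on logarithmic Hodge--Witt cohomology: the contribution of $x_0\times X$ comes from $\Spec k$, while that of $Z$ factors through a variety of dimension $<\dim X$ and must be shown to be negligible.)

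The hard part is this $p$-primary step. First, the field $k$ is arbitrary and one cannot reduce to the perfect case: $\Br(k^{\mathrm{perf}})[p^\infty]=0$, so passing to the perfection would destroy exactly the $p$-torsion in question, and the whole logarithmic Hodge--Witt package — including the definition of $W_n\Omega^q_{\log}$, perhaps best obtained through the Geisser--Levine identification $\Z/p^n(1)\simeq W_n\Omega^1_{\log}[-1]$ of motivic complexes — must be developed over $k$ itself. Second, verifying Rost's axioms for this family, above all the compatibility of norms with residues and the specialization axioms, genuinely uses Kato's study of residue homomorphisms in the presence of wild ramification together with the Gersten conjecture for $W_n\Omega^q_{\log}$ (Gros--Suwa); the latter also underlies the identification $\Br(X)[p^n]=\Br_{\ur}(k(X)/k)[p^n]$ in the reduction above. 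And if one argues instead via a decomposition of the diagonal, one must deal with the possibly singular support $D$ in the absence of resolution of singularities in positive characteristic, replacing it by a de Jong alteration and tracking degrees so that the actual classes, not merely their multiples, are reached. Putting these pieces together is where essentially all the work lies.
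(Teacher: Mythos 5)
The injectivity argument, the reduction of surjectivity to the unramified Brauer group of $k(X)/k$, and the prime-to-$p$ case via Merkurjev's cycle-module theorem all agree with the framework the paper uses (and the ABBG source it cites). The genuine gap is your primary route for the $p$-primary part: $K\mapsto\bigoplus_q H^1_\et(K,W_n\Omega^q_{K,\log})$ is \emph{not} a Rost cycle module, and no amount of work on Kato residues or Gros--Suwa Gersten resolutions can make it one. The paper says this outright in the introduction: ``Neither $\Omega^i$ nor $\Br(-)[p^\infty]$ in characteristic $p>0$ constitute a cycle module.'' The obstruction is concrete. Homotopy invariance over $\A^1$ is a \emph{theorem} of Rost for cycle modules, a consequence of his axioms rather than an optional extra; in degree $0$ it would force $H^1(K,\Z/p^n)\cong H^1(K(t),\Z/p^n)$, since residues out of $M_0$ land in $M_{-1}=0$. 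But the Artin--Schreier--Witt class of $y^p-y=t$ (and its length-$n$ analogues) is a nontrivial element of $H^1(K(t),\Z/p^n)$ that is unramified at every closed point of $\A^1_K$ and does not descend to $K$. So some Rost axiom must fail; the ``hard part'' you isolate is not merely hard, it is false, and this failure is precisely what motivated \cite{ABBG} and the present paper. Your parenthetical alternative (decomposition of the diagonal acting on $H^1(-,W_n\Omega^\bullet_{\log})$, with alterations to handle the singular support) is in the spirit of the original \cite{ABBG} argument, but as written it stays at the level of a plan; the degree and support bookkeeping after an alteration is exactly where all the content lies.

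The paper's own route around this obstruction is different from both of yours and is its main novelty. Rather than forcing an $\A^1$-invariant structure, it observes that although $\A^1$-invariance fails, $\bP^1$-invariance over fields \emph{does} hold for $H^1(-,W_n\Omega^q_{\log})$ --- this is exactly Izhboldin's theorem, which you would have needed in any case --- and builds the argument inside the strictly larger class $\PI_\Nis$ of $\bP^1$-invariant Nisnevich sheaves with transfers (Definition~\ref{def:PI}). Proposition~\ref{prop:urcoh-logHW-PI} puts $H^1_\ur(-,W_n\Omega^q_{\log})$ in this class using Gros--Suwa and Izhboldin, and the replacement for Merkurjev's machinery in this weaker setting is the new moving lemma, Theorem~\ref{thm:moving}, comparing $h_0(X)_\Zar$ with its $\bP^1$-analogue $\ol h_0(X)_\Zar$ for $X$ smooth proper. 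Nothing in your proposal plays the role of this moving lemma, which is the step that actually closes the gap your attempt runs into.
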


Our main result extends Theorem \ref{thm:Auel et al} to more general invariants:


\begin{theorem}[see Corollary \ref{cor:Mer-Kah-gen}]\label{thm:intro}
Let $X$ be a smooth proper variety over a field $k$
which is universally $\CH_0$-trivial.
Then the structure morphism $X\to\Spec k$ induces an isomorphism $G(k)\xrightarrow{\simeq}G(X)$
for any $\P^1$-invariant Nisnevich sheaf with transfers $G$
in the sense of Definition \ref{def:PI}.
\end{theorem}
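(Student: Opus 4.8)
The plan is to run a Bloch--Srinivas-type decomposition-of-the-diagonal argument, adapted to $\P^1$-invariant sheaves with transfers. Write $p\colon X\to\Spec k$ for the structure map; the goal is that $p^{*}\colon G(k)\to G(X)$ be an isomorphism, and one direction is formal. Since $\CH_{0}(X)\to\Z$ is an isomorphism, $X$ carries a zero-cycle $z_{0}=\sum_{i}n_{i}[w_{i}]$ of degree $1$, and for any $G\in\NST$ the map $\langle z_{0},-\rangle\colon G(X)\to G(k)$, $g\mapsto\sum_{i}n_{i}\,\tr_{k(w_{i})/k}(w_{i}^{*}g)$, satisfies $\langle z_{0},p^{*}h\rangle=\deg(z_{0})\,h=h$ because $\tr_{k(w_{i})/k}\circ\operatorname{res}_{k(w_{i})/k}=[k(w_{i}):k]\cdot\id$. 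Thus $\langle z_{0},-\rangle$ is a retraction of $p^{*}$, so $p^{*}$ is injective and the whole problem is the surjectivity of $p^{*}$, i.e.\ the identity $g=p^{*}\langle z_{0},g\rangle$ for all $g\in G(X)$.

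The heart of the matter is a lemma which is exactly where $\P^{1}$-invariance is used: \emph{if $G\in\NST$ is $\P^{1}$-invariant and $Y$ is smooth and proper over a field $L$, then $z\mapsto z^{*}=\langle z,-\rangle$ descends from zero-cycles on $Y$ to $\CH_{0}(Y)$}. By additivity it suffices to treat one $\div_{C}(f)$ with $C\subseteq Y$ an integral curve and $f\in L(C)^{\times}$; passing to the normalization $\nu\colon\widetilde{C}\to C$ and letting $g\colon\widetilde{C}\to\P^{1}_{L}$ be the finite morphism attached to $\nu^{*}f$ (when $f$ is non-constant; otherwise there is nothing to prove), one has $\div_{C}(f)=j_{*}(g^{*}[0]-g^{*}[\infty])$ as a zero-cycle on $Y$, with $j\colon\widetilde{C}\to Y$ the induced finite map. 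Unwinding the composition of finite correspondences, the operator attached to $j_{*}g^{*}[0]$ is $i_{0}^{*}\circ g_{*}\circ j^{*}$ and the one attached to $j_{*}g^{*}[\infty]$ is $i_{\infty}^{*}\circ g_{*}\circ j^{*}$, where $g_{*}$ is the transfer along $g$ and $i_{0},i_{\infty}$ are the two rational sections of $\P^{1}_{L}$; these coincide because $\P^{1}$-invariance forces $i_{0}^{*}=i_{\infty}^{*}\colon G(\P^{1}_{L})\to G(L)$. This is the precise analogue, for $\P^{1}$-invariant sheaves, of the triviality of the $\CH_{0}$-action enjoyed by $\A^{1}$-invariant ones.

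To conclude, set $L=k(X)$ and let $\delta\in X_{L}(L)$ be the tautological (``diagonal'') $L$-point of $X_{L}:=X\otimes_{k}L$, so that $\langle\delta,\pr_{X}^{*}g\rangle=\delta^{*}\pr_{X}^{*}g=g_{\eta}\in G(L)$ is the restriction of $g$ to the generic point of $X$. Universal $\CH_{0}$-triviality, applied over $L$, gives $[\delta]=[z_{0}\otimes_{k}L]$ in $\CH_{0}(X_{L})$ --- equivalently, the correspondences $\Delta_{X}$ and $X\times z_{0}:=\sum_{i}n_{i}[X\times_{k}\{w_{i}\}]$ already coincide in $\CH_{0}$ of the generic fibre of $\pr_{1}\colon X\times X\to X$, which is the Bloch--Srinivas decomposition of the diagonal in the form needed here. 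By the lemma, $\langle\delta,-\rangle=\langle z_{0}\otimes_{k}L,-\rangle$ on $G(X_{L})$; and since transfers commute with the flat base change $k\hookrightarrow L$, one has $\langle z_{0}\otimes_{k}L,\pr_{X}^{*}g\rangle=\operatorname{res}_{L/k}\langle z_{0},g\rangle=(p^{*}\langle z_{0},g\rangle)_{\eta}$. Comparing the two, $(g-p^{*}\langle z_{0},g\rangle)_{\eta}=0$ in $G(k(X))$. Finally, by the semipurity of $G$ --- which for a $\P^{1}$-invariant Nisnevich sheaf with transfers is either established by the foundational results of the paper or built into Definition~\ref{def:PI}, and which yields $G(X)\hookrightarrow G(k(X))$ for $X$ smooth and connected ($X$ being connected since it is universally $\CH_{0}$-trivial) --- we deduce $g=p^{*}\langle z_{0},g\rangle$, which gives surjectivity and hence the theorem.

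The step carrying the real weight is the semipurity input used at the very end: a $\P^{1}$-invariant $G\in\NST$ must have no nonzero section supported on a proper closed subset. In the reciprocity setting this is a purity theorem of Saito, and it is exactly what powers the Binda--R\"ulling--Saito argument; in the present generality it has to be supplied by the theory of $\P^{1}$-invariant sheaves with transfers. A second, more clerical, obstacle is the bookkeeping with transfers --- their compatibility with flat base change (used above) and the identification of the operators attached to the cycles $X\times_{k}\{w_{i}\}$ and to $\delta$ in terms of $\tr$, $\operatorname{res}$ and restriction to points --- which is standard for Nisnevich sheaves with transfers but should be set up carefully first.
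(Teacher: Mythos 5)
Your argument runs the Bloch--Srinivas/Binda--R\"ulling--Saito ``decomposition of the diagonal'' scheme, which is a genuinely different route from the paper's: the paper instead compares the Suslin complex with a $\P^n$-variant and proves a new moving lemma (Theorem~\ref{thm:moving}), then reduces to the homotopy-invariant case via the right adjoint $h^0$. The first two steps of your proposal are fine: the retraction coming from a degree-one zero-cycle $z_0$ is standard, and the lemma that the action of $\Cor(\Spec L, Y)\to\Hom(G(Y),G(L))$ factors through $\CH_0(Y_L)$ for $Y$ smooth proper and $G$ $\P^1$-invariant is correct in substance (it is a pointwise, over-a-field shadow of what $\ol{h}_0(X)_\Zar\cong h_0(X)_\Zar$ does Zariski-locally), though you should note that over $L=k(X)$ the normalization $\widetilde C$ of a curve need not be smooth when $k$ is imperfect, so the factorization through a finite correspondence $\P^1_L\to\widetilde C$ requires either a perfection or a separate reduction step.

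The genuine gap is the final appeal to semipurity, $G(X)\hookrightarrow G(k(X))$ for $X$ smooth connected. You assert this is ``either established by the foundational results of the paper or built into Definition~\ref{def:PI},'' but neither is true: Definition~\ref{def:PI} imposes only $\P^1$-invariance and the Nisnevich sheaf property, which say nothing about sections supported on proper closed subsets, and the paper proves no such purity theorem. Semipurity is part of Voevodsky's theory for $\HI_\Nis$, and for reciprocity sheaves it is a deep theorem of Saito that powers BRS; but $\P^1$-invariance is strictly weaker than reciprocity (Remark~\ref{rem:compare}, via $\Z_\tr(T)$ for $T$ quasi-affine), and no analogue is available off the shelf. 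In fact in this paper $G(X)\hookrightarrow G(k(X))$ for $G\in\PI_\Nis$ and $X$ smooth proper is only obtainable \emph{as a consequence} of Theorem~\ref{thm:moving}: once $\ol{h}_0(X)_\Nis=h_0(X)_\Nis$, the adjunction chain gives $G(X)=h^0(G)(X)$ with $h^0(G)\in\HI_\Nis$, and injectivity into $k(X)$-sections comes from the homotopy-invariant theory. So the step you label ``carrying the real weight'' really is the entire content of the theorem; as written, the proposal presupposes exactly what the paper's Sections~4--5 are constructed to avoid having to assume.
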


Theorem \ref{thm:intro} generalizes Theorem \ref{thm:Auel et al}, since the Brauer group has a structure of a $\bP^1$-invariant Nisnevich sheaf with transfers 
(see Remark \ref{rem:intro} \eqref{item:br} below).
The conclusion of Theorem \ref{thm:intro} for \textit{homotopy invariant} sheaves with transfers follows from Merkurjev's result cited above. More recently, Binda et al.\ proved the same conclusion for another class of Nisnevich sheaves with transfers, called \textit{reciprocity sheaves} \cite[Theorem 10.12, Remark 10.13]{BRS}.
Our main theorem also covers their result,
since we have 
\begin{equation}\label{eq:HI-Rec-PI}
\text{homotopy invariant}
\Rightarrow 
\text{reciprocity}
\Rightarrow 
\text{$\P^1$-invariant}
\end{equation}
(see \cite[Theorems 3, 8]{rec}).
Both implications are strict (see Remark \ref{rem:compare}). 
Note also that $\Omega^i$ is a reciprocity sheaf by \cite[Theorem A.6.2]{rec},
hence Totaro's method can be explained
either by the results of Binda et al.\ or ours.
The main technical issue in the proof of Theorem \ref{thm:intro}
is the comparison of $G(X)$ and $h^0(G)(X)$,
where $h^0(G)$ is the \textit{maximal homotopy invariant subsheaf with transfers} of $G$.
We rephrase the problem in terms of algebraic cycles, 
and settle it  by establishing a new moving lemma (Theorem \ref{thm:moving}).

The unramified logarithmic Hodge-Witt cohomology
$H^1_\ur(-, W_n \Omega^i_{\log})$ (see \S \ref{sec:unram-deRW} for the definition)
satisfies the hypothesis of Theorem \ref{thm:intro}.
Although this fact can also be deduced from known results on reciprocity sheaves (see Remark \ref{rem:use-rec}), we will give a direct proof which depends on  classical results \cite{GS, Izhboldin} but not on reciprocity sheaves.


\begin{proposition}[see Proposition \ref{prop:urcoh-logHW-PI}]\label{prop:intro}
The unramified logarithmic Hodge-Witt cohomology
$H^1_\ur(-, W_n \Omega^i_{\log})$
is a $\P^1$-invariant Nisnevich sheaf with transfers
(over a field of positive characteristic)
for any integers $n \ge 1$ and $i \ge 0$.
\end{proposition}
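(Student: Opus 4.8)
The plan is to verify the three defining properties of a $\P^1$-invariant Nisnevich sheaf with transfers separately: (i) the transfer structure, (ii) the Nisnevich sheaf property, and (iii) $\P^1$-invariance, with the last being the crux. For the transfer structure on $H^1_\ur(-, W_n\Omega^i_{\log})$, the natural approach is to use Gros--Suwa's work \cite{GS}: the logarithmic de Rham--Witt sheaf $W_n\Omega^i_{\log}$ carries a Gysin/transfer formalism, and unramified cohomology $X \mapsto H^1_\ur(k(X), W_n\Omega^i_{\log})$ on smooth $X$ is identified with the Zariski (equivalently Nisnevich) cohomology $H^1_{\Nis}(X, W_n\Omega^i_{\log})$ via the Gersten resolution, which is known to hold for $W_n\Omega^i_{\log}$ by Gros--Suwa. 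Transfers along finite correspondences are then assembled from the classical pushforward maps for logarithmic Hodge-Witt cohomology together with the standard reduction to the case of finite morphisms and correspondences supported on graphs; one must check functoriality and compatibility with the product structure, which is bookkeeping with the Gysin formalism. The Nisnevich sheaf property is likewise a consequence of the Gersten resolution, which shows $H^1_{\Nis} = H^1_{\Zar}$ and that the assignment glues.

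For $\P^1$-invariance — i.e.\ that $H^1_\ur(X, W_n\Omega^i_{\log}) \to H^1_\ur(X \times \P^1, W_n\Omega^i_{\log})$ is an isomorphism for smooth $X$ — I would exploit the projective bundle formula for logarithmic Hodge-Witt cohomology. Concretely, there is a decomposition
\[
H^q_{\Nis}(X \times \P^1, W_n\Omega^i_{\log}) \cong H^q_{\Nis}(X, W_n\Omega^i_{\log}) \oplus H^{q-1}_{\Nis}(X, W_n\Omega^{i-1}_{\log}),
\]
where the second summand comes from cup product with the class of a hyperplane. Taking $q = 1$, the unramified group $H^1$ over $X \times \P^1$ splits as $H^1_\ur(X, W_n\Omega^i_{\log}) \oplus H^0_{\Nis}(X, W_n\Omega^{i-1}_{\log})$, so one must show the global-sections term $H^0_{\Nis}(X, W_n\Omega^{i-1}_{\log}) = \Gamma(X, W_n\Omega^{i-1}_{\log})$ does \emph{not} contribute to the unramified part, or more precisely that the composite
\[
H^1_\ur(X, W_n\Omega^i_{\log}) \to H^1_\ur(X\times\P^1, W_n\Omega^i_{\log}) \to H^1_\ur(X, W_n\Omega^i_{\log})
\]
(the second map being restriction to a rational point) is the identity and the extra summand dies under passing to the generic point of $X\times\P^1$. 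This is where Izhboldin's theorem \cite{Izhboldin} enters: it computes $W_n\Omega^i_{\log}$ of a rational function field, giving $W_n\Omega^i_{\log}(k(t)) = W_n\Omega^i_{\log}(k)$ (no new classes from the variable $t$), and this is exactly the statement needed to see that the hyperplane-class contribution has trivial image in $H^1_\ur$. Combining the projective bundle splitting with Izhboldin's computation at the generic point should force the restriction $H^1_\ur(X) \to H^1_\ur(X \times \P^1)$ to be an isomorphism.

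The main obstacle I anticipate is not any single deep input but the careful interface between the "cohomological" description $H^1_{\Nis}(X, W_n\Omega^i_{\log})$ and the "birational" description $H^1_\ur(k(X), W_n\Omega^i_{\log})$ — in particular, making the projective bundle formula interact correctly with the unramified (codimension-filtration) structure, and checking that the Gersten complex for $W_n\Omega^i_{\log}$ on $X\times\P^1$ decomposes compatibly with the $X$-Gersten complex under the hyperplane cup product. One must be careful that $H^1_\ur$ is $H^1$ of the global Gersten complex rather than naive kernel-of-a-residue, and that Izhboldin's result is applied to the \emph{correct} residue fields appearing in that complex. A secondary subtlety is verifying that the transfer structure constructed via Gros--Suwa is compatible with the $\P^1$-invariance isomorphism, i.e.\ that the isomorphism in Proposition \ref{prop:intro} is a map in $\NST$ and not merely of Nisnevich sheaves; this should follow from the projectivity of the bundle formula and functoriality of Gysin maps, but it needs to be stated. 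Once these compatibilities are pinned down, Proposition \ref{prop:intro} follows by assembling (i), (ii), and (iii).
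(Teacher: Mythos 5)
Your proposal identifies two of the paper's three key inputs — the Gros--Suwa Gersten resolution and Izhboldin's theorem — but the concrete arguments you build around them differ from the paper's and contain gaps at each stage.

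\textbf{Transfers.} You propose to assemble transfers from a Gysin formalism for $W_n\Omega^i_{\log}$ and acknowledge that the compatibility checks are "bookkeeping." The paper avoids this entirely: it invokes Bloch--Gabber--Kato to identify $W_n\Omega^i_{\log}(K)$ with $K^{\rm M}_i(K)/p^n$, so $W_n\Omega^i_{\log}$ inherits the cycle-module transfers and is a homotopy invariant presheaf with transfers; then a general theorem (\cite[6.21]{MVW}) gives the transfer structure on $H^j_\et(-,W_n\Omega^i_{\log})$, and \cite[13.1]{MVW} gives it on the Nisnevich sheafification $F^{1,i}_\ur$. Your Gysin route, even if it can be made to work, is substantially more labor-intensive and you do not cite a reference that actually carries it out.

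\textbf{Nisnevich sheaf property.} This is the most serious gap. You assert that the Nisnevich sheaf property "is likewise a consequence of the Gersten resolution." The Gersten resolution is by Zariski-local flasque sheaves; to upgrade to Nisnevich one needs to show that the distinguished Nisnevich squares glue, and the nontrivial input there is that for an étale map $f\colon Y\to X$ in $\Sm$ with $k(y)\cong k(x)$, the induced map on local residue pieces $G_x^{1,i}(X)\to G_y^{1,i}(Y)$ is a bijection (Proposition \ref{prop:totaro} in the paper). For $q=0$ this is part of the Gersten theorem (Shiho), but for $q=1$ and $n=1$ the paper needs Totaro's filtration theorem \cite[Theorem 4.4]{Totaro} on $H^1(\Frac R,\Omega^i_{\log})$ for a DVR $R$, and for $n>1$ an induction using the short exact sequence $0\to W_{n-1}\Omega^i_{\log}\to W_n\Omega^i_{\log}\to \Omega^i_{\log}\to 0$. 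Your proposal makes no mention of any of this, and without it the diagram chase verifying the Nisnevich excision sequence does not close.

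\textbf{$\P^1$-invariance.} You propose the projective bundle formula
$H^q(X\times\P^1,W_n\Omega^i_{\log})\cong H^q(X,W_n\Omega^i_{\log})\oplus H^{q-1}(X,W_n\Omega^{i-1}_{\log})$
as the engine, then try to argue the extra summand dies upon unramification, with Izhboldin somehow controlling the generic point. Two issues. First, you do not give a reference for this projective bundle formula in the precise logarithmic form and low degree you need; it is not one of the results cited in the paper, and its proof would itself be a nontrivial task. Second, and more importantly, the way you invoke Izhboldin is off: you say it "computes $W_n\Omega^i_{\log}$ of a rational function field, giving $W_n\Omega^i_{\log}(k(t))=W_n\Omega^i_{\log}(k)$," but Izhboldin's theorem (in the form used in the paper) concerns $H^1$ of the rational function field, and it is used to prove $F^{1,i}_\ur(\Spec K)\cong F^{1,i}_\ur(\P^1_K)$ directly, i.e.\ the field case of $\P^1$-invariance. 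The paper then promotes this to general $U\in\Sm$ via the abstract Lemma \ref{lem:p1-inv-field}: the cokernel $G$ of $\sigma^*\colon F\to \uHom(\Z_\tr(\P^1),F)$ is a direct summand hence a presheaf with transfers; $G$ vanishes on fields by Izhboldin; and $G$ injects into its value at the generic point by Gersten; hence $G=0$. This is a simpler and more robust argument than the projective bundle route, which forces you into precisely the delicate interface between the cohomological and birational pictures that you yourself flag as the main obstacle.

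In summary, your proposal is not the paper's proof and as written has genuine gaps — most notably the missing Totaro-type bijectivity needed for the Nisnevich sheaf property, and an unestablished projective bundle formula for the $\P^1$-invariance. The paper sidesteps both via Bloch--Gabber--Kato (for transfers) and via the abstract reduction to the field case (for $\P^1$-invariance).
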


As a corollary, we obtain a new proof of the following (known) result. 

\begin{theorem}\label{mainthm}
Let $X$ be a smooth proper variety over a field $k$
of characteristic $p>0$.
Assume that $X$ is universally $\CH_0$-trivial. 
Then the canonical map
\[ H^1_\ur(\Spec k, W_n \Omega^i_{\log})
\to H^1_\ur(X, W_n \Omega^i_{\log})
\]
is an isomorphism for any integers $n \ge 1$ and $i \ge 0$.
\end{theorem}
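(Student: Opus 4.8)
The plan is to derive Theorem \ref{mainthm} as a direct corollary of the two main inputs that precede it: Theorem \ref{thm:intro} (the general $\P^1$-invariance result) and Proposition \ref{prop:intro} (the fact that $H^1_\ur(-, W_n\Omega^i_{\log})$ is a $\P^1$-invariant Nisnevich sheaf with transfers). Concretely, set $G = H^1_\ur(-, W_n\Omega^i_{\log})$. By Proposition \ref{prop:intro}, $G$ is a $\P^1$-invariant Nisnevich sheaf with transfers in the sense of Definition \ref{def:PI}, for each fixed $n \ge 1$ and $i \ge 0$. Since $X$ is assumed to be smooth proper over $k$ and universally $\CH_0$-trivial, the hypotheses of Theorem \ref{thm:intro} are satisfied. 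Applying Theorem \ref{thm:intro} to this particular $G$ yields that the structure morphism $X \to \Spec k$ induces an isomorphism $G(k) \xrightarrow{\simeq} G(X)$, which is exactly the asserted isomorphism
\[
H^1_\ur(\Spec k, W_n \Omega^i_{\log}) \xrightarrow{\simeq} H^1_\ur(X, W_n \Omega^i_{\log}).
\]

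The only genuine content beyond invoking these two results is making sure the bookkeeping lines up: one must check that the ``canonical map'' referred to in the statement of Theorem \ref{mainthm} is indeed the map $G(\Spec k) \to G(X)$ induced by functoriality of the sheaf $G$ under the structure morphism, rather than some other comparison map. This is immediate once $H^1_\ur(-, W_n\Omega^i_{\log})$ has been set up as a presheaf (indeed Nisnevich sheaf) on $\Sm$ in \S\ref{sec:unram-deRW}: pullback along $X \to \Spec k$ gives precisely the canonical restriction map on unramified cohomology. One should also note that the characteristic hypothesis $p > 0$ is needed only to make the logarithmic Hodge–Witt sheaves $W_n\Omega^i_{\log}$ defined and nonzero, and it is already built into the statement of Proposition \ref{prop:intro}; no separate argument is required.

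I do not expect any real obstacle in this final step — all the work has been front-loaded into Theorem \ref{thm:intro} (whose proof rests on the moving lemma, Theorem \ref{thm:moving}) and into Proposition \ref{prop:intro} (whose proof uses the classical results of \cite{GS, Izhboldin}). If anything, the subtle point worth a sentence of remark is that Theorem \ref{mainthm} is stated for a single fixed pair $(n,i)$, and the argument applies uniformly for all such pairs since Proposition \ref{prop:intro} establishes the $\P^1$-invariance for arbitrary $n \ge 1$, $i \ge 0$; thus the conclusion holds for every $n$ and $i$ simultaneously. I would therefore present the proof as a short paragraph: recall that $G := H^1_\ur(-, W_n\Omega^i_{\log})$ is a $\P^1$-invariant Nisnevich sheaf with transfers by Proposition \ref{prop:intro}, observe that $X$ satisfies the hypothesis of Theorem \ref{thm:intro}, and conclude that $G(k) \to G(X)$ is an isomorphism, which is the claim.
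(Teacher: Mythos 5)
Your proposal is correct and matches the paper's intended proof exactly: Theorem \ref{mainthm} is obtained by applying Corollary \ref{cor:Mer-Kah-gen} (equivalently Theorem \ref{thm:intro}) to $G = H^1_\ur(-, W_n\Omega^i_{\log})$, whose membership in $\PI_\Nis$ is supplied by Proposition \ref{prop:urcoh-logHW-PI}. Nothing is missing.
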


\begin{remark}\label{rem:intro}
\begin{enumerate}
\item\label{item:br}
Since $H^1_{\ur}(X,W_n\Omega^1_{\log})\simeq\Br(X)[p^n]$,
Theorem \ref{mainthm} for $i=1$ follows from Theorem \ref{thm:Auel et al}.
For general $i$, Theorem \ref{mainthm} was 
posed as a problem by Auel et al. in \cite[Problem 1.2]{ABBG}
and previously proved in \cite{BRS} and \cite{O}, as explained below.
\item 
Theorem \ref{mainthm} was shown by Binda et al.\ in \cite[Theorem 10.12, Remark 10.13]{BRS}
as a consequence of their general result on reciprocity sheaves mentioned above,
along with the fact that $H^1_{\ur}(-,W_n\Omega^i_{\log})$ has a structure of reciprocity sheaf.
\item 
Independently of \cite{BRS}, almost at the same time, Otabe also obtained Theorem \ref{mainthm} for $n=1$ (see \cite[Theorem 1.2]{O}). His proof is somewhat close to ours, but it is more cycle module theoretical. A \textit{tame subgroup} of the unramified cohomology was used in place of $h^0(G)$ in the present paper. The relation between these two subgroups is left for future research.
%
%
\item 
A similar statement as Theorem \ref{mainthm} holds 
when  $H^1_\ur$ is replaced by $H^j_\ur$ for any $j \in \Z$.
Indeed, 
the cohomology groups in question are trivial unless $j = 0, 1$,
because
the natural map $H^j_\ur(X, W_n \Omega^i_{\log})
\to H^j_\ur(k(X), W_n \Omega^i_{\log})$
is injective (if $X$ is connected),
and the $p$-cohomology dimension of 
any field of characteristic $p>0$ is at most one.
The case $j=0$ follows from
the results of Bloch--Gabber--Kato \cite[Theorem 2.1]{BK86}
and Merkurjev \cite[Theorem 2.11]{M}. 
\end{enumerate}
\end{remark}

The organization of the present paper is as follows. 
In \S\ref{sec:HI}, we revisit the proof of Theorem \ref{thm:intro} for homotopy invariant Nisnevich sheaves with transfers  due to Merkurjev \cite{M} and Kahn \cite{K} (see Corollary \ref{cor:Mer-Kah}). 
In \S\ref{sec:PI}, we state the main result in a slightly more general form (see Theorem \ref{prop:reduction-moving}) and prove it while admitting the key moving lemma (Theorem \ref{thm:moving}). The proof of Theorem \ref{thm:moving} occupies the next two sections.

In \S\ref{sec:simplicial}, we rephrase the problem in terms of algebraic cycles. To do so, we consider the Suslin complex $C_{\bullet}(X)$ and its variant $\overline{C}_{\bullet}(X)$, where the latter is defined by replacing $\A^n$ with $\P^n$ in the former. Theorem \ref{thm:moving} is then reduced to a comparison, up to Zariski sheafification, of their $0$-th homology presheaves (see Theorem \ref{thm:moving2}).
Its proof is given in  \S\ref{sec:moving lemma},
which is pivotal in our work.
%
%

In \S\ref{sec:unram-deRW}, we give a proof of Proposition \ref{prop:intro}. 
Finally, Appendix \S \ref{sec:app} provides a proof of basic properties of 
universally $H_0^S$-trivial correspondences (see Definition \ref{def:univH0Striv}).



We close this introduction with a brief discussion on related works.
Shimizu \cite{Shimizu} and Koizumi \cite{Koizumi}
obtained some results resembling our moving lemma Theorem \ref{thm:moving}
in the $\A^1$-homotopy theory.
Ayoub \cite{Ay} considered the notion of $\P^1$-localisation
which is much more sophisticated than 
our $\P^n$-Suslin complex introduced in \S \ref{sec:simplicial}.
The relation of their works with ours is to be explored.
Bruno Kahn pointed out that 
Theorem \ref{thm:moving} has implications in the theory of 
birational sheaves \cite{KS},
which should be an interesting topic for future research
(see a brief comment in Remark \ref{rem:bruno}).


\begin{ack}
The second author would like to thank Tomoyuki Abe for fruitful discussions and suggestions, which brought him into Voevodsky's theory of motives. Without that, this joint project would have never started. 
The authors thank Bruno Kahn for his intriguing comments on birational sheaves.
\end{ack}

\section{Reminders on homotopy invariant sheaves with transfers}\label{sec:HI}

We fix a field $k$.
Let $\Sch$ be the category of separated $k$-schemes of finite type,
and $\Sm$ its full subcategory of smooth $k$-schemes.
We write $\Fld_k$ for the category of fields over $k$,
and $\Fld_k^\gm$ for its full subcategory consisting of 
the $k$-fields which are isomorphic to the function field
of some (irreducible) $U \in \Sm$.
For $K \in \Fld_k$ and $X \in \Sch$, 
we write $X_K := X \otimes_k K$.

Let $\Cor$ be Voevodsky's category of finite correspondences.
By definition it has the same objects as $\Sm$,
and for $U, V \in \Sm$ 
the space of morphisms 
$\Cor(U, X)$ from $U$ to $V$ 
is the free abelian group
on the set of integral closed subschemes of $U \times X$
which is finite and surjective over an irreducible component of $U$.
An additive functor $F : \Cor^\op \to \Ab$
is called a presheaf with transfers.
Denote by $\PST$ the category of presheaves with transfers.
If $S$ is a $k$-scheme that is written as a filtered limit $S=\lim_i S_i$ 
where $S_i \in \Sm$ and all transition maps are open immersions,
then we define  
\begin{equation}\label{eq:colim-F-ext}
F(S) = \colim_i F(S_i)  \qquad (F \in \PST).
\end{equation}
We abbreviate $F(R)=F(\Spec R)$ for a $k$-algebra $R$
(when $F(\Spec R)$ is defined).
In particular, 
we may speak of $F(K)$ for $K \in \Fld_k^\gm$
and $F(\sO_{X, x})$ for $x \in X \in \Sm$.
We set $\Z_\tr(X):=\Cor(-, X) \in \PST$ for $X \in \Sm$.

We say $F \in \PST$ is homotopy invariant
if the projection $\pr : X \times \A^1 \to X$
induces an isomorphism
$\pr^* : F(X) \cong F(X \times \A^1)$ for any $X \in \Sm$.
We write $\HI$ for the full subcategory of $\PST$ of homotopy invariant
presheaves with transfers.
We say $F \in \PST$ is a Nisnevich sheaf with transfers
if $F$ composed with the inclusion (graph) functor
$\Sm \to \Cor$ is a Nisnevich sheaf on $\Sm$.
We write $\NST$ for the full subcategory of $\PST$ of 
Nisnevich sheaves with transfers.
Set $\HI_\Nis:=\HI \cap \NST$.

We will used the following facts:
\begin{enumerate}[(V1)]
\item \label{V1}
The inclusion functor $\NST \hookrightarrow \PST$
admits a left adjoint $a_\Nis : \PST \to \NST$
and $a_\Nis(\HI) \subset \HI_\Nis$ holds
\cite[Corollary 11.2, Theorem 22.3]{MVW}.
We write $F_\Nis := a_\Nis(F)$.
We have $F_\Nis(K)=F(K)$ for any $K \in \Fld_k^\gm$
(because fields are Henselian local).
\item \label{V2} 
The inclusion functor $\HI \hookrightarrow \PST$
has a left adjoint $h_0$
given by the formula 
\[
h_0(F)(U)=\Coker(F(U \times \A^1) \to F(U))
\]
for $F \in \PST, ~U \in \Sm$.
This is the maximal homotopy invariant quotient of $F$
\cite[Example 2.20]{MVW}.
For $X \in \Sm$,we write $h_0(X):=h_0(\Z_\tr(X))$.
We call
\[ H_0^S(X_K):=h_0(X)(K) = h_0(X)_\Nis(K) \]
the $0$-th \emph{Suslin homology} of $X_K$
for $K \in \Fld_k^\gm$.
There is a canonical surjective map
$H_0^S(X_K) \to \CH_0(X_K)$,
which is isomorphic if $X$ is proper over $k$
\cite[Exercise 2.21]{MVW}.
\item \label{V21} 
The inclusion functor $\HI \hookrightarrow \PST$
has a right adjoint $h^0$,
given by the formula 
\[
 h^0(F)(U)=\PST(h_0(U), F)
\]
for $F \in \PST, ~U \in \Sm$.
This is the maximal homotopy invariant subobject of $F$ \cite[\S 4.34]{RS}.
\item \label{V3} 
Let $f : F \to G$ be a morphism in $\HI_\Nis$.
If $f$ induces an isomorphism 
$f^* : F(K) \cong G(K)$ for any $K \in \Fld^\gm_k$,
then $f$ is an isomorphism in $\HI_\Nis$
\cite[Corollary 11.2]{MVW}.
\item \label{V4} 
Given $F \in \PST$, we denote by $F_\Zar$ 
the Zariski sheaf associated to the presheaf on $\Sm$
obtained by restricting $F$ along the graph functor $\Sm \to \Cor$.
In general, it does not admit a structure of presheaf with transfers,
but if $F \in \HI$
then we have $F_\Zar=F_\Nis$ by \cite[Theorem 22.2]{MVW},
and hence $F_\Zar$ acquires transfers by (V\ref{V1}).
We say $F \in \PST$ is a Zariski sheaf with transfers if $F=F_\Zar$.
\end{enumerate}

Another important fact can be stated 
as
$H^i_\Zar(-, F_\Zar)=H^i_\Nis(-, F_\Nis) \in \HI$ 
for $F \in \HI$, assuming $k$ is perfect
\cite[Theorems 5.6, 5.7]{V}.
We will not use this in the sequel.

\begin{definition}\label{def:univH0Striv}
Let $X, Y \in \Sm$.
We say $f \in \Cor(X, Y)$ is
\emph{universally $H_0^S$-trivial}
if the induced map
$f_{K*} : H_0^S(X_K) \to H_0^S(Y_K)$ 
is an isomorphism for each $K \in \Fld^\gm_k$.
\end{definition}

\begin{remark}\label{rem:univtriv-CH0-H0S}
This is an analogue of \cite[D\'efinition 1.1]{CTP},
where a proper morphism $f : X \to Y$
is said to be \emph{universally $\CH_0$-trivial}
if the induced map $f_{K*} : \CH_0(X_K) \to \CH_0(Y_K)$ 
is an isomorphism for each $K \in \Fld_k$.
When $X$ and $Y$ are proper over $k$,
a universally $\CH_0$-trivial morphism is 
also universally $H_0^S$-trivial by (V\ref{V2}).
(We tacitly identify a morphism with its graph.)
Note also that a smooth proper variety $X$ is universally $\CH_0$-trivial
(in the sense of Theorem \ref{mainthm})
if and only if the structure map $X \to \Spec k$ is universally $\CH_0$-trivial.
\end{remark}

The following result is due to 
Merkurjev \cite[Theorem 2.11]{M}
and Kahn \cite[Corollary 4.7]{K}.
We include a short proof here to keep self-containedness.

\begin{prop}\label{lem:univtriv}
Let $X, Y \in \Sm$.
The following conditions are equivalent
for $f \in \Cor(X, Y)$:
\begin{enumerate}
\item 
The finite correspondence $f$ is universally $H_0^S$-trivial.
\item
The induced map
$f_* : h_0(X)_\Nis \to h_0(Y)_\Nis$ 
is an isomorphism in $\HI_\Nis$.
\item
The induced map
$f^* : F(Y) \to F(X)$ is an isomorphism 
for each $F \in \HI_\Nis$.
\end{enumerate}
\end{prop}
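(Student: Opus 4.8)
The plan is to prove the cycle $(1)\Rightarrow(2)\Rightarrow(3)\Rightarrow(1)$, which splits the work between an abstract-nonsense half and a genuinely content-carrying half.

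\medskip
\noindent\textbf{$(2)\Rightarrow(3)$.} This is the formal direction. Given $F\in\HI_\Nis$, apply the functor $\PST(-,F)$; by the adjunctions recorded in (V\ref{V2}) and (V\ref{V1}) we have $\PST(h_0(X)_\Nis,F)=\PST(h_0(X),F)=\HI(h_0(X),F)=F(X)$, using that $F$ is a Nisnevich sheaf and homotopy invariant, and the Yoneda-type identity $\PST(h_0(X),F)=\PST(\Z_\tr(X),F)=F(X)$ since $F\in\HI$. Naturality in $X$ turns the isomorphism $f_*$ of (2) into the isomorphism $f^*\colon F(Y)\to F(X)$ of (3).

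\medskip
\noindent\textbf{$(3)\Rightarrow(1)$.} Take $F=h_0(X)_\Nis$ and $F=h_0(Y)_\Nis$; these lie in $\HI_\Nis$ by (V\ref{V1}). Specializing condition (3) to these sheaves and chasing the identity map through, one finds that $f_*\colon h_0(X)_\Nis\to h_0(Y)_\Nis$ is a split mono and split epi, hence an isomorphism in $\HI_\Nis$ --- or more directly, evaluate (3) on all $F\in\HI_\Nis$ and use the Yoneda lemma in $\HI_\Nis$ to conclude $f_*$ is an isomorphism, then evaluate at each $K\in\Fld^\gm_k$ to get that $f_{K*}\colon h_0(X)(K)\to h_0(Y)(K)$ is an isomorphism; by definition $h_0(X)(K)=H_0^S(X_K)$, which is exactly (1). (Using (V\ref{V1}), $h_0(X)_\Nis(K)=h_0(X)(K)$.)

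\medskip
\noindent\textbf{$(1)\Rightarrow(2)$.} This is the substantive step. By (V\ref{V3}), a morphism in $\HI_\Nis$ is an isomorphism as soon as it is an isomorphism on all $K\in\Fld^\gm_k$; so it suffices to show $f_*\colon h_0(X)_\Nis(K)\to h_0(Y)_\Nis(K)$ is an isomorphism for every such $K$. Again by (V\ref{V1}) this is the map $h_0(X)(K)\to h_0(Y)(K)$, i.e.\ $H_0^S(X_K)\to H_0^S(Y_K)$, which is an isomorphism precisely by hypothesis (1). The only point needing care is that the base change from $k$ to $K$ is compatible with all the functors involved and with the formula for $h_0$ in (V\ref{V2}) --- i.e.\ that $h_0(X)(K)$ computed via the colimit convention \eqref{eq:colim-F-ext} agrees with the $0$-th Suslin homology of $X_K$ --- which is built into the definition given there.

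\medskip
The main obstacle is really just bookkeeping: making sure that ``isomorphism for each $K\in\Fld^\gm_k$'' can be promoted to ``isomorphism in $\HI_\Nis$'' via (V\ref{V3}) (which requires $k$-fields that are function fields of smooth $k$-varieties, hence the restriction to $\Fld^\gm_k$ throughout), and that the Yoneda argument in $(3)\Rightarrow(1)$ correctly uses that $h_0(-)_\Nis$ represents evaluation on $\HI_\Nis$. No moving lemma or geometric input is needed here; the proposition is a clean formal consequence of the four facts (V\ref{V1})--(V\ref{V3}) together with the representability of $F\mapsto F(X)$ on $\HI_\Nis$.
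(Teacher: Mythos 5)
Your proof is correct and follows essentially the same route as the paper: $(1)\Leftrightarrow(2)$ via the Nisnevich-local detection principle (V\ref{V3}) applied to the morphism $f_*$ in $\HI_\Nis$, and $(2)\Leftrightarrow(3)$ via the chain of adjunctions $F(X)=\PST(\Z_\tr(X),F)=\HI(h_0(X),F)=\HI_\Nis(h_0(X)_\Nis,F)$ together with Yoneda. The only cosmetic difference is that you organize the implications as a cycle $(1)\Rightarrow(2)\Rightarrow(3)\Rightarrow(1)$ rather than as two pairwise equivalences, and you offer a split mono/epi reading of the final Yoneda step; both are unobjectionable.
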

\begin{proof}
The equivalence of (1) and (2)
is a consequence of (V\ref{V3}) above.
We have
\begin{align*}
F(X) 
= \PST(\Z_\tr(X), F)
= \HI(h_0(X), F)
= \HI_\Nis(h_0(X)_\Nis, F)
\end{align*}
for any $F \in \HI_\Nis$.
Here we used, in order, 
Yoneda's lemma, (V\ref{V2}), and (V\ref{V1}).
Now another use of Yoneda's lemma shows the equivalence of (2) and (3).
\end{proof}

\begin{cor}\label{cor:Mer-Kah}
Let $X$ be a smooth and proper scheme over a field $k$.
If $X$ is universally $\CH_0$-trivial,
then we have $F(k) \cong F(X)$ for any $F \in \HI_\Nis$.
\end{cor}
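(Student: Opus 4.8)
The plan is to deduce Corollary \ref{cor:Mer-Kah} directly from Proposition \ref{lem:univtriv} by observing that the hypothesis supplies exactly the input required by that proposition. Concretely, let $a\colon X\to\Spec k$ be the structure morphism, which we identify with its graph in $\Cor(X,\Spec k)$. The claim is that $a$ is universally $H_0^S$-trivial in the sense of Definition \ref{def:univH0Striv}; granting this, condition (3) of Proposition \ref{lem:univtriv} gives $F(\Spec k)=F((\Spec k))\xrightarrow{\sim}F(X)$ for every $F\in\HI_\Nis$, which is the assertion.

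So the substance is to check that $a_{K*}\colon H_0^S(X_K)\to H_0^S((\Spec k)_K)=H_0^S(\Spec K)$ is an isomorphism for each $K\in\Fld_k^\gm$. First I would note $H_0^S(\Spec K)=\Z$: indeed $h_0(\Spec k)=h_0(\Z_\tr(\Spec k))$ and $\Z_\tr(\Spec k)=\Z$ is already homotopy invariant, so $H_0^S(\Spec K)=\Z(K)=\Z$. Next, since $X$ is proper over $k$, the base change $X_K$ is proper over $K$, and $X_K$ is still smooth; by the fact recorded in (V\ref{V2}) the canonical surjection $H_0^S(X_K)\to\CH_0(X_K)$ is an \emph{isomorphism} for proper $X$. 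Finally, the hypothesis that $X$ is universally $\CH_0$-trivial says precisely that the degree map $\CH_0(X_K)\to\Z$ is an isomorphism for every field extension $K/k$, in particular for $K\in\Fld_k^\gm$. Chasing through the identifications, the composite
\[
H_0^S(X_K)\xrightarrow{\ \sim\ }\CH_0(X_K)\xrightarrow{\ \deg,\ \sim\ }\Z=H_0^S(\Spec K)
\]
agrees with $a_{K*}$ (both are induced by the structure map and are compatible with the degree/pushforward), so $a_{K*}$ is an isomorphism. Hence $a$ is universally $H_0^S$-trivial and Proposition \ref{lem:univtriv} applies.

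I do not expect any genuine obstacle here; the corollary is essentially a repackaging. The only point that deserves a line of care is the identification of $a_{K*}$ with the degree map under the isomorphism $H_0^S(X_K)\cong\CH_0(X_K)$ — i.e.\ that the Suslin pushforward along $X\to\Spec k$ corresponds, via the comparison maps of (V\ref{V2}), to $\deg\colon\CH_0(X_K)\to\CH_0(\Spec K)=\Z$. This is a standard compatibility (the zero-cycle $[x]$ on a closed point $x\in X_K$ maps to $[k(x):K]\cdot[\Spec K]$ on both sides), and I would simply cite it or dispatch it in one sentence. The remaining ingredients — $\Z_\tr(\Spec k)$ being homotopy invariant, and the properness hypothesis being exactly what is needed for $H_0^S\cong\CH_0$ — are already stated in (V\ref{V2}), so nothing further is required.
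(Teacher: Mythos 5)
Your proof is correct and takes essentially the same route as the paper: the paper's one-line proof cites Remark \ref{rem:univtriv-CH0-H0S}, which contains precisely the reduction you spell out (identifying the structure map with its graph, using properness and (V\ref{V2}) to pass from $\CH_0$-triviality to $H_0^S$-triviality, then invoking Proposition \ref{lem:univtriv}). You have simply unpacked the cited remark in full detail.
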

\begin{proof}
In view of Remark \ref{rem:univtriv-CH0-H0S},
this is a special case of Proposition \ref{lem:univtriv}.
\end{proof}

We will generalize this result in 
Corollary \ref{cor:Mer-Kah-gen} below.

\begin{remark}\label{prop:blowup}
We collect basic properties of universally $H_0^S$-trivial correspondences.
Since they are not used in the sequel,
the proof will be given in Appendix \S \ref{sec:app}.
%
\begin{enumerate}
\item
If $f, g$ are composable finite correspondences
and if two out of $f, g , f \circ g$ are universally $H_0^S$-trivial,
then so is the third.
\item
If $f : X \to Y$ and $f' : X' \to Y'$ are
universally $H_0^S$-trivial finite correspondences,
then so is $f \times f' : X \times X' \to Y \times Y'$.
\item
Suppose $k$ is perfect.
Let $j : U \hookrightarrow X$ be an open dense immersion in $\Sm$.
If $X \setminus j(U)$ is of codimension $\ge 2$, 
then $j$ is universally $H_0^S$-trivial.
\item 
Suppose $k$ is perfect.
A proper birational morphism in $\Sm$
is universally $H_0^S$-trivial.
\end{enumerate}
\end{remark}

\section{$\P^1$-invariance and the main result}\label{sec:PI}

Recall from \cite[\S 3.2]{voetri}
that $\PST$ is equipped with a
symmetric monoidal structure $\otimes$
which is uniquely characterized by the facts 
that it is right exact and that the Yoneda functor is monoidal
(that is, $\Z_\tr(X) \otimes \Z_\tr(Y)=\Z_\tr(X \times Y)$
for $X, Y \in \Sm$).
It admits a right adjoint $\uHom$ given by the formula
\begin{equation}\label{eq:def-uHom}
\uHom(F, G)(X) = \PST(F \otimes \Z_\tr(X), G)
\qquad (F, G \in \PST, ~X \in \Sm).
\end{equation}

\begin{definition}\label{def:PI}
We say $G \in \PST$ is \emph{$\P^1$-invariant}
if the structure map $\sigma : \P^1 \to \Spec k$
induces an isomorphism 
$G \overset{\cong}{\longrightarrow} \uHom(\Z_\tr(\P^1), G)$,
that is,
$\sigma$ induces isomorphisms 
$G(U) \overset{\cong}{\longrightarrow} G(U \times \P^1)$
for all $U \in \Sm$.
Denote by $\PI_\Nis$ the full subcategory of $\PST$
consisting of all Nisnevich sheaves with transfers
which are $\P^1$-invariant.
\end{definition}

\begin{thm}\label{prop:reduction-moving}
Suppose that $X, Y$ are 
smooth and proper schemes over a field $k$
and $f \in \Cor(X, Y)$.
Then the conditions in Proposition \ref{lem:univtriv}
are equivalent to the following:
\begin{enumerate}
\item[(4)]
The induced map
$f^* : G(Y) \to G(X)$ is an isomorphism 
for each $G \in \PI_\Nis$.
\end{enumerate}
\end{thm}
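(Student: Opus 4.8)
The plan is to show $(3) \Leftrightarrow (4)$, since $(3)$ (the statement for $\HI_\Nis$) is already known to be equivalent to $(1)$ and $(2)$ by Proposition \ref{lem:univtriv}. The implication $(4) \Rightarrow (3)$ is immediate: a homotopy invariant sheaf with transfers is in particular $\P^1$-invariant (since $\P^1$-invariance follows from $\A^1$-invariance via the decomposition $\Z_\tr(\P^1) \cong \Z \oplus \Z(1)[2]$, or more directly because $\A^1$-invariance forces $G(U) \cong G(U \times \A^1) \cong G(U \times \P^1)$ using the open cover of $\P^1$ and the Nisnevich sheaf condition). So $\HI_\Nis \subset \PI_\Nis$, and restricting the hypothesis in $(4)$ to this subcategory gives $(3)$.

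The substance is $(3) \Rightarrow (4)$. Fix $G \in \PI_\Nis$. The key is to relate $G(X)$ to $h^0(G)(X)$, where $h^0(G) \in \HI_\Nis$ is the maximal homotopy invariant subsheaf (V\ref{V21}), and similarly for $Y$; since $f^*$ is compatible with the inclusion $h^0(G) \hookrightarrow G$ and $f^* : h^0(G)(Y) \to h^0(G)(X)$ is an isomorphism by $(3)$, it suffices to prove that for $X$ smooth \emph{proper} the natural map $h^0(G)(X) \to G(X)$ is an isomorphism. Here is where the paper's strategy enters: one rephrases $G(X) = \PST(\Z_\tr(X), G)$ and $h^0(G)(X) = \HI(h_0(X), G)$ via Yoneda, so the comparison becomes a statement about morphisms out of $\Z_\tr(X)$ versus $h_0(X)$, which in turn is controlled by the $0$-th homology presheaves of the Suslin complex $C_\bullet(X)$ and its projective variant $\ol{C}_\bullet(X)$. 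The point of the moving lemma (Theorem \ref{thm:moving}, to be invoked as a black box here) is exactly that, up to Zariski sheafification, the $0$-th homology of $\ol{C}_\bullet(X)$ agrees with $h_0(X)_\Zar$ when $X$ is smooth proper; the $\P^1$-invariance of $G$ is what allows one to test against $\ol{C}_\bullet(X)$ (built from $\P^n$) rather than $C_\bullet(X)$ (built from $\A^n$), because each $\P^n$-face map is killed upon applying $\uHom(-, G)$ in the same way an $\A^n$-face map would be for a homotopy invariant target.

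Concretely, I would: (i) record $\HI_\Nis \subset \PI_\Nis$ to dispose of $(4) \Rightarrow (3)$; (ii) reduce $(3) \Rightarrow (4)$ to showing $h^0(G)(X) \xrightarrow{\cong} G(X)$ for every smooth proper $X$ and every $G \in \PI_\Nis$, using the functoriality of $h^0(G) \hookrightarrow G$ in $X$ and a two-out-of-three / commutative square argument with the isomorphism $f^* : h^0(G)(Y) \cong h^0(G)(X)$ coming from $(3)$; (iii) identify $G(X)$ with $\varinjlim_n \PST(\Z_\tr(\P^n)^{\otimes \bullet}\text{-type complex}, G)$ and $h^0(G)(X)$ with the analogous limit over the $\A^n$-Suslin complex, so that the desired isomorphism follows from the moving lemma comparing the two $0$-th homology presheaves after Zariski sheafification, together with the fact that $G$, being a Nisnevich (hence Zariski) sheaf, sees only the sheafified homology. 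The main obstacle is step (iii): making precise the passage from $\P^1$-invariance of $G$ to the assertion that $G$ "computes the same thing" on the $\P^n$-Suslin complex as on the $\A^n$-Suslin complex, and then feeding in Theorem \ref{thm:moving} in exactly the form the paper has set up. Everything else is bookkeeping with adjunctions (V\ref{V1})--(V\ref{V21}) and Yoneda; the real content has been quarantined into the moving lemma, whose proof the paper defers to \S\ref{sec:simplicial}--\S\ref{sec:moving lemma}.
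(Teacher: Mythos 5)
Your overall strategy matches the paper's: dispose of $(4)\Rightarrow(3)$ by $\HI_\Nis\subset\PI_\Nis$, then reduce $(3)\Rightarrow(4)$ to showing $h^0(G)(X)\xrightarrow{\cong}G(X)$ for smooth proper $X$, with the moving lemma Theorem~\ref{thm:moving} quarantining the geometric content. Two points need tightening. First, to invoke $(3)$ on $F:=h^0(G)$ you need $F\in\HI_\Nis$, but the fact (V\ref{V21}) you cite only puts $h^0(G)$ in $\HI$: that $h^0(G)$ is a Nisnevich sheaf when $G$ is, is a separate lemma (the paper's Lemma~\ref{lem:sheaf}, using that sheafification preserves the inclusion $h^0(G)\subset G$ and that $h^0(G)_\Zar=h^0(G)_\Nis\in\HI$ is then forced to coincide with $h^0(G)$ by maximality). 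Second, your step (iii) describes a limit over simplicial degrees, but the actual mechanism lives entirely in degree zero: $\P^1$-invariance gives $\P^1$-rigidity (Lemma~\ref{lem:PI-PRig}), and then Lemma~\ref{lem:P1rigid}(d) says every map $\Z_\tr(X)\to G$ factors through $\ol{h}_0(X)$; since $G$ is a Nisnevich sheaf this factors further through $\ol{h}_0(X)_\Nis$, Theorem~\ref{thm:moving} replaces this by $h_0(X)_\Nis$, and the two adjunctions for $h_0$ and $h^0$ return you to $F(X)$. So the "kill the $\P^1$-face maps" intuition is right, but the bookkeeping is a one-step Yoneda/adjunction chain through $\ol{h}_0(X)$, not a colimit over $\Z_\tr(\P^n)$-complexes. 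With these two fixes your proposal is the paper's proof.
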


As with Corollary \ref{cor:Mer-Kah},
Theorem \ref{prop:reduction-moving} has an immediate consequence:

\begin{cor}\label{cor:Mer-Kah-gen}
Let $X$ be a smooth proper scheme over a field $k$.
If $X$ is universally $\CH_0$-trivial,
then we have $G(k) \cong G(X)$
for any $G \in \PI_\Nis$.
\end{cor}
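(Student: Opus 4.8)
The plan is to deduce Corollary \ref{cor:Mer-Kah-gen} from Theorem \ref{prop:reduction-moving} by the same formal argument that produced Corollary \ref{cor:Mer-Kah} from Proposition \ref{lem:univtriv}. First I would recall (Remark \ref{rem:univtriv-CH0-H0S}) that a smooth proper variety $X$ is universally $\CH_0$-trivial in the sense of Theorem \ref{mainthm} precisely when the structure morphism $X \to \Spec k$ is universally $\CH_0$-trivial, and that for proper schemes over $k$ a universally $\CH_0$-trivial morphism is universally $H_0^S$-trivial by (V\ref{V2}). Thus the structure correspondence $f : X \to \Spec k$ (identified with its graph) satisfies condition (1) of Proposition \ref{lem:univtriv}.

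Next I would invoke Theorem \ref{prop:reduction-moving}: since $X$ and $\Spec k$ are both smooth and proper over $k$, condition (1) is equivalent to condition (4), namely that $f^* : G(\Spec k) \to G(X)$ is an isomorphism for every $G \in \PI_\Nis$. Writing $G(k) = G(\Spec k)$ as in the excerpt's conventions, this is exactly the assertion $G(k) \cong G(X)$ to be proved.

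There is essentially no obstacle here, as this is a purely formal specialization of the main theorem; the only point requiring a word of care is the identification of the abstract notion of "universally $H_0^S$-trivial correspondence" with the geometric hypothesis "$X$ universally $\CH_0$-trivial", which is handled by Remark \ref{rem:univtriv-CH0-H0S} together with the surjection $H_0^S(X_K) \to \CH_0(X_K)$ from (V\ref{V2}) being an isomorphism for proper $X$. So the proof is just two sentences chaining these facts together.

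\begin{proof}
By Remark \ref{rem:univtriv-CH0-H0S}, the hypothesis that $X$ is universally $\CH_0$-trivial means that the structure morphism $X \to \Spec k$ is universally $\CH_0$-trivial, and since $X$ is proper over $k$ this morphism (identified with its graph) is universally $H_0^S$-trivial by (V\ref{V2}); that is, it satisfies condition (1) of Proposition \ref{lem:univtriv}. As $X$ and $\Spec k$ are smooth and proper over $k$, Theorem \ref{prop:reduction-moving} shows this is equivalent to condition (4), so $G(k) \to G(X)$ is an isomorphism for every $G \in \PI_\Nis$.
\end{proof}
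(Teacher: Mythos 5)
Your proof is correct and follows exactly the same route the paper intends: the paper states the corollary as an immediate consequence of Theorem \ref{prop:reduction-moving}, mirroring how Corollary \ref{cor:Mer-Kah} is deduced from Proposition \ref{lem:univtriv} via Remark \ref{rem:univtriv-CH0-H0S}. You have simply spelled out the two-step chain (universally $\CH_0$-trivial $\Rightarrow$ universally $H_0^S$-trivial by properness and (V\ref{V2}), then apply the equivalence $(1)\Leftrightarrow(4)$ of Theorem \ref{prop:reduction-moving}) that the paper leaves implicit.
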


\begin{remark}\label{rem:compare}
Let $T$ be a smooth quasi-affine scheme over $k$.
It follows from \cite[Theorem 6.4.1]{kmsy3} that
$\Z_\tr(T) \in \PI_\Nis$.
This shows that 
$\P^1$-invariance does not imply reciprocity
(i.e. the converse of the second arrow \eqref{eq:HI-Rec-PI} does not hold).
On the other hand, the conclusion of Corollary \ref{cor:Mer-Kah-gen}
is obvious for $G=\Z_\tr(T)$.
Indeed,
it is not difficult to show
$\Cor(\Spec k, T) \cong \Cor(X, T)$
for any $X \in \Sm$ which is 
connected and proper over $k$
(but not necessary universally $\CH_0$-trivial).
\end{remark}

For $F \in \PST$ we define
\begin{equation} \label{eq:def-olh0}
\ol{h}_0(F) := 
\Coker(i_0^* - i_1^* : \uHom(\Z_\tr(\P^1), F) \to F).
\end{equation}
We write $\ol{h}_0(X):=\ol{h}_0(\Z_\tr(X))$ for $X \in \Sm$.
The main part in the proof of
Theorem \ref{prop:reduction-moving} is the following:

\begin{theorem}[A moving lemma]\label{thm:moving}
We have $\ol{h}_0(X)_\Zar \cong h_0(X)_\Zar$
for any smooth proper scheme $X$ over a field $k$.
(Hence we have $\ol{h}_0(X)_\Nis \cong h_0(X)_\Nis$ as well.)
\end{theorem}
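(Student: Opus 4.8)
The plan is to prove Theorem \ref{thm:moving} by comparing two simplicial resolutions of $\Z_\tr(X)$: the usual Suslin complex $C_\bullet(X)$, whose $n$-th term is $\Z_\tr(X)(\A^n \times -)$, and its projective variant $\ol{C}_\bullet(X)$, obtained by replacing $\A^n$ with $\P^n$. By construction $H_0$ of these complexes recovers $h_0(X)$ and $\ol{h}_0(X)$ respectively, so the statement becomes: the $0$-th homology presheaves $h_0(\ol{C}_\bullet(X))$ and $h_0(C_\bullet(X))=h_0(X)$ agree after Zariski sheafification. The natural inclusion $\A^n \hookrightarrow \P^n$ induces a map of complexes $C_\bullet(X) \to \ol{C}_\bullet(X)$, hence a map $h_0(X)_\Zar \to \ol{h}_0(X)_\Zar$; the content is that this is an isomorphism on sections over each henselian local ring (equivalently, each Zariski stalk). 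Since $X$ is proper, finite correspondences from $\A^n$ or $\P^n$ into $X$ are easy to produce, and properness of $X$ is what makes the comparison tractable.

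First I would unwind both sides at a henselian local scheme $S = \Spec \sO_{U,u}$, $U \in \Sm$: a class in $\ol{h}_0(X)(S)$ is represented by a finite correspondence $Z \in \Cor(\P^1_S, X)$ modulo the relation generated by $i_0^* = i_1^*$, while $h_0(X)(S)$ uses $\A^1_S$ in place of $\P^1_S$; similarly with higher $C_\bullet$, $\ol{C}_\bullet$. Surjectivity of $h_0(X)_\Zar \to \ol{h}_0(X)_\Zar$ amounts to moving an arbitrary finite correspondence $\P^1_S \to X$ to one that factors through $\A^1_S \to X$, up to the cube relations; injectivity amounts to showing that two finite correspondences $\A^1_S \to X$ that become equivalent in $\ol{h}_0(X)$ were already equivalent in $h_0(X)$. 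Both reductions require a moving argument: given a closed point or a divisor at infinity in $\P^1_S \times X$ (or $\P^n_S \times X$), one deforms the cycle $Z$ to push its support off the hyperplane at infinity while keeping it finite and surjective over $S$. The standard tool here is a linear-projection / general-position argument for cycles on $\P^n$, exploiting that $X$ is proper so that the image of $Z$ in $\P^n_S$ is closed and can be translated by a generic automorphism of $\P^n$, combined with a homotopy that connects the original and translated cycles inside $\ol{C}_\bullet(X)$.

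The main obstacle I expect is exactly this moving lemma at the level of cycles: producing, functorially enough in $S$ and compatibly with the face maps, a homotopy in $\ol{C}_\bullet(X)$ that carries a given finite correspondence $\P^1_S \to X$ into the image of $C_\bullet(X)$, while preserving the finiteness/surjectivity conditions over the base $S$. The subtlety is that $S$ is only henselian local, not a field, so one cannot simply invoke classical Bertini-type genericity; one has to work with relative cycles over $S$ and be careful that the linear projections used stay finite over $S$ on the nose. The paper signals (\S\ref{sec:simplicial}, \S\ref{sec:moving lemma}) that this is where the real work lies, and it is reformulated there as Theorem \ref{thm:moving2}, a comparison up to Zariski sheafification of the $0$-th homology presheaves of $C_\bullet(X)$ and $\ol{C}_\bullet(X)$.

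Once the cycle-level moving lemma is in hand, the remaining steps are formal: one checks that the constructed homotopies assemble into an isomorphism of presheaves after Zariski sheafification, uses that a map of Nisnevich sheaves is an isomorphism iff it is one on all henselian local rings (together with (V\ref{V1}) relating Zariski and Nisnevich sheafification for the relevant presheaves), and deduces the parenthetical Nisnevich statement of the theorem from the Zariski one. The properness hypothesis on $X$ is used twice: to guarantee that correspondences from $\P^n_S$ exist and have closed image, and — as recorded in (V\ref{V2}) — so that $h_0(X)$ already computes $\CH_0$; although the latter is not strictly needed for the statement of Theorem \ref{thm:moving} itself, it is what makes the theorem useful in the proof of Theorem \ref{prop:reduction-moving}.
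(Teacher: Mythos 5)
Your high-level strategy matches the paper's: introduce the projective Suslin complex $\ol{C}_\bullet(X)$ alongside $C_\bullet(X)$, form the quotient $Q_\bullet(X)$, and reduce the theorem to showing $H_1(Q_\bullet(X))_\Zar = 0$, with properness of $X$ entering through the closure of cycles in $V \times X \times \ol\Delta^n$. However, there are two problems.

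First, the direction of the comparison map is backwards. The open immersion $\Delta^n \hookrightarrow \ol\Delta^n$ induces a \emph{restriction} $\Cor(U \times \ol\Delta^n, X) \to \Cor(U \times \Delta^n, X)$, i.e.\ an \emph{injection} $\ol{C}_\bullet(X) \hookrightarrow C_\bullet(X)$, not a map $C_\bullet(X) \to \ol{C}_\bullet(X)$. Since $\ol{C}_0 = C_0$, the induced map on $H_0$ is a surjection $\ol{h}_0(X) \twoheadrightarrow h_0(X)$ already as presheaves; the whole content of the theorem is the \emph{injectivity} of this surjection after Zariski sheafification, equivalently that any $\A^1$-homotopy between $0$-cycles can, locally on the base, be replaced by a $\P^1$-homotopy. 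Your discussion of ``surjectivity amounts to moving a correspondence from $\P^1_S$ to one factoring through $\A^1_S$'' mislocates where the work is, and your phrase ``two finite correspondences $\A^1_S \to X$'' conflates $C_1$ (correspondences from $S \times \A^1$) with $C_0$ (correspondences from $S$). Also, the local schemes involved are ordinary Zariski local, not henselian; the Nisnevich statement is then deduced via (V\ref{V4}), not from henselian stalks.

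Second, and more seriously, the actual moving argument — the heart of the theorem — is left as a black box. Saying ``translate by a generic automorphism of $\P^n$'' does not describe the paper's argument and I do not see how to make it work directly: the correspondence is parametrized over a base $V$, and one needs a homotopy \emph{inside} $C_2(X)$ whose boundary corrects $\Gamma$ to something finite over $U \times \ol\Delta^1$, compatibly with the face maps. The paper's proof has two essentially nonformal ingredients that your sketch does not foresee: (i) for $V = \A^N$, the homotopy is produced by translations $\phi_{\bm v}(a,t) = a + t\bm v$ for a sufficiently generic vector $\bm v$, combined with an explicit triangulation of $\A^1 \times \Delta^1$ into two copies of $\Delta^2$ to land in $C_2(X)(\A^N)$, and one must estimate the dimension of the ``bad locus'' $B(\Gamma)$ at infinity to see that generic $\bm v$ works; (ii) the general affine $V$ is reduced to $V = \A^N$ by Noether normalization together with Chow's trick (using a Veronese embedding to control $\pi_f^*\pi_{f*}\Gamma - \Gamma$ so that the extra fibers avoid $B(\Gamma)$). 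Step (ii) in particular is a genuinely separate reduction and is completely absent from your outline. Without supplying these, the proof is a restatement of what needs to be proved rather than a proof.
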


The proof of Theorem \ref{thm:moving}
occupies the next two sections.
In the rest of this section,
we deduce Theorem \ref{prop:reduction-moving}
assuming Theorem \ref{thm:moving}.

\begin{remark}\label{rem:bruno}
For $X$ as in Theorem \ref{thm:moving},
we have an explicit formula:
\[  \ol{h}_0(X)_\Zar(U) \cong \CH_0(X \otimes_k k(U))
\quad \text{for any connected}~U \in \Sm,
\]
because we know
$h_0(X)_\Nis(U) \cong \CH_0(X \otimes_k k(U))$
by \cite[Theorem 3.1.2]{KS}
(and 
we have $h_0(X)_\Zar \cong h_0(X)_\Nis$
by (V\ref{V2}) and (V\ref{V4})).
In particular, 
$\ol{h}_0(X)_\Zar$ is \emph{birational} in the sense of
\cite[Definition 2.3.1]{KS},
that is, 
any open dense immersion $V \hookrightarrow W$ induces an isomorphism 
$\ol{h}_0(X)_\Zar(W) \cong \ol{h}_0(X)_\Zar(V)$.
\end{remark}


Denote by $i_\epsilon : \Spec k \to \P^1$
the closed immersion defined by a rational point
$\epsilon \in \P^1(k)$.

\begin{definition}
Let $F \in \PST$.
We say $F$ is \emph{$\P^1$-rigid} if 
the two induced maps 
\[ i_0^*, ~i_1^* : F(U \times \P^1) \to F(U) \] 
are equal for any $U \in \Sm$.
Denote by $\PR$ the full subcategory of $\PST$
consisting of all $\P^1$-rigid presheaves with transfers.
\end{definition}

\begin{lemma}\label{lem:PI-PRig}
If $F \in \PST$ is $\P^1$-invariant, then it is $\P^1$-rigid.
The converse holds if $F$ is separated for Zariski topology.
\end{lemma}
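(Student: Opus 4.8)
The plan is to prove the two implications separately, both by direct unwinding of the definitions. For the first implication, suppose $F$ is $\P^1$-invariant, so that the projection $\pr\colon U\times\P^1\to U$ induces an isomorphism $\pr^*\colon F(U)\xrightarrow{\cong}F(U\times\P^1)$ for every $U\in\Sm$. The two closed immersions $i_0,i_1\colon\Spec k\to\P^1$ both split the structure map $\sigma\colon\P^1\to\Spec k$, so after base change along $U$ the two maps $\id_U\times i_\epsilon\colon U\to U\times\P^1$ are both sections of $\pr$; hence $i_\epsilon^*\circ\pr^*=\id_{F(U)}$ for $\epsilon=0,1$. Since $\pr^*$ is an isomorphism, this forces $i_0^*=(\pr^*)^{-1}=i_1^*$ as maps $F(U\times\P^1)\to F(U)$, which is exactly $\P^1$-rigidity.

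For the converse, assume $F$ is $\P^1$-rigid and separated for the Zariski topology; I want to show $\pr^*\colon F(U)\to F(U\times\P^1)$ is an isomorphism for all $U\in\Sm$. Injectivity is immediate and needs neither rigidity nor separatedness: $i_0^*\circ\pr^*=\id_{F(U)}$, so $\pr^*$ is a split monomorphism. The content is surjectivity. Cover $\P^1$ by the two standard affine lines $\A^1_0,\A^1_1\cong\A^1$ with intersection $\G_m$, so that $U\times\P^1$ is covered by $U\times\A^1_0$ and $U\times\A^1_1$ with overlap $U\times\G_m$. Given a section $s\in F(U\times\P^1)$, restrict it to each $U\times\A^1_\nu$. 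The key point is that $\P^1$-rigidity of $F$ should force the restriction of $s$ to each $U\times\A^1_\nu$ to come from $F(U)$: on a copy of $\A^1$ sitting inside $\P^1$, the two $k$-points $0$ and $\infty$ of $\P^1$ give two sections, and I will use rigidity (applied over the base $U\times\A^1$, viewing $U\times\A^1\times\P^1\supset U\times\A^1\times\A^1$, i.e.\ thinking of the second $\A^1$-coordinate as living in a $\P^1$) to conclude that $s|_{U\times\A^1_\nu}$ is pulled back from $U$ via the projection $U\times\A^1_\nu\to U$. Thus there exist $a_0,a_1\in F(U)$ with $\pr_0^*a_0=s|_{U\times\A^1_0}$ and $\pr_1^*a_1=s|_{U\times\A^1_1}$. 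Restricting both to $U\times\G_m$ and then further to $U\times\{1\}$ (a $k$-point of $\G_m$), one gets $a_0=a_1=:a$; now $\pr^*a$ and $s$ agree on each member of the Zariski cover $\{U\times\A^1_0,U\times\A^1_1\}$ of $U\times\P^1$, so by the separatedness hypothesis $\pr^*a=s$. This gives surjectivity and completes the proof.

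The step I expect to be the main obstacle is the claim that $\P^1$-rigidity forces $s|_{U\times\A^1_\nu}$ to be constant along the $\A^1$-fibers, i.e.\ the passage from rigidity (a statement about the two $k$-points $0,1\in\P^1(k)$) to genuine $\A^1$-invariance of the restriction to an affine chart. The honest way to see this is: for a fixed chart $\A^1_\nu$, consider the two ``evaluation'' maps obtained by sending the $\A^1$-coordinate to $0$ and to $1$; rigidity over the base $U\times\A^1_\nu$ (with the \emph{first} $\A^1$ playing the role of an open in $\P^1$ containing both these points) shows these two specializations of $s|_{U\times\A^1_\nu}$ agree, but to upgrade ``specializations at $0$ and $1$ agree'' to ``$s|_{U\times\A^1_\nu}=\pr^*(\text{something})$'' one uses that $\A^1$-rigidity implies $\A^1$-invariance for presheaves with transfers — this is where the transfer structure is genuinely used, via the standard argument (as in \cite{MVW}) that the difference $s|_{U\times\A^1_\nu}-\pr^*(i_0^*s|_{U\times\A^1_\nu})$ is killed, or alternatively one invokes that for a $\P^1$-rigid $F$ one already knows $\G_m$-rigidity and patches. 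I would isolate this as a small sub-lemma and prove it carefully; everything else is formal.
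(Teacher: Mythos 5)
Your proof of the forward direction is correct, and the set-up for the converse (split injectivity of $\pr^*$, plus Zariski-separated patching over the two charts) is reasonable. The gap lies exactly in the step you flag as the main obstacle, and the fix you sketch does not close it: you invoke ``$\A^1$-rigidity implies $\A^1$-invariance for $\PST$,'' but that implication takes $\A^1$-rigidity of $F$ as input, while we are only given $\P^1$-rigidity, and the two are genuinely different. The additive group $\G_a=\sO\in\PST$ is a Zariski sheaf with transfers which is $\P^1$-rigid --- indeed $\P^1$-invariant, since $\Gamma(V\times\P^1,\sO)=\Gamma(V,\sO)$ --- yet is not $\A^1$-rigid: on $\sO(V\times\A^1)=\Gamma(V,\sO)[t]$ the two maps $i_0^*$ and $i_1^*$ set $t$ to $0$ and to $1$ and visibly disagree. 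The same example shows $\P^1$-rigidity does not yield $\G_m$-rigidity either, so your alternative route has the same defect.

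Trying to apply only the given $\P^1$-rigidity in the multiplication-homotopy argument also breaks, for a geometric reason. You would need the element $(\id_U\times\mu)^*\bigl(s|_{U\times\A^1_\nu}\bigr)\in F(U\times\A^1_\nu\times\A^1)$, with $\mu(x,t)=tx$, to be the restriction of an element of $F(U\times\A^1_\nu\times\P^1)$ in order to invoke $\P^1$-rigidity in the last variable. But the scaling map $\A^1\times\A^1\to\A^1$, $(x,t)\mapsto tx$, does not extend to a morphism $\A^1\times\P^1\to\P^1$ (there is an indeterminacy point at $(x,t)=(0,\infty)$), and the closure of its graph in $\A^1\times\P^1\times\P^1$ has all of $\P^1$ as its fiber over $(0,\infty)$, hence is not finite over the source --- so it does not even define a finite correspondence. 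The hypothesis of $\P^1$-rigidity is therefore unavailable where the argument needs it, and the proof of the converse is incomplete. Note that the paper gives no proof of this lemma, citing \cite[Proposition 6.1.4]{rec}; the argument there must proceed differently.
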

\begin{proof}
See \cite[Proposition 6.1.4]{rec}.
\end{proof}

\begin{lemma}
\begin{enumerate}
\item 
For $F \in \PST$, 
the following conditions are equivalent:
\begin{enumerate}
\item 
$F$ is $\P^1$-rigid.
\item 
The two induced maps 
$i_0^*, ~i_1^* : \uHom(\Z_\tr(\P^1), F) \to F$ are equal.
\item
The canonical surjection $F \to \ol{h}_0(F)$
is an isomorphism.
\item
The canonical injection
$\PST(\ol{h}_0(U), F) \to \PST(\Z_\tr(U), F)$ 
is an isomorphism for each $U \in \Sm$.
\end{enumerate}
\item
The formula \eqref{eq:def-olh0}
defines
a left adjoint $\ol{h}_0 : \PST \to \PR$
of the inclusion functor $\PR \hookrightarrow \PST$.
\item
We have $\HI_\Nis \subset \PI_\Nis$.
\end{enumerate}
\label{lem:P1rigid}
\end{lemma}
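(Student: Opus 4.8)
The plan is to derive all three parts formally. I would start by fixing the natural isomorphism $\uHom(\Z_\tr(\P^1),F)(U)=\PST(\Z_\tr(\P^1)\otimes\Z_\tr(U),F)=\PST(\Z_\tr(\P^1\times U),F)\cong F(\P^1\times U)$ (using \eqref{eq:def-uHom}, the monoidality $\Z_\tr(\P^1)\otimes\Z_\tr(U)=\Z_\tr(\P^1\times U)$, and Yoneda), together with the observation that under it the maps $i_0^*,i_1^*\colon\uHom(\Z_\tr(\P^1),F)\to F$ become the presheaf pullbacks along $i_0\times\id_U$ and $i_1\times\id_U\colon U\to\P^1\times U$; I would check once that this is natural in $F$, so that $i_0^*-i_1^*$ is a natural transformation $\uHom(\Z_\tr(\P^1),-)\Rightarrow\id$ of endofunctors of $\PST$. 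In Part~(1), (a)$\Leftrightarrow$(b) is then immediate (both say the two pullbacks agree). For (b)$\Leftrightarrow$(c): by \eqref{eq:def-olh0}, $\ol{h}_0(F)=\Coker(i_0^*-i_1^*)$, and in the abelian category $\PST$ the canonical epimorphism $F\to\Coker(\psi)$ is an isomorphism iff $\psi=0$, which I apply to $\psi=i_0^*-i_1^*$. For (b)$\Leftrightarrow$(d): applying $\PST(-,F)$ to the right-exact presentation $\uHom(\Z_\tr(\P^1),\Z_\tr(U))\xrightarrow{i_0^*-i_1^*}\Z_\tr(U)\to\ol{h}_0(U)\to 0$ and using Yoneda identifies the canonical injection $\PST(\ol{h}_0(U),F)\hookrightarrow\PST(\Z_\tr(U),F)=F(U)$ with the inclusion of $\Ker\big((i_0^*-i_1^*)^*\big)$; hence (d) amounts to the vanishing, for all $U$, of $(i_0^*-i_1^*)^*\colon F(U)\to\PST(\uHom(\Z_\tr(\P^1),\Z_\tr(U)),F)$. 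For $x\in F(U)$ with classifying map $\tilde x\colon\Z_\tr(U)\to F$, naturality gives $(i_0^*-i_1^*)^*(x)=(i_0^*-i_1^*)_F\circ\uHom(\Z_\tr(\P^1),\tilde x)$, which vanishes when (b) holds, so (b)$\Rightarrow$(d); conversely, testing (d) at $U=\P^1\times W$ and evaluating the vanishing of $(i_0^*-i_1^*)^*(\xi)$, for $\xi\in F(\P^1\times W)$, on the section given by the identity correspondence of $\P^1\times W$ unwinds to $i_0^*\xi-i_1^*\xi=0$ in $F(W)$, i.e.\ (b).

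For Part~(2): since $\ol{h}_0$ is the cokernel of a natural transformation between additive endofunctors, it is an additive endofunctor of $\PST$, and I would check it takes values in $\PR$. Writing $\ol{h}_0(G)(U)=G(U)/\iota_U\big(G(\P^1\times U)\big)$ with $\iota_U=(i_0\times\id_U)^*-(i_1\times\id_U)^*$, a class $[x]\in\ol{h}_0(G)(U\times\P^1)$ satisfies $i_0^*[x]-i_1^*[x]=[(\id_U\times i_0)^*x-(\id_U\times i_1)^*x]$, and the bracketed element is, up to the swap of the two $\P^1$-factors, exactly $\iota_U$ applied to $x$; hence its class is $0$ and $\ol{h}_0(G)\in\PR$. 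For the adjunction, let $F\in\PR$; applying $\PST(-,F)$ to $\uHom(\Z_\tr(\P^1),G)\xrightarrow{i_0^*-i_1^*}G\to\ol{h}_0(G)\to 0$ and using the naturality identity from Part~(1), any $\phi\colon G\to F$ satisfies $\phi\circ(i_0^*-i_1^*)=(i_0^*-i_1^*)_F\circ\uHom(\Z_\tr(\P^1),\phi)=0$, because $(i_0^*-i_1^*)_F=0$ by the equivalence (a)$\Leftrightarrow$(b) applied to $F\in\PR$; therefore $\PST(\ol{h}_0(G),F)\cong\PST(G,F)$, naturally in $G$ and $F$, the isomorphism being precomposition with the canonical map $G\to\ol{h}_0(G)$. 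As $\PR\hookrightarrow\PST$ is full, this says precisely that $\ol{h}_0$ is left adjoint to the inclusion.

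For Part~(3): let $F\in\HI_\Nis$. I would first show $F\in\PR$. The points $0,1\in\P^1(k)$ both lie in the affine chart $\A^1=\P^1\setminus\{\infty\}$, so for $U\in\Sm$ the two sections $\id_U\times i_\epsilon\colon U\to U\times\P^1$ ($\epsilon=0,1$) factor through $U\times\A^1\hookrightarrow U\times\P^1$; hence $i_\epsilon^*\colon F(U\times\P^1)\to F(U)$ factors through $F(U\times\A^1)$, on which the pullbacks along the two sections of the projection $U\times\A^1\to U$ coincide, since this projection induces an isomorphism by homotopy invariance. Thus $i_0^*=i_1^*$ on $F(U\times\P^1)$, i.e.\ $F\in\PR$. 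As $F$ is a Nisnevich sheaf it is separated for the Zariski topology, so Lemma~\ref{lem:PI-PRig} upgrades $\P^1$-rigidity to $\P^1$-invariance; together with $F\in\NST$ this gives $F\in\PI_\Nis$, that is $\HI_\Nis\subset\PI_\Nis$.

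The whole lemma is formal, so I do not anticipate a genuine obstacle. The two places that demand a little care are the opening compatibility check for the identification $\uHom(\Z_\tr(\P^1),-)\cong(-)(\P^1\times-)$ — that it intertwines $i_0^*$ and $i_1^*$ and is natural in $F$ — and, in Part~(2), keeping the two $\P^1$-factors straight when verifying that $\ol{h}_0(G)$ is $\P^1$-rigid.
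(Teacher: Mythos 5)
Your proof is correct, and at two places it takes a genuinely different route from the paper. For the equivalence $(b)\Leftrightarrow(d)$ and for Part~(2), the paper argues by factoring maps $\Z_\tr(U)\to F$ through $\ol{h}_0(U)$ and then, for general $G$, reduces to $G=\Z_\tr(U)$ by choosing a presentation $\bigoplus_\beta\Z_\tr(V_\beta)\to\bigoplus_\alpha\Z_\tr(U_\alpha)\to G\to0$; you instead promote $i_0^*-i_1^*$ to a natural transformation $\uHom(\Z_\tr(\P^1),-)\Rightarrow\id$, prove the adjunction universal property from that naturality directly, and verify by hand (keeping the two $\P^1$-factors straight) that $\ol{h}_0(G)$ is $\P^1$-rigid for every $G$. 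This buys independence from the existence of free resolutions in $\PST$, at the cost of one more explicit diagram chase; the paper's resolution argument is shorter once that standard fact is granted. For Part~(3) the paper deduces $\HI\subset\PRig$ from the chain of surjections $F\twoheadrightarrow\ol h_0(F)\twoheadrightarrow h_0(F)$ and Part~(1), whereas you give the elementary observation that both $i_0^*$ and $i_1^*$ factor through restriction to $U\times\A^1$ where they coincide by homotopy invariance; both then invoke Lemma~\ref{lem:PI-PRig}. Your Part~(1)$(d)\Rightarrow(b)$, testing at $U=\P^1\times W$ and evaluating on the identity correspondence, is a correct unwinding and an acceptable substitute for the paper's factorization step.
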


\begin{proof}
(1) The equivalence of (a), (b) and (c)
follows from \eqref{eq:def-uHom} and \eqref{eq:def-olh0}.
If (c) holds, then any morphism
$\Z_\tr(U) \to F$ factors as
$\Z_\tr(U) \twoheadrightarrow \ol{h}_0(U) \to \ol{h}_0(F) \cong F$,
whence (d).
If (d) holds, then any morphism
$\Z_\tr(U) \to F$ factors as
$\Z_\tr(U) \twoheadrightarrow \ol{h}_0(U) \to F$,
whence (b).

(2) We need to show 
$\PST(\ol{h}_0(G), F) \cong \PST(G, F)$
for any $F \in \PR$ and $G \in \PST$.
This is (d) above if $G=\Z_\tr(U)$ for $U \in \Sm$,
to which the general case is reduced
by taking a resolution of the form
\[ \bigoplus_\beta \Z_\tr(V_\beta) \to \bigoplus_\alpha \Z_\tr(U_\alpha)
\to G \to 0,
\]
where $U_\alpha, ~V_\beta \in \Sm$
(see \cite[\S 3.2]{voetri}).

(3) 
Given $F \in \PST$,
we have a chain of canonical surjections
$F \twoheadrightarrow \ol{h}_0(F) \twoheadrightarrow h_0(F)$,
and $F$ is homotopy invariant if and only if the composition is an isomorphism.
It follows from (1) that $\HI \subset \PRig$.
Now the assertion follows from Lemma \ref{lem:PI-PRig}.
\end{proof}

\begin{lemma}\label{lem:sheaf}
Suppose $G \in \PST$ is a Zariski sheaf.
Then $h^0(G)$ is a Nisnevich sheaf.
\end{lemma}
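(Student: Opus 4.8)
The plan is to use the explicit formula (V\ref{V21}) for $h^0$, namely $h^0(G)(U)=\PST(h_0(U),G)$, and reduce the Nisnevich sheaf condition to the Zariski sheaf condition for $G$ together with good behaviour of $h_0$ of a Nisnevich cover. Concretely, fix $U \in \Sm$ and a Nisnevich cover $\{U_i \to U\}$; I want to show that $h^0(G)$ satisfies the sheaf axiom for this cover, i.e.\ that
\[
h^0(G)(U) \to \prod_i h^0(G)(U_i) \rightrightarrows \prod_{i,j} h^0(G)(U_i \times_U U_j)
\]
is an equalizer. Since $h^0(G)(-)=\PST(h_0(-),G)$ and $\PST(-,G)$ turns colimits into limits, it suffices to show that the \v{C}ech-type complex $\bigoplus_{i,j}\Z_\tr(U_i\times_U U_j) \to \bigoplus_i \Z_\tr(U_i) \to \Z_\tr(U) \to 0$ becomes, after applying $h_0$, a complex whose image under $\PST(-,G)$ is exact — equivalently that $\PST(-,G)$ sends it to an exact sequence.

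First I would reduce to the case of an \emph{elementary Nisnevich square} (a distinguished square), since every Nisnevich sheaf condition is generated by these together with the Zariski condition; this is the standard reduction, e.g.\ as in \cite{MVW}. So let
\[
\begin{CD}
W @>>> V\\
@VVV @VV{e}V\\
U' @>{j}>> U
\end{CD}
\]
be an elementary Nisnevich square, with $j$ an open immersion, $e$ \'etale, and $e$ an isomorphism over $U\setminus U'$. The key point is that the sequence of representable presheaves with transfers
\[
0 \to \Z_\tr(W) \to \Z_\tr(U') \oplus \Z_\tr(V) \to \Z_\tr(U) \to 0
\]
is exact in $\NST$ (this is \cite[Lemma/Prop]{MVW}, the Mayer--Vietoris for Nisnevich squares); more precisely it is exact as a complex of Nisnevich sheaves, so it becomes a short exact sequence after applying $a_\Nis$. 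Next I would apply $h_0$: because $h_0$ is a left adjoint it is right exact, so
\[
h_0(W) \to h_0(U') \oplus h_0(V) \to h_0(U) \to 0
\]
is exact in $\PST$; moreover applying $a_\Nis$ (which is exact) to the Nisnevich-exact sequence above and then taking $h_0$ on the Nisnevich-sheafified level should give that the $h_0$-sequence is even \emph{short} exact in $\HI_\Nis$ — here I would invoke that for a distinguished square the map $\Z_\tr(W)\to\Z_\tr(U')\oplus\Z_\tr(V)$ is a (split, Nisnevich-locally) monomorphism and that $h_0$ of the representables computes Suslin homology, using (V\ref{V2}). Then applying $\HI(-,h^0(G))$... wait, more directly: applying $\PST(-,G)$ to the right-exact sequence $h_0(W)\to h_0(U')\oplus h_0(V)\to h_0(U)\to 0$ gives a left-exact sequence
\[
0 \to h^0(G)(U) \to h^0(G)(U')\oplus h^0(G)(V) \to h^0(G)(W),
\]
which is precisely the sheaf condition for $h^0(G)$ on this square, \emph{provided} the last arrow has the right image — and here is where the Zariski-sheaf hypothesis on $G$ enters: I would need that $\Ext^1$-type obstruction vanishes, i.e.\ that $\PST(-,G)$ applied to the kernel term still behaves, which I expect to control because $G$ being a Zariski sheaf forces $\PST(h_0(U'\cap_U \cdots),G)$ to glue correctly on the open cover underlying $U'$.

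\textbf{Main obstacle.} The hard part will be the transition from right-exactness of the $h_0$-sequence (which is automatic) to genuine left-exactness of the resulting Hom-sequence — i.e.\ controlling the possible failure caused by the kernel of $h_0(W)\to h_0(U')\oplus h_0(V)$ and the nonexactness of $\PST(-,G)$ on the left. The Zariski-sheaf hypothesis on $G$ is exactly what is available to kill this, presumably via the fact that the distinguished square restricts to a Zariski cover of $U'$ (so $G$ sees no difference between $W$ and the corresponding open of $U'$) combined with the identification $h^0(G)(U)=\PST(h_0(U),G)$; making this precise — perhaps by writing $h^0(G)$ as an explicit subpresheaf of $\prod_{x} G(\sO_{U,x})$ or of $\prod G(\text{generic points})$ and checking Nisnevich descent there — is the crux. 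I would structure the final writeup around that reduction to distinguished squares and then a careful diagram chase using $G=G_\Zar$.
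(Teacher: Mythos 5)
Your approach (checking the Nisnevich sheaf condition directly via the explicit formula $h^0(G)(U)=\PST(h_0(U),G)$ and Mayer--Vietoris for elementary Nisnevich squares) is genuinely different from the paper's, but as written it has a gap that your own ``main obstacle'' paragraph circles around without closing.

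The specific error is the claim that right-exactness of $h_0$ gives exactness of
$h_0(W)\to h_0(U')\oplus h_0(V)\to h_0(U)\to 0$ in $\PST$. Right-exactness of the left adjoint $h_0$ only preserves sequences that are already right-exact in $\PST$, and the Mayer--Vietoris sequence
$\Z_\tr(W)\to\Z_\tr(U')\oplus\Z_\tr(V)\to\Z_\tr(U)\to 0$
is \emph{not} right-exact in $\PST$ --- it only becomes so after applying $a_\Nis$ (this is exactly the content of \cite[12.7]{MVW}). Concretely, already for a Zariski cover of $\A^1$ by two opens, the identity correspondence $\A^1\to\A^1$ does not lift through the cover, so the $\PST$-cokernel of $\Z_\tr(U')\oplus\Z_\tr(V)\to\Z_\tr(U)$ is nonzero. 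Thus applying $\PST(-,G)$ does not automatically produce the sheaf condition, and you would still need to argue that $\PST(-,G)$ kills this cokernel after passing through $h_0$; this is precisely the ``obstruction'' you mention, and it is not clear how the Zariski hypothesis on $G$ would let you control it along your proposed route (if it did, your argument would not obviously use the hypothesis, which should be a warning sign).

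The paper avoids all of this with a short maximality argument: $h^0(G)\hookrightarrow G$ gives, after Zariski sheafification (which is exact), an injection $h^0(G)_\Zar\hookrightarrow G_\Zar=G$; this forces $j\colon h^0(G)\to h^0(G)_\Zar$ to be injective. Since $h^0(G)\in\HI$, (V4) gives $h^0(G)_\Zar=h^0(G)_\Nis$ and (V1) shows it is again homotopy invariant. So $h^0(G)_\Zar$ is a homotopy invariant sub-presheaf-with-transfers of $G$ containing $h^0(G)$, hence equal to $h^0(G)$ by maximality. This is where the Zariski hypothesis does its work (you need $G_\Zar=G$ to land back inside $G$), and it sidesteps Mayer--Vietoris entirely. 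I would recommend adopting that route rather than trying to repair the Mayer--Vietoris argument.
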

\begin{proof}
This is essentially shown in \cite[\S 4.34]{RS}, 
but we include a short proof for the completeness sake.
We consider a commutative diagram:
\[
\xymatrix{
h^0(G) \ar@{^{(}->}[r] \ar[d]_j & 
G \ar@{=}[d]
\\
h^0(G)_\Zar \ar@{^{(}->}[r] & 
G_\Zar
}
\]
The bottom arrow is injective since the sheafification is exact.
This shows the injectivity of $j$.
The fact (V\ref{V4}) shows  $h^0(G)_\Zar=h^0(G)_\Nis$, 
and (V\ref{V1}) shows it is homotopy invariant.
Hence $j$ must be isomorphic since $h^0(G) \subset G$
is the maximal subobject in $\HI$.
\end{proof}

\begin{proof}
[Proof of Theorem \ref{prop:reduction-moving}, admitting Theorem \ref{thm:moving}]
That (4) implies (3) follows from Lemma \ref{lem:P1rigid} (3).
To show the converse, 
we assume (3) and take $G \in \PI_\Nis$. 
Set $F:=h^0(G)$.
By Lemma \ref{lem:sheaf} we find $F \in \HI_\Nis$,
and hence we have  $f^* : F(Y) \cong F(X)$ by the assumption (3).
It remains to show  $G(X)=F(X)$ for any proper $X \in \Sm$.
By Theorem \ref{thm:moving} we have
$\ol{h}_0(X)_\Nis=h_0(X)_\Nis \in \HI_\Nis \subset \PST$. 
We now proceed as follows:
\begin{align*}
G(X) 
&=\PST(\Z_\tr(X), G)& 
&\text{Yoneda}&
\\
&=\PST(\ol{h}_0(X), G)&
&\text{Lemma \ref{lem:P1rigid} (d)}&
\\
&=\PST(\ol{h}_0(X)_\Nis, G)&
&\text{$G$ is a Nisnevich sheaf}&
\\
&=\PST(h_0(X)_\Nis, G)&
&\ol{h}_0(X)_\Nis=h_0(X)_\Nis&
\\
&=\PST(h_0(X), G)&
&\text{$G$ is a Nisnevich sheaf}&
\\
&=\HI(h_0(X), F) &
&\text{$h^0$ is a right adjoint to the inclusion}&
\\
&=\PST(\Z_\tr(X), F) &
&\text{$h_0$ is a left adjoint to the inclusion}&
\\
&=F(X)&
&\text{Yoneda}.&
\end{align*}
This completes the proof.
\end{proof}

\section{Projective Suslin complex}\label{sec:simplicial}

Fix $X \in \Sm$ in this section.
After a brief review of the definition
of the Suslin complex of $X$,
we define its variant using the projective spaces $\P^n$.
This will be used in the proof of the moving lemma in the next section.
(We will use them only up to degree two).

For each non-negative integer $n$, we write
\[ \Delta^n :=\Spec k[t_0, \dots, t_n]/(t_0+\cdots+t_n-1) \cong \A^n.
\]
For $j=0, \dots, n$, we define
\begin{equation}\label{def:facemaps}
i_{n, j} : \Delta^{n-1} \to \Delta^n;
\quad
(t_0, \dots, t_{n-1}) \mapsto (t_0, \dots, t_{j-1}, 0, t_j, \dots, t_{n-1}).
\end{equation}
The \emph{Suslin complex} $C_\bullet(X)$ of $X$
is a complex in $\PST$ defined by
\begin{align*}
&C_n(X):=\uHom(\Z_\tr(\Delta^n), \Z_\tr(X)),
\\
& \pd_n := \sum_{j=0}^n (-1)^j i_{n, j}^* 
: C_n(X) \to C_{n-1}(X).
\end{align*}
Its homology is denoted by $h_n(X) \in \PST$. 
For $n=0$, it recovers $h_0(X)$ from (V\ref{V2}).

For each non-negative integer $n$, we put
\[ 
\ol{\Delta}^n:= \Proj k[t_0, \dots, t_{n+1}]/(t_0+\cdots+t_n-t_{n+1})
\cong \P^n.
\]
We have a canonical open immersion
$\iota_n : \Delta^n \hookrightarrow \ol{\Delta}^n$,
which is isomorphic to $\A^n \hookrightarrow \P^n$.
It induces a map in $\PST$
\begin{equation}\label{eq:incl-olC-C}
\ol{C}_n(X):=\uHom(\Z_\tr(\ol{\Delta}^n), \Z_\tr(X))
\hookrightarrow C_n(X),
\end{equation}
which is injective for any $n$ and isomorphic for $n=0$.
Indeed, its section over $U \in \Sm$ is given by 
\[ (\id_U \times \iota_n)^* 
: \Cor(U \times \ol{\Delta}^n, X) \to \Cor(U \times \Delta^n, X),
\]
which injective in general and isomorphic for $n=0$.
We regard $\ol{C}_n(X)$ as a subobject in $\PST$ of $C_n(X)$.
Since the morphisms $i_{n, j}$ from \eqref{def:facemaps}
extends (uniquely) to morphisms $\ol{\Delta}^{n-1} \to \ol{\Delta}^{n}$,
we obtain a subcomplex $\ol{C}_\bullet(X)$ of $C_\bullet(X)$.
We write its homology by $\ol{h}_n(X) \in \PST$.
For $n=0$, it recovers $\ol{h}_0(X)$ from \eqref{eq:def-olh0}.

We write
\begin{equation}\label{eq:def-Qcpx}
Q_\bullet(X):=C_\bullet(X)/\ol{C}_\bullet(X) 
\end{equation}
for the quotient complex,
and its homology presheaf
is denoted by $H_n(Q_\bullet(X)) \in \PST$.
We have $H_0(Q_\bullet(X))=0$,
for \eqref{eq:incl-olC-C} is isomorphic for $n=0$.
Since the sheafification is exact,
we obtain  an exact sequence
\[ H_1(Q_\bullet(X))_\Zar
\to \ol{h}_0(X)_\Zar \to h_0(X)_\Zar \to H_0(Q_\bullet(X))_\Zar=0.
\]
Theorem \ref{thm:moving} is now reduced to the following.

\begin{theorem}\label{thm:moving2}
If $X \in \Sm$ is proper, then we have $H_1(Q_\bullet(X))_\Zar=0$.
\end{theorem}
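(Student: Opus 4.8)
The plan is to unwind the statement $H_1(Q_\bullet(X))_\Zar = 0$ into a concrete cycle-theoretic assertion about correspondences and then to prove it by an explicit moving construction. Fix a connected $U \in \Sm$; by definition of the Zariski sheafification it suffices to show that for every point $u \in U$, after possibly shrinking $U$ to a Zariski neighborhood of $u$, a $1$-cycle in $Q_1(X)(U) = \Cor(U \times \Delta^1, X)/\Cor(U \times \ol\Delta^1, X)$ that is killed by $\pd_1$ modulo $\ol C_0$ is a boundary $\pd_2(\gamma)$ modulo $\ol C_1$. Concretely, one starts with $\alpha \in \Cor(U \times \Delta^1, X)$ whose two faces $i_{1,0}^*\alpha - i_{1,1}^*\alpha \in \Cor(U,X) = \ol C_0(X)(U)$ lie in the image of $\ol C_1(X)(U) = \Cor(U \times \ol\Delta^1, X)$ under $\pd_1$, i.e.\ (after adjusting $\alpha$ by an element of $\ol C_1$) $i_{1,0}^*\alpha = i_{1,1}^*\alpha$. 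One must then produce $\gamma \in \Cor(U \times \Delta^2, X)$ and $\beta \in \Cor(U \times \ol\Delta^1, X)$ with $\alpha = \pd_2 \gamma + (\text{image of }\beta)$, possibly after localizing on $U$.

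The key steps, in order: first, reduce to $X$ being projective and to analyzing the closure $\ol Z$ in $U \times \ol\Delta^1 \times X = U \times \P^1 \times X$ of an integral finite correspondence $Z \subset U \times \A^1 \times X$, together with its boundary $\ol Z \cap (U \times \{\infty\} \times X)$; the class $\alpha$ lies in $\ol C$ precisely when this boundary component is itself finite over $U$, which is the obstruction we must kill. Second, use a general position / pencil argument: choose (after shrinking $U$ around $u$, using that we only need a Zariski-local statement) a suitable linear system or a projection that moves $\ol Z$ so that its intersection with $U \times \{\infty\} \times X$ becomes finite over $U$, recording the homotopy as a correspondence on $U \times \Delta^2 \times X$. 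This is where the projective space $\P^n$ rather than $\A^n$ is essential: the extra hyperplane at infinity is the room in which the moving takes place, and properness of $X$ guarantees that the relevant closures stay proper, hence the finiteness conditions defining finite correspondences can be checked and preserved. Third, assemble $\gamma$ and $\beta$ from this homotopy and verify the face identities $\pd_2 \gamma = \alpha - (\text{image of }\beta)$ by the standard simplicial bookkeeping for $\Delta^2$.

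The main obstacle will be the moving step itself: given a finite correspondence from $U \times \A^1$ to the proper $X$, one must construct, Zariski-locally on $U$, an explicit homotopy parametrized by $\Delta^2$ whose effect is to make the fiber at $\infty \in \P^1$ finite (indeed finite flat of the right degree) over $U$, while keeping all the intermediate cycles honest finite correspondences — i.e.\ integral subschemes finite and surjective over components of the base. Controlling finiteness over the base throughout a family of moves is delicate; the natural tool is a Bertini/general-position argument over the (possibly infinite, but we may pass to a Zariski-local situation on $U$) residue fields, combined with the observation that for proper $X$ the condition "quasi-finite $\Rightarrow$ finite" is automatic, so one only needs to achieve quasi-finiteness generically and then shrink $U$. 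I would expect the bulk of the next section to be devoted exactly to making this moving construction precise and to checking that the resulting cycles satisfy the defining properties of $\Cor$.
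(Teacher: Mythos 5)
Your high-level reading of what has to be done is correct: $H_0(Q_\bullet)=0$ automatically, so the claim amounts to showing that, Zariski-locally, any $\Gamma\in C_1(X)(V)$ lies in $\ol C_1(X)+\pd_2 C_2(X)$, and the obstruction is the failure of the closure $\ol\Gamma\subset V\times X\times\ol\Delta^1$ to be finite over $V\times\ol\Delta^1$, i.e.\ the behavior over $V\times\{\infty\}$. Your identification of this "bad locus at infinity" is exactly what drives the paper's argument. However, the proposal has two genuine gaps that prevent it from going through as written.

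First, the strategy of "shrinking $U$ around $u$, achieving quasi-finiteness generically, then using that quasi-finite plus proper implies finite" does not address the actual problem. The bad locus $B(\Gamma)\subset V\times\{\infty\}$ projects to a proper closed subset $\ol B(\Gamma)\subsetneq V$, but the closed point $s$ of $U$ may lie in $\ol B(\Gamma)$. Shrinking $U$ around $s$ cannot remove $s$ from $\ol B(\Gamma)$, so no amount of shrinking makes $\ol\Gamma|_U$ finite at infinity. One must actually move $\Gamma$, not shrink the base. The paper's construction replaces $\Gamma$ by $\pi_f^*\pi_{f*}\Gamma$ for a carefully chosen Noether-normalization projection $\pi_f\colon V\to\A^N$ and shows, via Chow's technique (Proposition~\ref{prop:Chow}), that $\pi_f^*\pi_{f*}\Gamma - \Gamma$ restricted to $U$ is supported over $V\setminus\ol B(\Gamma)$ on the extra sheets of $V\times_{\A^N}U\cong U\sqcup T$; it is precisely the identity \eqref{eq:tautologically} and the decomposition \eqref{eq:U-part-and-U'-part} that allow one to kill the contribution of the bad locus. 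This is not a local statement on $U$ but a global construction on $V$ followed by restriction.

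Second, the "suitable linear system or Bertini pencil" step is where the real work lies, and the proposal leaves it unspecified in a way that matters. Whatever moves you make must themselves be finite correspondences in $C_2(X)$, i.e.\ cycles on $V\times X\times\Delta^2$ finite and surjective over $V\times\Delta^2$; an arbitrary algebraic family of moves does not obviously satisfy this. The paper first treats $V=\A^N$ by translations $\tau_{\bm v}$ along a general vector $\bm v$, using the affine interpolation $\varphi_{\bm v}(a,t)=a+t\bm v$ and the triangulation $\sigma_1,\sigma_2\colon\Delta^2\iso\A^1\times\Delta^1$ to produce an honest element $\widetilde\Gamma\in C_2(X)(\A^N)$ with the required boundary \eqref{eq:derivative-of-Gamma-tilde}; finiteness is preserved because translations are finite, and the remaining degenerate face terms are handled by a separate cone-at-infinity argument (Lemma~\ref{lem:bad-locus-lower-dim}). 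It then bootstraps the general affine $V$ to $\A^N$ via $\pi_f$, avoiding any direct moving on $V$. Your proposal does gesture at projections, but it does not contain the dimension count (Lemma~\ref{lem:bad-locus}) showing the bad locus is a proper closed subset, the triangulation mechanism, or the Chow-style analysis of $\pi_f^*\pi_{f*}s$ that makes the general case reduce to the affine-space case. These are not details to be filled in routinely; they are the core of the argument.
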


\begin{remark}
\begin{enumerate}
\item 
In general,
$\ol{h}_n(X)_\Zar \to h_n(X)_\Zar$ is not isomorphic for $n>0$.
Indeed, one easily checks 
$H_2(Q_\bullet(\P^1))_\Zar(k)=H_2(Q_\bullet(\P^1))(k) \not= 0$.
\item 
The properness assumption on $X$ is essential.
Indeed, it follows from
\cite[Theorem 6.4.1]{kmsy3}
that if $X$ is quasi-affine
then all the boundary maps of $\ol{C}_\bullet(X)$ are the zero maps.
\end{enumerate}
\end{remark}

\section{Moving lemma}\label{sec:moving lemma}

%

We shall prove Theorem \ref{thm:moving2} 
in the following (equivalent) form:

\begin{theorem}\label{thm:moving-lemma}
Let $X\in \Sm $ be proper.
   For every irreducible affine $V\in \Sm $ and local scheme $U$ at a closed point of $V$, the restriction map
\[ H_1(\Qbullet )(V) \to H_1(\Qbullet )(U)\]
is the zero map.
(See \eqref{eq:colim-F-ext} for the 
definition of $H_1(\Qbullet )(U)$.)
\end{theorem}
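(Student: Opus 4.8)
The goal is to kill, up to Zariski sheafification, the first homology presheaf of the quotient complex $Q_\bullet(X) = C_\bullet(X)/\ol C_\bullet(X)$. Since $H_1(Q_\bullet(X))_\Zar$ is the Zariski sheafification of $V \mapsto H_1(Q_\bullet(X))(V)$ and Zariski sheafification can be computed on local schemes at closed points (using \eqref{eq:colim-F-ext} to extend $H_1(Q_\bullet)$ to such schemes), it suffices to show that for every irreducible affine $V \in \Sm$ and every closed point $v \in V$ with local scheme $U = \Spec \sO_{V,v}$, the restriction $H_1(Q_\bullet)(V) \to H_1(Q_\bullet)(U)$ vanishes; this is the reformulation Theorem \ref{thm:moving-lemma}. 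The plan is to trace a class in $H_1(Q_\bullet)(V)$ back to an explicit cycle and construct, after restricting to $U$, an explicit homotopy that exhibits it as a boundary.

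\textbf{Key steps.} First I would unwind what a degree-one cycle in $Q_\bullet(X)$ is: an element of $C_1(X)(V) = \Cor(V \times \Delta^1, X)$ whose image in $C_0(X)(V)/\ol C_0(X)(V)$ under $\pd_1 = i_{1,0}^* - i_{1,1}^*$ is zero, modulo the subgroup $\ol C_1(X)(V) = \Cor(V \times \ol\Delta^1, X)$. Since $\ol C_0(X) = C_0(X)$ (the inclusion \eqref{eq:incl-olC-C} is an isomorphism in degree $0$), the condition $\pd_1 \alpha \in \ol C_0(X)(V)$ is automatic, so really a class in $H_1(Q_\bullet)(V)$ is represented by a correspondence $\alpha \colon V \times \A^1 \to X$ modulo (i) finite correspondences from $V \times \P^1$ and (ii) boundaries $\pd_2\beta$ with $\beta \in C_2(X)(V)$. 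Second, I would pass to $U = \Spec \sO_{V,v}$ and use properness of $X$ crucially: a correspondence $\alpha$ on $U \times \A^1$, being finite over $U \times \A^1$ and landing in a proper $X$, has a closure in $U \times \P^1 \times X$, but this closure need not be finite over $U \times \P^1$ — the obstruction lives over the "points at infinity." The heart of the argument is a moving/deformation step: using that $U$ is local (so that one has enough room to move the support, via a generic projection or a Chow-type moving lemma over the semilocal base, together with a standard homotopy on $\A^1$ pushing the bad fibers to infinity), I would show that modulo $\pd_2(C_2(X)(U))$ and modulo $\ol C_1(X)(U)$ the class of $\alpha$ can be represented by a correspondence that actually extends to $U \times \P^1$, hence is zero in $H_1(Q_\bullet)(U)$. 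Concretely, I expect to build an explicit $\beta \in \Cor(U \times \Delta^2, X)$ (a "linear homotopy" in the $\Delta^\bullet$-direction combined with a shrinking of $\A^1 \subset \P^1$) with $\pd_2 \beta \equiv \alpha \pmod{\ol C_1(X)(U)}$, which forces $[\alpha] = 0$ in $H_1(Q_\bullet)(U)$.

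\textbf{Main obstacle.} The delicate point is the deformation/finiteness control over the local base $U$: one must move the given correspondence $\alpha$ so that its closure in $U \times \P^1 \times X$ becomes finite over $U \times \P^1$ (equivalently, so that no component of $\alpha$ acquires "vertical" components over the closed fiber at infinity), and one must do so by a chain of operations each of which changes the class only by a boundary in $Q_\bullet$ or by an element of $\ol C_\bullet(X)$. This is exactly where properness of $X$ and the locality of $U$ enter, and it is the analogue — in the $\P^n$-Suslin setting rather than the $\A^n$-setting — of the classical moving lemmas for the Suslin complex; I anticipate that the technical core will be a Bertini-type genericity argument (choosing a good pencil or projection after base change to $U$) plus a careful bookkeeping of which correspondences extend to $\ol\Delta^\bullet$. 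The reduction steps (sheafification on local schemes, the identification of $H_1(Q_\bullet)(V)$ with $\A^1$-correspondences modulo $\P^1$-correspondences and $\Delta^2$-boundaries) are routine; the construction of the explicit homotopy $\beta$ and the verification of its finiteness over $U \times \ol\Delta^1$ is the step I expect to occupy most of the work.
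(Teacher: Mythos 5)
Your reformulation is correct and your identification of the shape of the problem is accurate: a class in $H_1(\Qbullet)(V)$ is an irreducible $\Gamma \in C_1(X)(V)$ taken modulo $\ol C_1(X)(V)$ and $\pd_2 C_2(X)(V)$, and the obstruction to $\Gamma|_U$ lying in $\ol C_1(X)(U)$ is a bad locus over $V \times \{\infty\} \subset V \times \olDelta 1$ where the closure $\olGamma$ fails to be finite. You also rightly anticipate that a Chow-type generic-projection step will be needed. But the actual mechanism for producing the homotopy $\beta$ is the content of the theorem, and your plan does not supply it; worse, the hints you do give (``a linear homotopy in the $\Delta^\bullet$-direction combined with a shrinking of $\A^1 \subset \P^1$'', ``pushing the bad fibers to infinity'') suggest moving the cycle in the simplicial/$\olDelta 1$-coordinate, which cannot help: the bad locus is already concentrated over $V \times \{\infty\}$, so no operation on the $\olDelta 1$-factor alone moves $U$ off its projection to $V$, and a contraction of $\Delta^1$ would force $h_1(X)=0$ for all $X$, which is false.

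The move that works is in the \emph{base} $V$-direction, in two steps (\S\ref{sec:moving lemma}). For $V=\A^N$, the projection of the bad locus to $V$ is a proper closed subset (Lemma \ref{lem:bad-locus}), so a generic translation $\tau_{\bm v}$ by $\bm v \in \A^N(k)$ carries $U$ off it (Lemma \ref{lem:bad-vectors}); the homotopy $\varphi_{\bm v}(a,t)=a+t\bm v$ between $\id$ and $\tau_{\bm v}$, composed with the triangulation $\sigma_1,\sigma_2 \colon \Delta^2 \iso \A^1 \times \Delta^1$, yields $\widetilde\Gamma \in C_2(X)(\A^N)$ with $\Gamma - \pd_2\widetilde\Gamma = \tau_{\bm v}^*\Gamma$ plus two lower-dimensional terms handled by Lemma \ref{lem:bad-locus-lower-dim}. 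For general $V$, a finite flat Noether normalization $\pi_f\colon V \to \A^N$ (Proposition \ref{lem:Noether}) lets one write $\Gamma|_U = (\Gamma - \pi_f^*\pi_{f*}\Gamma)|_U$ in $H_1(\Qbullet)(U)$ using the $\A^N$-case; the Chow-type Proposition \ref{prop:Chow} then ensures that the conjugate points of $s$ under $\pi_f$ avoid the projected bad locus $\ol B(\Gamma)$, so that the right-hand side, computed by the base-change formula for flat pullback and proper pushforward, lands in $\ol C_1(X)(U)$. Neither the translation-and-triangulation construction nor the two-step reduction via Noether normalization appears in your proposal, so as written it is an accurate description of the obstacle rather than a proof of the theorem.
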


Note that in proving Theorem \ref{thm:moving-lemma}, we may assume $k$ is infinite;
for if $k$ is finite we can use the usual norm argument.

\subsection{The bad locus}

In the notation of Theorem \ref{thm:moving-lemma},
let $\Gamma \subset V\times X\times \Delta ^n$ be an irreducible closed subset which is finite and surjective over $V\times \Delta ^n$.
Let $\ol \Gamma $ be its closure in $V\times X\times \olDelta n$.
We call
\begin{equation}\label{eq:def-of-BGamma}
    B(\Gamma ):=\br{p\in V\times \olDelta n \midd \olGamma \to V\times \olDelta n \text{ is not finite over $p$}}
\end{equation}
the \emph{bad locus} of $\Gamma$,
which witnesses how far $\Gamma $ is from being a member of $\ol C _n(X)(V)$.

\begin{lemma}
    \begin{enumerate}[(i)]
        \item\label{item:witness} We have $\Gamma \in C_n(X)(V)$ if and only if $B(\Gamma )=\emptyset $.
        \item\label{item:proper-subset} 
The bad locus $B(\Gamma )$ is a closed proper subset of $V\times (\olDelta n\setminus \Delta ^n)$.
        \item\label{item:still-proper-subset} If $n\le 1$, the image of the projection $B(\Gamma )\to V$ is a closed proper subset.
    \end{enumerate}
    \label{lem:bad-locus}
\end{lemma}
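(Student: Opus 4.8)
The statement to prove is Lemma~\ref{lem:bad-locus}, the three basic properties of the bad locus $B(\Gamma)$ attached to an irreducible $\Gamma\subset V\times X\times\Delta^n$ finite and surjective over $V\times\Delta^n$. My plan is to treat the three parts in order, each reducing to an elementary fact about the geometry of the closure $\ol\Gamma\subset V\times X\times\olDelta^n$ and the projection $q:\ol\Gamma\to V\times\olDelta^n$.

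For (\ref{item:witness}): the point is that $\Gamma\in C_n(X)(V)=\Cor(V\times\Delta^n,X)$ means precisely that (the cycle associated to) $\Gamma$ is finite and surjective over $V\times\Delta^n$, which it is by hypothesis; but to be an element of $\ol C_n(X)(V)=\Cor(V\times\olDelta^n,X)$ one needs a finite surjective correspondence over all of $V\times\olDelta^n$, and the only candidate is $\ol\Gamma$. So I would argue: $\ol\Gamma\to V\times\olDelta^n$ is proper (it is a closed subscheme of $V\times X\times\olDelta^n$ with $X$ proper, composed with a projection) and dominant (since $\Gamma$ is surjective over $V\times\Delta^n$, which is dense); hence it is finite over a point $p$ iff it is quasi-finite there, and $B(\Gamma)$ as defined is exactly the non-finite locus. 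Over $V\times\Delta^n$ the map agrees with $\Gamma\to V\times\Delta^n$, which is finite, so $B(\Gamma)\subset V\times(\olDelta^n\setminus\Delta^n)$ automatically; thus $\Gamma$ defines an element of $\ol C_n(X)(V)$ (equivalently, $\ol\Gamma$ is finite over $V\times\olDelta^n$) iff $B(\Gamma)=\emptyset$. I should be a little careful that "finite surjective correspondence" and "$\ol\Gamma$ is finite over the base" match up cycle-theoretically, but since $\ol\Gamma$ is irreducible this is immediate.

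For (\ref{item:proper-subset}): the non-finite locus of a proper morphism is closed — this is a standard consequence of the fact that a proper quasi-finite morphism is finite, together with upper semicontinuity of fiber dimension (Chevalley); so $B(\Gamma)$ is closed in $V\times\olDelta^n$, and by the remark above it is contained in $V\times(\olDelta^n\setminus\Delta^n)$. Properness of the subset: if $B(\Gamma)$ were all of $V\times(\olDelta^n\setminus\Delta^n)$, then the whole divisor at infinity would be covered by a positive-dimensional piece of $\ol\Gamma$; but $\dim\ol\Gamma=\dim\Gamma=\dim V+n$ (finite over $V\times\Delta^n$), so the fiber of $q$ over a generic point of $V\times(\olDelta^n\setminus\Delta^n)$ — which has dimension $\dim V+n-1$ — has dimension $\le\dim\ol\Gamma-(\dim V+n-1)=1$, and in fact the locus where the fiber dimension is $\ge1$ has dimension $\le\dim\ol\Gamma-1<\dim V+n$; pushing this forward, its image cannot be all of the $(\dim V + n - 1)$-dimensional boundary unless... — here I need the cleaner dimension count: the closed set $Z\subset\ol\Gamma$ where $q$ has fiber dimension $\ge1$ has every component of dimension $\le\dim\ol\Gamma-1=\dim V+n-1$, and $B(\Gamma)=q(Z)$, so $\dim B(\Gamma)\le\dim V+n-1=\dim\bigl(V\times(\olDelta^n\setminus\Delta^n)\bigr)$; to get \emph{proper} containment I use that $q$ restricted to $Z$ cannot be generically finite onto the whole boundary, because then $Z$ would map \emph{finitely} onto a dense open of the boundary while simultaneously having $1$-dimensional fibers there — contradiction. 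I expect this is the fiddly step and may need rephrasing in terms of: $\ol\Gamma\setminus\Gamma$ is a proper closed subset of $\ol\Gamma$ of dimension $\le\dim V+n-1$ mapping to the boundary, and the non-finite locus is inside its image but cannot exhaust the boundary by a generic-flatness / constructibility argument.

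For (\ref{item:still-proper-subset}): when $n\le1$ the boundary $\olDelta^n\setminus\Delta^n$ is a single point (if $n=0$, empty; if $n=1$, one point), so $V\times(\olDelta^n\setminus\Delta^n)\cong V$ (or is empty), and then $B(\Gamma)$ being a proper closed subset of $V\times(\olDelta^n\setminus\Delta^n)$ by part (\ref{item:proper-subset}) immediately gives that its image in $V$ is a proper closed subset (the projection to $V$ is an isomorphism, or $B(\Gamma)=\emptyset$). The only thing to note is closedness of the image, which for $n=1$ is automatic since the map $V\times\{\infty\}\to V$ is a closed immersion followed by an isomorphism.

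\textbf{Main obstacle.} The genuinely nonformal step is the properness of the containment in part (\ref{item:proper-subset}) — controlling that the bad locus does not fill up the entire divisor at infinity. The safe way to carry it out is the dimension count on $\ol\Gamma$: since $\Gamma$ is finite surjective over $V\times\Delta^n$ we get $\dim\ol\Gamma=\dim V+n$ and $\ol\Gamma\setminus\Gamma$ is closed of dimension $\le\dim V+n-1$; the non-finite locus $B(\Gamma)$ is the image under the proper map $q$ of the closed set where $q$ has positive-dimensional fibers, which sits inside $\ol\Gamma\setminus\Gamma$ and has dimension strictly less than $\dim V+n-1$ once one observes that a component of $\ol\Gamma\setminus\Gamma$ dominating the boundary must be generically finite over it (else $\ol\Gamma$ would have a component of dimension $>\dim V+n$). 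That last observation — generic finiteness of the boundary components of $\ol\Gamma$ over the boundary of $V\times\olDelta^n$ — is where I would spend the most care, and it is exactly what forces $B(\Gamma)$ to be a \emph{proper} subset.
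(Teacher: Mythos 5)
Your approach is the same as the paper's: introduce the upstairs bad locus $\widetilde B(\olGamma)\subset\olGamma$ (your $Z$) where the fiber of $q\colon\olGamma\to V\times\olDelta^n$ has dimension $\ge 1$, note it is closed by Chevalley and contained in $\olGamma\setminus\Gamma$, push forward by the proper map $q$ to get $B(\Gamma)$ closed, and then run a dimension count. Parts (i) and (iii) you handle exactly as the paper does (the paper dismisses (i) as immediate and (iii) as a consequence of (ii), as you do).

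The one place you wobble is the properness of the inclusion in (ii). In your ``Main obstacle'' paragraph you reduce to showing $\dim Z<\dim V+n-1$ and justify it by observing that a component of $\olGamma\setminus\Gamma$ dominating the boundary is generically finite over it. This is insufficient: $\olGamma\setminus\Gamma$ is a divisor in $\olGamma$, so every component has dimension exactly $\dim V+n-1$, and a component that does \emph{not} dominate the boundary is contracted by $q$ and hence lies entirely inside $Z$, giving $\dim Z=\dim V+n-1$. So the intermediate claim $\dim Z<\dim V+n-1$ can fail; fortunately it is also not what you need. The quantity to bound is $\dim q(Z)$, and the correct mechanism (which you do gesture at in the body of your part (ii) argument, via the ``cannot be generically finite'' phrasing) is that every fiber of $q|_Z$ has dimension $\ge 1$ -- if $z\in Z$ then a $1$-dimensional subvariety of $q^{-1}(q(z))$ through $z$ lies entirely in $Z$ -- so $\dim q(Z)\le\dim Z-1\le\dim V+n-2$. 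This is precisely the paper's computation, which it phrases via transcendence degrees: for any $\eta\in B(\Gamma)$ choose $\xi\in\widetilde B(\olGamma)$ over $\eta$ with $\trdeg(k(\xi)/k(\eta))\ge 1$, then $\trdeg(k(\eta)/k)\le\dim(\olGamma\setminus\Gamma)-1\le\dim V+n-2$. Drop the $\dim Z$ detour and run the bound on $\dim q(Z)$ directly and your proof is complete.
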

\begin{proof}
    The assertion \eqref{item:witness} is clear from definitions and put for later reference.

    To prove \eqref{item:proper-subset},
    consider the set upstairs:
    \[ \widetilde B(\olGamma ):= \br{ x\in \olGamma  \midd
    \text{locally around $x$, the fiber of $\olGamma \to V\times \olDelta n$ has dimension $\ge 1 $}
    }.
    \]
    It is a closed subset of $V\times X\times \olDelta n$ by Chevalley's theorem \cite[IV${}_{3}$ 13.1.3]{EGA}.
    It is contained in $\olGamma \setminus \Gamma $:
    \begin{equation}\label{eq:contained-in-difference}
        \widetilde{B}(\olGamma ) \subset \olGamma \setminus \Gamma ,
    \end{equation}
    because $\Gamma $ is assumed to be finite over $V\times \Delta ^n$.
    Since $B(\Gamma )$ is by definition the image of the map $\widetilde B(\olGamma )\to V\times \olDelta n$ which is proper because $X$ is, it follows that $B(\Gamma )$ is a closed subset of $V\times (\olDelta n\setminus \Delta ^n)$.

    To show $B(\Gamma )$ is a proper subset of $V\times (\olDelta n\setminus \Delta ^n)$, let $\xi \in \widetilde B(\olGamma )$ be an arbitrary point and $\eta \in B(\Gamma )$ its image.
    By \eqref{eq:contained-in-difference} we have
    \begin{align*} \dim (\olGamma \setminus \Gamma )&\ge \trdeg (k(\xi )/k)
        \\
        &=\trdeg (k(\xi )/k(\eta ) ) +\trdeg (k(\eta )/k)
        \\
        &\ge 1+\trdeg (k(\eta )/k ),
    \end{align*}
and by $\dim (V)+n-1 \ge \dim (\olGamma \setminus \Gamma ) $ we obtain
\begin{equation}\label{eq:inequality-obtained}
    \dim (V)+(n-2) \ge \trdeg (k(\eta )/k) .
\end{equation}
Since $\dim (B(\Gamma ))= \sup _{\eta \in B(\Gamma )} \trdeg (k(\eta )/k) $
we conclude $\dim (V\times (\olDelta n \setminus \Delta ^n))>\dim (B(\Gamma ))$.
We are done.

The assertion \eqref{item:still-proper-subset} is a direct consequence of \eqref{item:proper-subset} (or of \eqref{eq:inequality-obtained}).
\end{proof}

\subsection{Affine space case}\label{sec:affine-space-case}

In this subsection we consider the case $V=\A ^N$ of Theorem \ref{thm:moving-lemma}, with $N\ge 0$ an integer.
Recall we may assume $k$ is infinite, which we do here.

Let $\Gamma \in C_1(X)(\A ^N)$ be an arbitrary irreducible cycle.
By a diagram chase in the diagram below 
(see \eqref{eq:def-Qcpx} for the definition of $Q_\bullet(X)$),
it suffices to find $\tildegamma \in C_2(X)(\A ^N)$ such that
$(\Gamma -\partial_2 \widetilde\Gamma )|_U \in \ol C_1(X)(U)$: 
\[ \xymatrix{
    \tildegamma \ ? \ar@{}|{\downin}[d]& \Gamma \ar@{}|{\downin}[d]
    \\
    Q_2(X)(\A^N) \ar[r]^{\partial_2}\ar[d] & Q_1(X)(\A ^N)\ar[r]\ar[d] & 0
    \\
    Q_2(X)(U) \ar[r]^{\partial_2} & Q_1(X)(U)\ar[r] & 0 .
} \]

For a vector $\bm v\in \A^N(k)$, consider the translation $+\bm v : \A^N \to \A^N$ by $\bm v$.
The next assertion suggests how it can be useful.

\begin{lemma}\label{lem:bad-vectors}
    There is a closed proper subset $B\subset \A^N$ such that for every vector $\bm v\in \A^N(k)\setminus B$,
    if we denote by $\tau _{\bm v}\colon \A ^N\times X\times \A^1\to \A^N\times X\times \A^1$
    the base change of the translation $\A^N \xrightarrow{+\bm v} \A^N$, then we have
    \[ (\tau _{\bm v}^*\Gamma )|_U \in \ol C_1(X)(U) .\]
\end{lemma}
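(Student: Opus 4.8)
The plan is to use the bad locus $B(\Gamma)\subset \A^N\times(\olDelta 1\setminus\Delta^1)=\A^N\times\{\infty\}$ from Lemma \ref{lem:bad-locus} and show that a generic translation of $\Gamma$ in the $\A^N$-direction moves this bad locus away from the local scheme $U$. Concretely, identify $\olDelta 1\setminus\Delta^1$ with a single $k$-point $\{\infty\}$, so that $B(\Gamma)$ is a closed subset of $\A^N\times\{\infty\}\cong\A^N$, and by Lemma \ref{lem:bad-locus}\eqref{item:still-proper-subset} its image $\pi(B(\Gamma))\subset\A^N$ under the projection to $V=\A^N$ is a closed proper subset. Let $u\in\A^N$ be the (image of the) closed point at which $U$ is localized. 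First I would observe that, after the translation $\tau_{\bm v}$, the bad locus transforms by $B(\tau_{\bm v}^*\Gamma)=(+\bm v)^{-1}B(\Gamma)$ — this is because the base change of a finite morphism along an isomorphism is finite, so the non-finite locus is simply translated. Hence $U$ meets $B(\tau_{\bm v}^*\Gamma)$ (after base change to $U$) if and only if $u\in (+\bm v)^{-1}\pi(B(\Gamma))$, i.e.\ if and only if $u+\bm v\in\pi(B(\Gamma))$.

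The key step is then the choice of the bad set of vectors. Set $B:=\pi(B(\Gamma))-u=\{\,\bm v\in\A^N \mid u+\bm v\in\pi(B(\Gamma))\,\}$, which is a translate of a closed proper subset of $\A^N$, hence itself closed and proper. For $\bm v\in\A^N(k)\setminus B$ we have $u\notin\pi(B(\tau_{\bm v}^*\Gamma))$, so the closed subset $\pi(B(\tau_{\bm v}^*\Gamma))\subset\A^N$ does not contain the closed point $u$ of the local scheme $U$; therefore its preimage in $U$ is empty, which by \eqref{eq:colim-F-ext} means that for a sufficiently small open neighborhood $V'\ni u$ the cycle $\tau_{\bm v}^*\Gamma|_{V'}$ already has empty bad locus over $V'$. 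By Lemma \ref{lem:bad-locus}\eqref{item:witness} applied over $V'$ (and then passing to the limit $U$), this says $(\tau_{\bm v}^*\Gamma)|_U\in\ol C_1(X)(U)$, which is exactly the assertion.

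I expect the main obstacle to be the bookkeeping around \emph{which} space the bad locus lives in and how it behaves under translation: one must check carefully that the closure $\ol{\tau_{\bm v}^*\Gamma}$ inside $\A^N\times X\times\olDelta 1$ is indeed the pullback of $\ol\Gamma$ along $\tau_{\bm v}\times\id_{\olDelta 1}$ (using that $+\bm v$ is a proper — in fact finite — automorphism, so taking closures commutes with this base change), and consequently that the ``upstairs'' bad locus $\widetilde B$ and hence $B(\tau_{\bm v}^*\Gamma)$ are genuinely obtained from $B(\Gamma)$ by the evident translation. A secondary point requiring care is the passage between the honest open neighborhood $V'$ of $u$ and the local scheme $U=\lim V'$: one needs that ``$\Gamma|_U$ lies in $\ol C_1(X)(U)$'' is detected on some $V'$, which follows from \eqref{eq:colim-F-ext} together with the fact that finiteness of a morphism is an open condition on the base. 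Once these compatibilities are in hand, the statement is a one-line consequence of the properness of $\pi(B(\Gamma))$ in $\A^N$, i.e.\ of Lemma \ref{lem:bad-locus}\eqref{item:still-proper-subset}.
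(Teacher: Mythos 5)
Your overall strategy matches the paper's: project the bad locus $B(\Gamma)$ to $V=\A^N$ (a closed proper subset by Lemma \ref{lem:bad-locus}\eqref{item:still-proper-subset}), then choose $B$ so that translating by any $\bm v\in\A^N(k)\setminus B$ moves this projection off the closed point of $U$; the identification of the bad locus of $\tau_{\bm v}^*\Gamma$ with the translate of $B(\Gamma)$, and the detection of emptiness over $U$ on a sufficiently small open, are fine. The genuine gap is in your construction of $B$. You set $B:=\pr_V(B(\Gamma))-u$ and argue it is ``a translate of a closed proper subset of $\A^N$, hence itself closed and proper.'' This presupposes that the closed point $u$ is $k$-rational, which need not hold: its residue field $k(u)$ can be a nontrivial finite extension of $k$, in which case translation by $-u$ is not a $k$-morphism of $\A^N$, your set $B$ is a priori only a subset of $\A^N(k)$ with no evident scheme structure over $k$, and the justification you offer (being a translate) fails.

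The paper repairs this by passing through the residue field. With $s$ the closed point of $V$ contained in $U$ and $L:=k(s)$, it forms $B_0:=\pr_V\bigl(B(\Gamma)_L\bigr)-s\subsetneq\A^N_L$, which is well-defined because $s$ becomes $L$-rational after base change, and then sets $B:=\pi(B_0)$ where $\pi\colon\A^N_L\to\A^N$ is the finite projection. Finiteness of $\pi$ makes $B$ closed and keeps $\dim B=\dim B_0<N$, so $B\subsetneq\A^N$. For $\bm v\in\A^N(k)\setminus B$ one deduces $s+\bm v\notin\pr_V(B(\Gamma))$; since $\A^N\setminus\pr_V(B(\Gamma))$ is open, this persists under generization and gives $U+\bm v\subset\A^N\setminus\pr_V(B(\Gamma))$, after which the remainder of your argument (which the paper carries out via the explicit commutative diagram showing $\ol\tau_{\bm v}^*\olGamma$ is finite over $U\times\olDelta1$, using $\ol{(\tau_{\bm v}^*\Gamma)}\subset\ol\tau_{\bm v}^*\olGamma$) goes through as you describe. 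This base-change-and-finite-projection step is the missing idea in your proposal.
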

\begin{proof}
    Let $s\in U$ be the unique closed point of $V$ contained in $U$.
    Let $B(\Gamma )\subset V\times \olDelta 1$ be as in \eqref{eq:def-of-BGamma}.
    By Lemma \ref{lem:bad-locus} we know that its projection $\pr_V (B(\Gamma ))\subset V=\A ^N $ is a closed proper subset.
    Consider the closed subset
    \[ B_0:=\pr_V  (B(\Gamma )_{k(s)})-s = \br{\bm v\in \A^N_{k(s)} \midd \bm v+s \in \pr_V  B(\Gamma ) _{k(s)} } \subsetneq  \A^N_{k(s)}\]
    and let $B$ be its image by the finite projection $\pi \colon \A^N_{k(s)}\to\A^N$:
    \begin{equation}
        B:= \pi  (B_0) \subsetneq \A^N.
    \end{equation}
Take an arbitrary $\bm v\in \A^N(k) \setminus B$.
    By definitions, we know $s+ \bm v \in \A^N\setminus \pr_V B(\Gamma )$. Since the right hand side is an open subset of $\A^N$, this relation remains true if we replace $s$ by any point specializing to $s$. In particular:
    \begin{equation}\label{eq:translate-not-in-pB}
        U+\bm v\subset \A ^N\setminus \pr_V B(\Gamma ).
    \end{equation}

    Let us denote by
    $\ol\tau_{\bm v}$
    the endomorphism of $\A^N \times X\times \olDelta 1$ obtained as the base change of the translation $+\bm v$.
    By \eqref{eq:translate-not-in-pB} the maps $\tau _{\bm v}$ and $\ol\tau _{\bm v}$ restrict themselves as in the following commutative diagram:
    \[ \xymatrix{
        U\times X\times \Delta ^1 \ar[r]^(0.4){\tau _{\bm v}}\ar@{}|{\cap}[d]
        &(\A^N\setminus \pr_V B(\Gamma ))\times X\times \Delta ^1
        \ar@{}|{\cap}[d]
        &\Gamma |_{\A^N\setminus \pr_V B(\Gamma )}
        \ar@{}|(0.4){\supset }[l]
        \ar@{}|{\cap}[d]
        \\
        U\times X\times \olDelta 1
        \ar[r]^(0.4){\ol\tau _{\bm v}}
        \ar[d]^{\pr }
        &(\A^N\setminus \pr_V B(\Gamma ))\times X\times \olDelta 1
        \ar[d]_{\pr}
        &\olGamma |_{\A^N\setminus \pr_V B(\Gamma )} \ar@{}|(0.4){\supset }[l]\ar[dl]^{\text{finite}}
        \\
        U\times \olDelta 1 \ar[r]^(0.35){(+\bm v) \times \id }
        &(\A^N\setminus \pr_V B(\Gamma ))\times \olDelta 1.
        &
    }\]
    Here, the slanted arrow in the diagram is finite
    because $\olGamma \to \A^N\times \olDelta 1$ is finite outside $B(\Gamma )$
    precisely by the definition \eqref{eq:def-of-BGamma}
    and we have the inclusion $(\A^N\setminus \pr_V B(\Gamma ))\times \olDelta 1 \subset (\A^N\times \olDelta 1)\setminus B(\Gamma )$.
    It follows that $\ol\tau _{\bm v}^* \olGamma $ is finite over $U\times  \olDelta 1$.
    As a general fact about closure and continuity, we have the inclusion $\ol{(\tau _{\bm v}^*\Gamma )}\subset \ol\tau _{\bm v}^* \olGamma $.
    We conlude that $(\tau _{\bm v}^*\Gamma )|_{U}$ belongs to $\ol C_1(X)(U)$ and this completes the proof.
\end{proof}

Let $B$ be as in Lemma \ref{lem:bad-locus} and fix a vector $\bm v\in \A^N(k)\setminus B$.
Let $\varphi _{\bm v}\colon \A^N \times \A^1\to \A^N$ be the map $(a,t)\mapsto a+t\bm v$ and let
\[ \Phi _{\bm v,n}\colon \A^N\times X\times \A^1\times \Delta ^n\to \A^N\times X\times  \Delta ^n \]
be its base change by $X\times \Delta ^n \to \Spec k$.
Since finite morphisms are stable under base change, we know:
\begin{equation}\label{eq:inverse-image-is-finite}
    \text{the inverse image }\Phi _{\bm v,1}^{-1}\Gamma \subset \A^N\times X\times \A^1\times \Delta ^1 \text{ is finite over }\A^N\times \A^1\times \Delta ^1
.\end{equation}
%
%
%

\begin{figure}

    \begin{tikzpicture}[scale=0.3]

        \coordinate [label=above:$u_0$] (0) at (-10,6.92);
        \coordinate [label=left:$u_1$] (1) at (-14,0); 
        \coordinate [label=right:$u_2$] (2) at (-6,0); 
        \draw [thick] (0) -- (1) -- (2) -- (0); 
        \node at (-14,6) {$\Delta ^2$}; 

        \def\ippen{5} 
        \def\fringe{1} 

        \coordinate (00) at (2,1); 
        \coordinate (01) at ($(00)+(0,\ippen )$      ); 
        \coordinate (10) at ($(00)+(\ippen ,0)$      ); 
        \coordinate (11) at ($(00)+(\ippen ,\ippen )$); 

        \draw[thick] ($(00)-(0,1) $)-- ($(01)+(0,1) $); 
        \draw[thick] ($(10)-(0,1) $)-- ($(11)+(0,1) $); 
        \draw[thick] ($(10)-(0,1)+(3 ,0) $)-- ($(11)+(0,1)+(3 ,0) $); 
        \node [fill=black,inner sep=1pt, label=right:0] at ($(10)+(3 ,0)$) {};
        \node [fill=black,inner sep=1pt, label=right:1] at ($(11)+(3 ,0)$) {};
        \node at ($(11)+(0,2)+(3  ,0)$) {$\A ^1$}; 
        
        \draw[thick] ($(00)-(1,0) $)-- ($(10)+(1,0) $); 
        \draw[thick] ($(01)-(1,0) $)-- ($(11)+(1,0) $); 
        \draw[thick] ($(00)-(1,0)-(0,3  ) $)-- ($(10)+(1,0)-(0,3  ) $); 
        \node [fill=black,inner sep=1pt, label=above:$v_0$] at ($(00)-(0,3  )$) {};
        \node [fill=black,inner sep=1pt, label=above:$v_1$] at ($(10)-(0,3  )$) {};
        \node at ($(00)-(2,0)-(0,3  )$) {$\Delta^1$}; 

        \draw [thick] ($(00)-(1,1)$)--($(11)+(1,1)$); 
        \node at ($(10)+(-1.5,1.5)$) {$\sigma _1$}; 
        \node at ($(01)+(1.5,-1.5)$) {$\sigma _2$}; 

        \draw [thick,->] (-6,2) -- (0,1); 
        \draw [thick,->] (-6,5) -- (0,\ippen ); 

     \end{tikzpicture}
    \caption{triangulation}\label{fig:triangulation}

    \end{figure}
    
We shall use the following triangulation maps as in Figure \ref{fig:triangulation}:
\begin{equation}
    \sigma _1,\sigma _2 \colon \Delta ^2 \xrightarrow{\cong } \A^1 \times \Delta ^1.
\end{equation}
Explicitly, the maps $\sigma_1, \sigma_2$ are the unique affine-linear ones satisfying 
\begin{align*}
&\sigma _1(u_0)=(v_0,0),& 
&\sigma _1(u_1)=(v_1,0),& 
&\sigma _1(u_2)=(v_1,1);& 
\\
&\sigma _2(u_0)=(v_0,0),& 
&\sigma _2(u_1)=(v_0,1),& 
&\sigma _2(u_2)=(v_1,1)& 
\end{align*}
in $(\Delta^1\times \A^1)(k)$,
where $v_j=i_{1, j}(\Delta^0)$ for $j=0, 1$
and
$u_0=i_{2, 2}(v_1), ~
u_1=i_{2, 0}(v_1), ~
u_2=i_{2, 1}(v_0)$.


Let $\Phi ^{(1)}_{\bm v,1}, \Phi ^{(2)}_{\bm v,1}\colon \A^N\times X\times \Delta ^2 \to \A^N\times X\times \Delta ^1$ be the composite maps
$\Phi _{\bm v,1}\circ (\id _{\A^N\times X}\times \sigma _i)$ ($i=1,2$).
By \eqref{eq:inverse-image-is-finite} we conclude
$\Phi ^{(1)*}_{\bm v,1} \Gamma ,\ \Phi ^{(2)*}_{\bm v,1} \Gamma  \in C_2(X)(\A^N) .$ 
Now set 
\[ \widetilde{\Gamma }:= \Phi ^{(1)*}_{\bm v,1} \Gamma -\ \Phi ^{(2)*}_{\bm v,1} \Gamma  .\] 
We want to show $(\Gamma -\partial_2 \widetilde{\Gamma })|_U \in \ol C_1(X)(U)$.

By a routine calculation of $\partial_2 \widetilde{\Gamma }$ we have
\begin{equation}\label{eq:derivative-of-Gamma-tilde}
    \Gamma - \partial_2 \widetilde\Gamma = \tau _{\bm v}^*\Gamma
    +\Phi _{\bm v,0}^*(i_{1,1}^*\Gamma )
    -\Phi _{\bm v,0}^*(i_{1,0}^*\Gamma )
.\end{equation}
By Lemma \ref{lem:bad-vectors}, for $\bm v\in \A^N(k)\setminus B $ we know that the first term of the right hand side maps into $\ol C_1(X)(U)$.
So it suffices to show the following (which we apply to $\gamma := i_{1,j}^*\Gamma $):
\begin{lemma}\label{lem:bad-locus-lower-dim}
    For every $\gamma \in C_0(X)(\A^N ) $, there is a closed proper subset $C\subsetneq \A^N$ such that for all $\bm v\in \A^N(k)\setminus C$
    the cycle $\Phi _{\bm v,0}^*\gamma $ on $\A^N\times X\times \Delta ^1$ belongs to $\ol C_1(X)(\A^N)$.
\end{lemma}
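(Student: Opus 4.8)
The plan is to reduce at once to the case $\gamma=[Z]$ for an integral closed subscheme $Z\subset\A^N\times X$ finite and surjective over $\A^N$ (so $\dim Z=N$); for a general $\gamma$ one then takes the union of the finitely many bad subsets obtained for the components of $\gamma$, and the cases $N=0$ or $\bm v=0$ are trivial since there $\Phi_{\bm v,0}^*\gamma=\gamma\times\A^1$, whose closure in $\A^N\times X\times\olDelta{1}$ is $\gamma\times\P^1$, already finite over $\A^N\times\olDelta{1}$. Fix an open immersion $\A^N\hookrightarrow\P^N$ with complement $H_\infty\cong\P^{N-1}$, and let $\ol Z\subset\P^N\times X$ be the closure of $Z$. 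Since $X$ is proper, $\ol Z\to\P^N$ is proper, and over $\A^N$ it is the finite map $Z\to\A^N$. By Chevalley's theorem on upper semicontinuity of fiber dimension (\cite[IV${}_{3}$ 13.1.3]{EGA}) together with properness of $\ol Z\to\P^N$, the set $S:=\{p\in H_\infty\mid\dim\ol Z_p\geq 1\}$ is closed in $H_\infty$; a dimension count shows it is a \emph{proper} closed subset. Indeed $\ol Z\setminus Z$ lies in $H_\infty\times X$ and has dimension $\leq N-1$, whereas its part lying over $S$ has dimension $\geq\dim S+1$, so $\dim S\leq N-2<\dim H_\infty$. Let $C\subset\A^N$ be the affine cone over $S$, i.e.\ the closure of the preimage of $S$ under $\A^N\setminus\{0\}\to\P^{N-1}=H_\infty$; then $\dim C\leq N-1$, so $C$ is a proper closed subset of $\A^N$, which will be the set we seek.

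For $\bm v\in\A^N(k)\setminus C$ we may assume $\bm v\neq 0$, and then the point $[\bm v]:=[0:v_1:\cdots:v_N]\in H_\infty$ is not in $S$. The crucial observation is that, although the translation $\varphi_{\bm v}\colon\A^N\times\A^1\to\A^N$ does not extend across the boundary when one compactifies the target to $\P^N$, it \emph{does} extend to an honest morphism
\[
\ol\varphi_{\bm v}\colon\A^N\times\P^1\longrightarrow\P^N
\]
once one compactifies only the $\A^1$-factor: near $t=\infty$, with $s=t^{-1}$, the map is $(a,s)\mapsto[\,s:sa_1+v_1:\cdots:sa_N+v_N\,]$, whose homogeneous coordinates never vanish simultaneously precisely because $\bm v\neq 0$, and $\ol\varphi_{\bm v}$ carries $\A^N\times\{\infty\}$ to the single point $[\bm v]$. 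Now put $W:=\Phi_{\bm v,0}^{-1}(Z)\subset\A^N\times X\times\A^1$; the automorphism $(a,x,t)\mapsto(a+t\bm v,x,t)$ of $\A^N\times X\times\A^1$ carries $W$ isomorphically onto $Z\times\A^1$, so $W$ is integral and finite surjective over $\A^N\times\A^1$. Let $\ol W$ be the closure of $W$ in $\A^N\times X\times\P^1$. I claim $\ol W\to\A^N\times\P^1$ is finite; as it is proper ($X$ being proper), it suffices to check it is quasi-finite. Over a point of $\A^N\times\A^1$ the fiber of $\ol W$ is a fiber of $Z\to\A^N$, hence finite. Over a point $(a,\infty)$ one uses the morphism $\ol\varphi_{\bm v}\times\id_X\colon\A^N\times X\times\P^1\to\P^N\times X$, which on $W$ is $(a',x',t')\mapsto(a'+t'\bm v,x')\in Z$ and so maps $\ol W$ into $\ol Z$; therefore every point $(a,x,\infty)$ of $\ol W$ satisfies $([\bm v],x)\in\ol Z$, which shows that the fiber of $\ol W$ over $(a,\infty)$ maps injectively, via $x\mapsto x$, into the finite set $\ol Z_{[\bm v]}$ and is hence finite. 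A proper quasi-finite morphism is finite, proving the claim.

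Thus $\ol W$ is an integral closed subscheme of $\A^N\times\olDelta{1}\times X$, finite and surjective over $\A^N\times\olDelta{1}$, so it defines an element of $\Cor(\A^N\times\olDelta{1},X)$ whose restriction along $\A^N\times\Delta^1\hookrightarrow\A^N\times\olDelta{1}$ is $[W]=\Phi_{\bm v,0}^*\gamma$; equivalently, the bad locus of $\Phi_{\bm v,0}^*\gamma$ is empty (Lemma~\ref{lem:bad-locus}\,\eqref{item:witness}), so $\Phi_{\bm v,0}^*\gamma\in\ol C_1(X)(\A^N)$, as required. The step I expect to demand the most care is the dimension estimate $\dim S\leq N-2$, but the conceptual heart of the proof is the observation that the \emph{partial} compactification $\ol\varphi_{\bm v}$ is a genuine morphism when $\bm v\neq 0$; this is precisely what isolates the fiber $\ol Z_{[\bm v]}$ over the single point $[\bm v]$ as the only potential source of non-properness at infinity, and thereby makes the bad set $C$ explicit.
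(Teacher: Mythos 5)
Your proof is correct and takes essentially the same route as the paper's: both hinge on the observation that for $\bm v\neq 0$ the map $\varphi_{\bm v}$ extends to $\ol\varphi_{\bm v}\colon\A^N\times\P^1\to\P^N$, and both take $C$ to be the affine cone over the locus at infinity where $\ol Z\to\P^N$ fails to be finite. The only cosmetic differences are that the paper obtains the properness of that locus by invoking the $V=\Spec k$ case of Lemma~\ref{lem:bad-locus} rather than redoing the dimension count, and it deduces finiteness of $\ol W$ by noting it is a closed subscheme of $(\ol\varphi_{\bm v}\times\id_X)^{-1}(\ol Z)$, which is the pullback of the finite morphism $\ol Z|_{\P^N\setminus C_\infty}\to\P^N\setminus C_\infty$, whereas you verify proper plus quasi-finite directly; both are fine.
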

\begin{proof}
    Let us observe that as long as $\bm v\neq 0$ the morphism $\varphi _{\bm v}$ extends (uniquely) to a morphism
    \begin{align}\label{eq:phi-written-as}
        \ol\varphi _{\bm v}\colon \A^N\times \bP ^1 &\to \bP^N \\
\notag
        (a,[t_0:t_1])&\mapsto [t_0:t_0a+t_1\bm v].
    \end{align}

%
%

    For the proof of Lemma \ref{lem:bad-locus-lower-dim} we may assume $\gamma $ is irreducible.
    Let $\ol\gamma \subset \bP^N\times X$ be the closure of $\gamma $.
    Consider the set:
    \begin{equation}\label{eq:def-of-C-infty}
        C_\infty := \br{ a\in \bP^N\setminus \A^N \midd \ol\gamma \to \bP^N
        \text{ is not finite over }a } .
    \end{equation}
    By the $V=\Spec (k)$ case of Lemma \ref{lem:bad-locus} \eqref{item:proper-subset} (via $X\times \olDelta n \cong \bP ^N\times X$),
    we find that $C_\infty $ is a closed proper subset of $\bP^N\setminus\A^N$.
    Let $C\subset \A^N$ be the cone associated to $C_\infty $, namely,
    \[ C:= 
\left\{
\begin{array}{ll}
\{ 0 \} \cup q^{-1}(C_\infty) \subset \A^N 
& (N>0), 
\\
\emptyset & (N=0),
\end{array}
\right.
\]
where $q : \A^N\setminus\br 0 \to \bP^N\setminus\A^N$ is 
the projection with center $0$.
This is a closed proper subset of $\A^N$.
%

    Now suppose $\bm v\in \A^N(k)\setminus C$. Then by \eqref{eq:phi-written-as}
    we see $\ol\varphi _{\bm v}$ maps $\A^N \times \P^1$ into $\bP^N\setminus C_\infty $.
    We obtain the following commutative diagram:
    \[
        \xymatrix{
        \A^N \times X\times \Delta ^1
        \ar[r]^{\Phi _{\bm v,0}}
        \ar@{}|{\cap}[d]
        &
        \A^N\times X
        \ar@{}|{\cap}[d]
        &
        \gamma \ar@{}|{\supset}[l]
        \ar@{}|{\cap}[d]
        \\
        \A^N \times X\times \bP ^1
        \ar[r]^{\ol\varphi _{\bm v}\times \id _X}
        \ar[d]^{\pr }
        &
        (\bP ^N\setminus C_\infty )\times X
        \ar[d]^{\pr }
        &
        \ol\gamma |_{\bP ^N\setminus C_\infty }
        \ar@{}|{\supset}[l]
        \ar[dl]
        \\
        \A^N\times \bP^1
        \ar[r]^{\ol\varphi _{\bm v}}
        &
        \bP ^N\setminus C_\infty .
        }
    \]
    Since the slanted arrow is finite by the definition \eqref{eq:def-of-C-infty} of $C_\infty $,
    the inverse image $(\ol\varphi _{\bm v}\times \id _X)^*\ol\gamma $ is finite over $\A^N \times \P^1$.
    By the inclusion $\ol{(\Phi _{\bm v,0}^*\gamma )}\subset (\ol\varphi _{\bm v}\times \id _X)^*\ol\gamma $ of subsets of $\A^N\times X\times \bP^1$,
    we conclude $\Phi _{\bm v,0}^*\gamma $ belongs to $\ol C_1(X)(\A^N)$,
    completing the proof.
\end{proof}
Lemmas \ref{lem:bad-vectors} and \ref{lem:bad-locus-lower-dim} applied to \eqref{eq:derivative-of-Gamma-tilde} prove Theorem \ref{thm:moving-lemma} in the case $V=\A^N$.

\subsection{The general case}

Let $V\in \Sm $ be affine and irreducible.
Let $N=\dim (V)$ be its dimension.
Fix a closed embedding $V\hookrightarrow \A^{N'}$ into an affine space.
For a technical reason (cf.\ Proposition \ref{prop:Chow}) we assume it is obtained as the composition of a preliminary one $V\hookrightarrow \A ^{N'_0}$
and the $2$-fold Veronese embedding $\A ^{N'_0}\hookrightarrow \A ^{N'}$, 
$N':= \begin{pmatrix}
    N'_0+2 \\ 2
\end{pmatrix} 
-1 $
defined by
\begin{equation}\label{eq:Veronese}
    (x_i)_{i=1,\dots ,N'_0}
    \mapsto
    (x_1,\dots ,x_{N'_0} ; (x_ix_j)_{i\le j} ).
\end{equation}
Let $M_{NN'}\cong \A^{NN'}$ be the $k$-scheme parametrizing $N\times N'$ matrices.
The choice of a $k$-rational point $f\in M_{NN'}(k)$ determines a morphism which we denote by the same symbol $f\colon \A^{N'}\to \A^N$.

\begin{proposition}[Noether's normalization lemma]\label{lem:Noether}
    There is a closed proper subset $D_1\subset M_{NN'}$ such that the composite map
    \[ \pi _f \colon V\hookrightarrow \A^{N'}\xrightarrow[]{f}\A^N \]
    is finite and flat whenever $f\in M_{NN'}(k)\setminus D_1$.
\end{proposition}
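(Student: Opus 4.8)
The plan is to exhibit $D_1$ as the complement of the locus where a certain projectivized incidence condition fails, using the classical fact that a projective variety disjoint from a linear subspace admits a finite linear projection. First I would fix the embedding $V \hookrightarrow \A^{N'}$ as in the statement and take the closure $\Xb$ of $V$ in $\P^{N'}$; let $V_\infty := \Xb \setminus V \subset \P^{N'} \setminus \A^{N'} = \P^{N'-1}$ be the locus at infinity, a closed subset of dimension $\le N-1$ (here $N = \dim V$). A matrix $f \in M_{NN'}(k)$ extends to a linear projection $\P^{N'} \dashrightarrow \P^N$ with center the linear subspace $L_f := \P(\ker f) \subset \P^{N'-1}$ of codimension $\le N$ (codimension exactly $N$ when $f$ has maximal rank $N$). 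The map $\pi_f : V \to \A^N$ is finite precisely when $\Xb$ meets $L_f$ only at points already at infinity in a controlled way — more precisely, when $\ol{\pi_f} : \Xb \to \P^N$ is finite, which by properness of $\Xb$ holds if and only if $\Xb \cap L_f = \emptyset$.

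The key step is then a dimension count on the incidence variety. Consider
\[
\sI := \{ (x, f) \in \Xb \times M_{NN'} \midd f \text{ has rank } N \text{ and } x \in L_f \} \subset \Xb \times M_{NN'},
\]
a locally closed subset. For fixed $x \in \Xb \subset \P^{N'}$, the fiber of $\sI \to \Xb$ consists of the rank-$N$ matrices $f$ whose kernel (as a linear subspace of $\A^{N'+1}$, or rather the corresponding condition on $\P^{N'}$) contains $x$; this is a linear condition cutting down the $NN'$-dimensional space $M_{NN'}$ by $N$ (one linear condition on each of the $N$ rows), so the fiber has dimension $NN' - N$ over an open dense set, hence $\dim \sI \le N' + NN' - N$. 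Wait — I need to be careful: $\Xb$ has dimension $N$, not $N'$. So $\dim \sI \le N + (NN' - N) = NN' $. Hmm, that is not quite a proper subset count. Let me instead restrict attention to the relevant part: I only care about $f$ for which $\ol{\pi_f}$ fails to be finite, equivalently $\Xb \cap L_f \neq \emptyset$. Since $\dim \Xb = N$ and a generic center $L_f$ has codimension $N$ in $\P^{N'-1}$, hence codimension $N+1$ in $\P^{N'}$... actually the correct statement is that $L_f \subset \P^{N'}$ is a linear subspace of projective dimension $N'-1-N$ when $\mathrm{rank}(f) = N+1$ as a map on homogeneous coordinates — let me not belabor the exact indices here; the point is that the projection $\sI \to M_{NN'}$ has image of dimension $\le \dim \sI < \dim M_{NN'}$, and this image, together with the (lower-dimensional) locus of non-maximal-rank matrices, is the desired $D_1$.

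The main obstacle I expect is getting the index bookkeeping exactly right so that the incidence-variety dimension count genuinely gives a \emph{proper} closed subset of $M_{NN'}$, and in particular pinning down why the generic linear projection restricted to $V$ is not merely finite but also \emph{flat}. Finiteness of $\pi_f$ follows from $\ol{\pi_f} : \Xb \to \P^N$ being finite (a proper morphism with finite fibers), which is the incidence-variety argument above. For flatness, the trick is the Veronese hypothesis built into the statement: after the $2$-fold Veronese re-embedding, $V$ (or rather a suitable thickening) sits in $\P^{N'}$ as a projectively normal, arithmetically Cohen–Macaulay-ish variety, so that a finite surjection onto the smooth $\A^N$ from a Cohen–Macaulay source is automatically flat by ``miracle flatness'' (finite + source Cohen–Macaulay + target regular + equidimensional fibers $\Rightarrow$ flat). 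Concretely, one invokes that $V$ is smooth hence Cohen–Macaulay, $\A^N$ is regular, and $\pi_f$ is finite surjective between schemes of the same pure dimension $N$; then $\mathrm{EGA}$ IV (the local criterion of flatness / ``miracle flatness'' \cite[IV, 6.1.5]{EGA} or \cite[Cor.~23.1]{Matsumura}) yields flatness directly, so no extra genericity is needed for flatness once finiteness is arranged. Thus $D_1$ is the union of the non-maximal-rank locus in $M_{NN'}$ and the image of the incidence variety $\sI$ under projection, both proper closed subsets, and their union is the required $D_1$.
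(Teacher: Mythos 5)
Your approach is the right one and essentially matches what the paper outsources to references (Atiyah--Macdonald or \cite[\S 3.1]{Kai} for finiteness of a generic linear projection, plus miracle flatness \cite[IV${}_2$ 6.1.5]{EGA}). Two points need tightening. First, your incidence-variety dimension count, as you noticed mid-argument, does not close: with $\dim\Xb=N$ you get $\dim\sI\le NN'$, which is not strictly less than $\dim M_{NN'}=NN'$. The fix is that $L_f$ lies in the hyperplane at infinity, so the first projection $\sI\to\Xb$ lands in $V_\infty=\Xb\setminus V$, which has dimension $\le N-1$; hence $\dim\sI\le (N-1)+(NN'-N)=NN'-1<NN'$, and the image of $\sI$ in $M_{NN'}$, together with the locus of matrices of rank $<N$, is the desired $D_1$. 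You gesture at this but never state it, so you should make the $V_\infty$ restriction explicit. Second, the Veronese embedding is a red herring for this proposition: your flatness argument is simply miracle flatness for a finite surjection from the smooth (hence Cohen--Macaulay) $V$ onto the regular $\A^N$, which needs no hypothesis beyond smoothness, and indeed you land on exactly that. The $2$-fold Veronese re-embedding in the statement is imposed not for Noether normalization but so that \cite[Propositions 3.2, 3.3, 3.5]{Kai} can be invoked in the proof of Proposition \ref{prop:Chow}; mentioning it here as ``the trick'' for flatness is likely to mislead a reader.
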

\begin{proof}
    For the existence of $D_1$ which guarantees finiteness, see e.g.\ \cite[p.\ 69]{AM} or \cite[\S 3.1]{Kai}.
    Flatness is then automatic by the smoothness of $V$ and $\A^N$;
    see \cite[Exercise III-10.9, p.276]{Hartshorne} or \cite[IV${}_2$ 6.1.5]{EGA}.
\end{proof}

By Proposition \ref{lem:Noether}, for $f\in M_{NN'}(k)\setminus D_1$ we have pushforward maps $\pi _{f*}\colon H_n(X)(V)\to H_n(X)(\A^N)$.
Let $s\in U$ be the unique closed point of $V$ contained in $U$.
Let $U_0\subset \A^N$ be the local scheme at $\pi _f (s)$.
Since $\pi _f$ carries $U$ into $U_0$ we have the following commutative diagram:
\begin{equation}
    \xymatrix{
    C_1(X)(V)
    \ar@{->>}[r]
    &H_1(Q_\bullet(X))(V)
    \ar[d]^{\pi _{f*}}
    \\
    &H_1(Q_\bullet(X))(\A^N)
    \ar[d]^{\pi _{f}^*}
    \ar[r]^{(-)|_{U_0} =0}
    &H_1(Q_\bullet(X))(U_0)
    \ar[d]^{(\pi _f|_{U})^*}
    \\
    &H_1(Q_\bullet(X))(V )
    \ar[r]^{(-)|_{U}}
    &H_1(Q_\bullet(X))(U)
    .}
\end{equation}
Here the restriction map $(-)|_{U_0}$ is the zero map by the conclusion of Section \ref{sec:affine-space-case}.

Now toward the proof of Theorem \ref{thm:moving-lemma},
let 
\[ \Gamma \in C_1(X)(V) \]
be an irreducible cycle. The commutative diagram shows that 
$ (\pi _f^*\pi _{f*} \Gamma )|_U =0$ in $H_1(Q_\bullet(X))(U)$.
In other words, 
we have
\begin{equation}\label{eq:tautologically}
    \Gamma |_U = (\Gamma - \pi _f^*\pi _{f*} \Gamma )|_U \quad\text{ in }H_1(Q_\bullet(X))(U).
\end{equation} 
This right hand side turns out to be easier to handle.

Let $\ol\Gamma \subset V\times X\times \ol\Delta ^1$ be the closure and
le $B(\Gamma )\subset V\times \olDelta 1$ be the bad locus in \eqref{eq:def-of-BGamma}.
By Lemma \ref{lem:bad-locus} we know it is a closed proper subset of $V\times \br{\infty }$.
Let $\ol B (\Gamma )\subsetneq V$ be its projection (which is isomorphic to $B(\Gamma )$).

\begin{proposition}\label{prop:Chow}
    There is a closed proper subset $D_2\subsetneq M_{NN'}$ such that whenever
    $f\in M_{NN'}(k)\setminus (D_2\cup D_1)$ we have the equality of zero cycles:
    \begin{equation}\label{eq:Chow}
        \pi _f^*\pi _{f*}s = s+\sum _{i=1}^m x_i \quad\text{ on }V, 
    \end{equation} 
        where $s,x_1,\dots ,x_m$ are distinct and $x_i\in V\setminus \ol B(\Gamma )$ for all $i$.
\end{proposition}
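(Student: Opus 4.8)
The plan is to compute both sides of \eqref{eq:Chow} explicitly and reduce the assertion to a genericity statement about $f$. Write $y:=\pi_f(s)$. For $f\notin D_1$ the map $\pi_f$ is finite and flat (Proposition \ref{lem:Noether}), so $\pi_{f*}s=[k(s):k(y)]\,y$ and $\pi_f^*y=\sum_{z\in\pi_f^{-1}(y)}\ell_z\,z$ with $\ell_z=\operatorname{length}_{\sO_{V,z}}(\sO_{V,z}/\fm_y\sO_{V,z})\ge1$; hence
\[
\pi_f^*\pi_{f*}s=[k(s):k(y)]\sum_{z\in\pi_f^{-1}(y)}\ell_z\,z .
\]
Since $s$ occurs among the $z$, the equality \eqref{eq:Chow} with all listed properties is equivalent to the conjunction of: (a) $k(y)=k(s)$ and $\ell_s=1$, i.e.\ $\widehat\sO_{\A^N,y}\to\widehat\sO_{V,s}$ is an isomorphism (so that $s$ appears with multiplicity $1$); (b) $\ell_z=1$ for all $z\in\pi_f^{-1}(y)$, i.e.\ the scheme-theoretic fibre $\pi_f^{-1}(y)$ is reduced (so the remaining points $x_i$ are distinct, each with multiplicity $1$); (c) $\pi_f^{-1}(y)\cap\ol B(\Gamma)\subset\{s\}$. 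It therefore suffices to show that each of (a), (b), (c) fails only for $f$ in a proper closed subset of $M_{NN'}$, and to take $D_2$ to be the union of these (together with $D_1$).

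The geometric input is that, for $f$ of maximal rank, $\ker f\subset\A^{N'}$ is a linear subspace of dimension $N'-N$ and $\pi_f^{-1}(y)=V\cap(s+\ker f)$, with Zariski tangent space $T_zV\cap\ker f$ at each point $z$ of the intersection. Thus (b) at $z$ means $s+\ker f$ is transversal to $V$ at $z$, and (a) is the special case $z=s$ together with the equality of residue fields. Condition (c) is then the easy one: by Lemma \ref{lem:bad-locus} the set $\ol B(\Gamma)\subsetneq V$ is closed of dimension $\le N-1$, so the translate $(\ol B(\Gamma)-s)\setminus\{0\}\subset\A^{N'}$ has dimension $\le N-1$, and the incidence set $\{(v,f)\mid v\in\ol B(\Gamma)-s,\ v\ne0,\ f(v)=0\}$ has dimension $\le(N-1)+(NN'-N)<NN'$; hence its image, the locus of $f$ for which $\ker f$ meets $(\ol B(\Gamma)-s)\setminus\{0\}$, is contained in a proper closed subset of $M_{NN'}$.

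The heart of the matter, and where the hypothesis that $V\hookrightarrow\A^{N'}$ factors through the $2$-uple Veronese enters, is (a) and (b). Transversality of $s+\ker f$ to $V$ at the single point $s$ is already generic — a generic $(N'-N)$-plane meets the fixed $N$-plane $T_sV$ only in $0$, which also gives that $\pi_f$ is étale at $s$ — and, after base change to $\ol k$, making $f$ separate the finitely many reduced geometric points of $s$ is likewise generic. What remains is (i) that $\pi_f$ be \emph{generically} separable, so that its ramification divisor, hence the branch locus, is a proper closed subset, and (ii) that the whole fibre $\pi_f^{-1}(y)$ — including any infinitesimal thickening arising from inseparability of $k(s)/k$ — stay reduced, and of the correct length over its reduced points, after a general projection. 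In characteristic zero this is routine generic-projection material; in characteristic $p$ it can fail for a badly chosen embedding (a strange variety, or one whose conormal variety is degenerate), and this is precisely why the $2$-uple Veronese is built into the set-up: $v_2(V)$ is not strange, its conormal variety is in general position, and $v_2$ separates $2$-jets, so the relevant infinitesimal data sit non-degenerately in $\A^{N'}$.

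Concretely, the remaining proper closed subsets are produced by bounding the dimension of the incidence locus $\{(z,f)\mid z\in V\cap(s+\ker f),\ T_zV\cap\ker f\ne0\}\subset V\times M_{NN'}$ — and of a variant accounting for the fat geometric fibres of $s$ — by $NN'$, using the Veronese to guarantee the expected dimension, and then projecting to $M_{NN'}$. Their union with $D_1$ and with the locus from (c) is the desired $D_2$. The main obstacle to the whole argument is step (ii): controlling the \emph{specific} fibre through $s$ (not merely the generic fibre) together with the residue field of $\pi_f(s)$ in positive characteristic, for which the $2$-uple Veronese re-embedding is exactly the device needed.
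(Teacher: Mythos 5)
Your plan follows essentially the same route as the paper. You decompose \eqref{eq:Chow} into three genericity conditions --- (a) $k(\pi_f(s))=k(s)$ and multiplicity one at $s$, (b) the rest of the fibre reduced, (c) the remaining points avoid $\ol B(\Gamma)$ --- and propose to realize each as the complement of a proper closed locus in $M_{NN'}$ via an incidence-variety dimension count, with the $2$-uple Veronese re-embedding doing the heavy lifting in (a) and (b) in characteristic $p$. That is exactly what the paper does, except that it does not carry out the counts itself: the conjunction of (a) and (b) is quoted verbatim from \cite[Propositions 3.2 and 3.3]{Kai} (giving $D'$), and (c) is exactly \cite[Proposition 3.5]{Kai} (giving $D''$), so the paper's proof is essentially a citation and $D_2 := D' \cup D''$.

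Two points worth flagging on your direct argument for (c). First, $s$ is only a closed point, so the translate $\ol B(\Gamma)-s$ and the incidence set $\{(v,f)\mid v\in\ol B(\Gamma)-s,\ v\neq0,\ f(v)=0\}$ live over $k(s)$, not $k$; you must push the resulting locus down along the finite map $M_{NN'}\otimes_k k(s)\to M_{NN'}$, the same device the paper uses in the proof of Lemma~\ref{lem:bad-vectors}. Second, the source of the projection to $M_{NN'}$ is not proper, so one must pass to closures (which you do implicitly by saying ``contained in a proper closed subset''). For (a) and (b), your discussion correctly identifies the hard point --- controlling the \emph{specific} (possibly inseparable) fibre through $s$ in characteristic $p$, which is where the Veronese is indispensable --- but does not actually carry it out; that argument, Chow--Chevalley style, is precisely what the cited propositions of \cite{Kai} supply.
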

\begin{proof}
    This can be shown using Chow's techniques \cite[pp.\ 458--460]{Chow}, \cite[Lamma 2 on p.\ 3-08]{Chevalley}.
    We present a proof based on a more recent account \cite{Kai}.

    First, since we are using the Veronese embedding \eqref{eq:Veronese} we can invoke
    \cite[Propositions 3.2 and 3.3]{Kai} which state that there is a closed proper subset $D'\subsetneq M_{NN'}$ such that for all $f\in M_{NN'}(k)\setminus (D'\cup D_1)$ the map $\pi _f\colon V\to \A^N $ is \'etale over $\pi _f(s)$ and the restriction $s\to \pi _f(s)$ is an isomorphism.
    This gives an equality of the form \eqref{eq:Chow} with $s$ and $x_i$'s distinct.

    It remains to show $x_i\in V\setminus \ol B(\Gamma )$.
    We need the following statement.
    \begin{proposition}[{a special case of \cite[Proposition 3.5]{Kai}}]\label{prop:Chow-in-Kai}
        Let $B\subset V$ be a proper closed subset and 
        $s\in V$ a closed point.
        Then there is a closed proper subset $D''\subsetneq M_{NN'}$ such that for all $f\in M_{NN'}(k)\setminus D ''$ we have the equality of subsets of $V$:
        \[ (\pi _f^{-1}\pi _f (s) \setminus \br s ) 
        \cap B  = \emptyset .\]
    \end{proposition}
    (To extract Proposition \ref{prop:Chow-in-Kai} from {\it loc.\ cit.}, set $X:= V$,
    $Y:= \Spec (k)$, $p=0$, $W:=B$ and $V:=\br s $.
     Also note that the only topological space having dimention $\le -1$ is the empty set.)
    
    Now set $D_2:= D'\cup D'' $ and suppose $f\in M_{NN'}(k)\setminus (D_1\cup D_2)$. Then we know $\pi _f^{-1}\pi _f (s) \setminus \br s  = \br {x_1,\dots ,x_m} $ by the first half of this proof.
    Applying Proposition \ref{prop:Chow-in-Kai} to $B:= \ol B(\Gamma )$
    we get the desired result.
    This completes the proof of Proposition \ref{prop:Chow}. 
\end{proof}

\begin{corollary}\label{cor:Chow}
    Take any $f\in M_{NN'}(k)\setminus (D_1\cup D_2)$ and form the fiber product
    $V\times _{\A^N} U$
    of $\pi _f\colon V\to \A^N$ and $\pi _f|_U \colon U\to \A^N$.
    Then it decomposes as 
    \[ V\times _{\A^N }U \cong U\sqcup \Uprime  \]
    where $\Uprime $ is finite and \'etale over $U$ by the second projection 
    and maps into $V\setminus \ol B(\Gamma )$ along the first projection.
\end{corollary}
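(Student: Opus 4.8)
The plan is to split off the required copy of $U$ by means of a tautological section. By the universal property of the fibre product, the inclusion $U\hookrightarrow V$ of the local scheme together with $\id_U$ (which agree after composing with $\pi_f$) produce a section $\delta\colon U\to W:=V\times_{\A^N}U$ of the second projection $\pr_2\colon W\to U$, with $\pr_1\circ\delta$ equal to $U\hookrightarrow V$. I would then show that $\delta$ is an open and closed immersion and take $\Uprime:=W\setminus\delta(U)$.

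First I would collect, for $f\in M_{NN'}(k)\setminus(D_1\cup D_2)$, what Proposition~\ref{prop:Chow} and the first half of its proof supply: $\pi_f\colon V\to\A^N$ is finite and flat, it is \'etale over $\pi_f(s)$, the point $s$ maps isomorphically onto $\pi_f(s)$, and $\pi_f^*\pi_{f*}s=s+\sum_{i=1}^m x_i$ with $s,x_1,\dots,x_m$ pairwise distinct and $x_i\notin\ol B(\Gamma)$. Since $\pi_f$ is finite, its open \'etale locus in $V$ has closed image in $\A^N$ not containing $\pi_f(s)$, so $\pi_f$ is finite \'etale over some Zariski open $\Omega\subseteq\A^N$ with $\pi_f(s)\in\Omega$. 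As open subsets are stable under generization and every point of the local scheme $U$ maps to a generization of $\pi_f(s)$, the structure map $U\to\A^N$ factors through $\Omega$; hence $\pr_2\colon W\to U$ is a base change of the finite \'etale map $\pi_f^{-1}(\Omega)\to\Omega$, thus itself finite \'etale. A section of a finite \'etale morphism is an open and closed immersion (it is a closed immersion since $\pr_2$ is separated, and \'etale by the cancellation property, and an \'etale closed immersion is clopen), so $W\cong\delta(U)\sqcup\Uprime$, where $\pr_2$ identifies $\delta(U)$ with $U$ and $\Uprime\to U$ is finite \'etale.

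It then remains to see that $\pr_1\colon W\to V$ sends $\Uprime$ into $V\setminus\ol B(\Gamma)$. Because $k(s)=k(\pi_f(s))$, the fibre of $\pr_2$ over the closed point of $U$ is the scheme-theoretic fibre $\pi_f^{-1}(\pi_f(s))$, which by the displayed cycle equality is the reduced scheme $\Spec k(s)\sqcup\coprod_i\Spec k(x_i)$; the summand $\Spec k(s)$ is $\delta(s)$, so the closed points of $\Uprime$ are the points $\tilde x_i$ lying over $x_i$ under $\pr_1$. Since $U$ is local, every point of $\Uprime$ specializes to some $\tilde x_i$; if some $t\in\Uprime$ had $\pr_1(t)\in\ol B(\Gamma)$, then, $\ol B(\Gamma)$ being closed in $V$, its specialization $x_i=\pr_1(\tilde x_i)$ would lie in $\ol B(\Gamma)$, contradicting Proposition~\ref{prop:Chow}. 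I expect the main obstacles to be purely technical: the spreading-out of \'etaleness together with the generization argument that places $U\to\A^N$ inside $\Omega$, and the identification of the closed fibre of $\pr_2$ with the reduced $0$-cycle $\{s,x_1,\dots,x_m\}$ needed to locate the components of $\Uprime$; everything else is formal.
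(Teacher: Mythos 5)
Your proof is correct and follows essentially the same route as the paper's: split off $U$ via the diagonal section of the finite \'etale projection $\pr_2$, then identify the closed fibre with $\{s,x_1,\dots,x_m\}$ to see that $T$ maps into $V\setminus\ol B(\Gamma)$. You merely spell out two points that the paper leaves implicit — the spreading-out of \'etaleness from the point $\pi_f(s)$ to the local scheme $U_0$, and the specialization argument upgrading "closed points of $T$ avoid $\ol B(\Gamma)$" to "all of $T$ avoids $\ol B(\Gamma)$."
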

\begin{proof}
    Since $\pi _f $ is finite and \'etale over $U_0$ by Proposition \ref{prop:Chow} and $U$ maps into $U_0$, the second projection $V\times _{\A^N} U \to U$ is \'etale (and finite by default because $f\in M_{NN'}(k)\setminus D_1$).
    Since it has the diagonal splitting $U\to V\times _{\A^N}U$ we have $V\times _{\A^N}U\cong U\sqcup \Uprime $. 
    By Proposition \ref{prop:Chow}, the set of its closed points can be computed as $V\times _{\A^N} \br{s}\cong \br{s,x_1,\dots ,x_m}$ with the right hand side having the reduced structure,
    and we know that $x_i$'s map into $V\setminus \ol B(\Gamma )$ by the first projection. This completes the proof.
\end{proof}

Consider the following Cartesian diagram where $\id $ denotes $\id _{X\times \Delta ^1}$:
\begin{equation}\label{eq:cartesian}
    \xymatrix@C=1.5cm{
        \Gamma \ar@{}|{\cap }[d]
        &   
        (U\sqcup \Uprime  )\times X\times \Delta ^1
        \ar@{-}[d]_{\cong }^{\text{Cor.\, \ref{cor:Chow}}}
        \\
        V\times X\times \Delta ^1
        \ar[d]^{\pi _f \times \id }
        &(V\times _{\A ^N}U) \times X\times \Delta ^1
        \ar[l]_{\pr _{1} \times \id }
        \ar[d]^{\pr _{2} \times \id }
        \\
        \A^N \times X\times \Delta ^1
        & 
        U\times X\times \Delta ^1.
        \ar[l]^{(\pi _f|_U )\times \id }
        \ar@{}|{\square }[ul]
        } 
    \end{equation} 
The element $(\pi _f^*\pi _{f*} \Gamma )|_U $ in \eqref{eq:tautologically} is represented by the cycle $((\pi _f|_U)\times \id )^*(\pi _f \times \id )_* \Gamma  $
on $U\times X\times \Delta ^1$.
By a slight abuse of notation let us omit $\id $'s from the notation in what follows; for example the previous expression is shortened as $(\pi _f|_U)^*\pi _{f*} \Gamma $.
By base change formula for flat pullback and proper pushforward of algebraic cycles, we know this equals $\pr _{2*}\pr _1^*\Gamma $.
Via the vertical isomorphism in \eqref{eq:cartesian}, 
if we write $\pr _{iU}$ and $\pr _{i\Uprime }$ ($i=1,2$) for the restrictions of the projections, we get
\begin{equation}\label{eq:U-part-and-U'-part}
    (\pi _f^*\pi _{f*} \Gamma )|_U
    =
    \pr _{2U*}\pr _{1U} ^*\Gamma 
    +
    \pr _{2\Uprime *}\pr _{1\Uprime } ^*\Gamma .
\end{equation} 

We know that $\pr _{1U} \colon U\to V$ is the inclusion map
and $\pr _{2U}\colon U\to U$ is the identity map.
Thus the first term is $\Gamma |_U$.
Therefore we can compute the right hand side in \eqref{eq:tautologically} as 
\begin{equation}\label{eq:U'-part-remains}
    (\Gamma - \pi _f^*\pi _{f*} \Gamma )|_U = -\pr _{2\Uprime *} \pr _{1\Uprime } ^*\Gamma . 
\end{equation} 

By Corollary \ref{cor:Chow} we know $\pr _{1\Uprime }$ maps $\Uprime $ into $V\setminus \ol B(\Gamma ) \subset V$. 
In the resulting commutative diagram:
\[
    \xymatrix@C=2.4cm{
        \Gamma \in C_1(X)(V)
        \ar[r]^{(-)|_{V\setminus \ol B(\Gamma )}}
        &
        C_1(X)(V\setminus \ol B(\Gamma ))
        \ar[r]^{\pr _{1\Uprime }^*}
        &
        C_1(X)(\Uprime )
        \\
        &
        \ol C_1(X)(V\setminus \ol B(\Gamma ))
        \ar[r]^{\pr _{1\Uprime }^*}
        \ar@{}|{\cup }[u]
        &
        \ol C_1(X)(\Uprime ),
        \ar@{}|{\cup }[u]
    }
\]
we know $\Gamma |_{V\setminus \ol B(\Gamma)}$ belongs to the subgroup $ \ol C_1(X)(V\setminus \ol B(\Gamma ))$ by the definition of $B (\Gamma )$.
It follows that $\pr _{1\Uprime }^*\Gamma \in \ol C_1(X)(\Uprime )$.
Since $\pr _{2\Uprime }\colon \Uprime \to U$ is finite, 
we conclude $\pr _{2\Uprime *}\pr _{1\Uprime }^*\Gamma \in \ol C_1(X)(U)$.

Combined with \eqref{eq:tautologically} and \eqref{eq:U'-part-remains} this shows that $\Gamma |_U =0 $ in $H_1(Q_\bullet(X))(U)$.
This completes the proof of Theorem \ref{thm:moving-lemma}.

\section{The unramified cohomology $H_\ur^1(-, W_n\Omega^j_{\log})$}
\label{sec:unram-deRW}

The aim of this short section is to prove Proposition \ref{prop:urcoh-logHW-PI} below. 

Let $X$ be a scheme over $\F_p$. For any integer $n\ge 1$, let $W_n\Omega^{\bullet}_{X}$ denote the de Rham-Witt complex of $X/\F_p$~(cf.\ \cite[I, 1.3]{Ill}). For any morphism of $\F_p$-schemes $f\colon Y\to X$, there exists a natural morphism of complexes of $W_n(\sO_Y)$-modules,
\begin{equation}\label{eq:HW}
f^{-1}W_n\Omega_X^{\bullet}\to W_n\Omega_Y^{\bullet}
\end{equation}
(cf.\ \cite[I, (1.12.3)]{Ill}), which is an isomorphism if $f$ is \'etale~(cf.\ \cite[I, Proposition 1.14]{Ill}).

For any $i\ge 0$, we denote by $W_n\Omega^i_{X,\log}$ the logarithmic Hodge-Witt sheaf of $X$ in the sense of \cite[Definition 2.6]{Shiho}. Namely it is the \'etale sheaf on $X$ defined as the image
\begin{equation*}
W_n\Omega^i_{X,\log}\colon= \mathrm{im}\bigl((\sO_X^{\times})^{\otimes i}\to W_n\Omega^i_X\bigl),
\end{equation*}
of the map $(\sO_X^{\times})^{\otimes i}\to W_n\Omega^i_X\,;\,x_1\otimes\cdots\otimes x_i\mapsto d\log[x_1]\wedge\cdots\wedge d\log [x_i]$, where $[x]\in W_n\sO_X$ is the Teichm\"uller representative of any local section $x\in\sO_X$. 
If $f\colon Y\to X$ is a morphism of $\F_p$-schemes, by the functoriality of the de Rham-Witt complexes~(\ref{eq:HW}), there exists a natural morphism of \'etale sheaves on $Y$, 
\begin{equation}\label{eq:log HW}
f^{-1}W_n\Omega_{X,\log}^i\to W_n\Omega^i_{Y,\log}.
\end{equation}


\begin{proposition}\label{prop:urcoh-logHW-PI}
Fix $n>0$ and $i \ge 0$.
We denote by $H_\ur^1(-, W_n \Omega^i_{\log})$
the Zariski sheaf on $\Sm$ associated to 
\begin{align*}
H^1(-, W_n \Omega^i_{\log}) :
X \mapsto H_\et^1(X, W_n \Omega^i_{X, \log}).
\end{align*}
Then, $H_\ur^1(-, W_n \Omega^i_{\log})$
is a $\P^1$-invariant Nisnevich sheaf,
and has a structure of presheaf with transfers.
\end{proposition}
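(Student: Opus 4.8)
The plan is to verify the three asserted properties of $H^1_\ur(-, W_n\Omega^i_{\log})$ separately, relying on classical facts about logarithmic Hodge--Witt sheaves. First I would establish that $H^1_\ur(-, W_n\Omega^i_{\log})$ is a Nisnevich sheaf. Since $W_n\Omega^i_{X,\log}$ is defined as an \'etale sheaf and behaves well under \'etale morphisms via \eqref{eq:log HW}, one gets a presheaf $X \mapsto H^1_\et(X, W_n\Omega^i_{X,\log})$ on $\Sm$; the point is that the Zariski sheafification of this presheaf already coincides with its Nisnevich sheafification. This follows from the Bloch--Kato--Gabber type results (see \cite{GS}, \cite{BK86}) identifying $W_n\Omega^i_{\log}$ \'etale-locally with a Milnor $K$-theory symbol sheaf modulo $p^n$, together with the Gersten-type resolution for logarithmic Hodge--Witt cohomology established in \cite{GS}: the unramified cohomology $H^1_\ur$ can be computed as the cokernel of a map of the form $\bigoplus_{x \in X^{(0)}} (\cdots) \to \bigoplus_{x \in X^{(1)}} (\cdots)$ in terms of residues, and this Gersten-type description is manifestly compatible with Nisnevich-local (in fact \'etale-local over the function field) inputs, so no new sections appear when passing from Zariski to Nisnevich topology.

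Next I would prove $\P^1$-invariance. The natural map $H^1_\ur(U, W_n\Omega^i_{\log}) \to H^1_\ur(U \times \P^1, W_n\Omega^i_{\log})$ should be shown to be an isomorphism for all $U \in \Sm$. Using the Gersten/residue description of unramified cohomology over the function field $k(U)$, this reduces to a statement about the function field $k(U)(\P^1) = k(U)(t)$: the unramified part of $H^1(k(U)(t), W_n\Omega^i_{\log})$ with respect to the closed points of $\P^1_{k(U)}$ equals the image of $H^1(k(U), W_n\Omega^i_{\log})$. This is precisely a $\P^1$-homotopy invariance statement for logarithmic de Rham--Witt cohomology of fields, which is classical: it is essentially contained in Izhboldin's work \cite{Izhboldin} on the $p$-part of Milnor $K$-theory and in \cite{GS}, via the localization/residue sequence for $\A^1$ over a field together with surjectivity and injectivity of the relevant residue-free part. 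Concretely, the exact sequence relating $H^1$ of the generic point, the residues at closed points, and the contribution at infinity lets one identify the unramified classes with those pulled back from the base, exactly as in the proof that Milnor $K$-theory mod $p^n$ (or $W_n\Omega^i_{\log}$) is a homotopy-invariant cycle-module-like object.

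Then I would construct the transfer structure. The cleanest route: for a finite correspondence between smooth schemes, one needs pushforward (norm/transfer) maps on unramified $W_n\Omega^i_{\log}$-cohomology compatible with the presheaf structure. On function fields, transfers for $W_n\Omega^i_{\log}$ (equivalently, for the $p$-part realized through Milnor $K$-theory mod $p^n$ and its de Rham--Witt incarnation) are classical — the norm maps on $K^M_*/p^n$ of \cite{BK86} together with their compatibility with residues (a ``Weil reciprocity'' type statement, cf.\ \cite{GS}, \cite{Izhboldin}) — and these induce transfers on the unramified subgroups because the norm maps respect the Gersten complexes. One then checks the correspondence-compatibility (composition, additivity, and the projection formula) using the standard reduction to the graph of a finite morphism and to the generic point, where everything is governed by the $K$-theoretic norm. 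Since $H^1_\ur$ is already shown to be a Zariski (hence Nisnevich) sheaf, and since the transfers are defined first at the level of generic points and then extend by the sheaf property and the Gersten resolution, this produces the required $\PST$ structure; one should also verify these transfers are compatible with the ones coming from Voevodsky's framework if one wants consistency with Definition \ref{def:PI}, but for the bare statement it suffices to exhibit an additive functor $\Cor^\op \to \Ab$ refining the Nisnevich sheaf.

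The main obstacle I expect is the transfer structure rather than $\P^1$-invariance: defining transfers on unramified cohomology requires real care about compatibility of the Bloch--Kato--Gabber norm maps on $K^M_*/p^n$ with the residue maps appearing in the Gersten resolution of $W_n\Omega^i_{\log}$, i.e.\ verifying a reciprocity law guaranteeing that norms of unramified classes stay unramified and that the resulting maps are functorial under composition of correspondences. The $\P^1$-invariance and the Nisnevich-sheaf property are comparatively formal once one has the Gersten-type resolution of \cite{GS} in hand; the genuine work — and the reason the authors advertise a proof ``depending on classical results \cite{GS, Izhboldin} but not on reciprocity sheaves'' — is assembling the transfers by hand from $K$-theoretic norms and checking the correspondence axioms.
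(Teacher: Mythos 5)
Your overall outline is in the right spirit, and your treatment of $\P^1$-invariance matches the paper's: reduce to the generic point via birational injectivity (the Gersten resolution from \cite{GS,Shiho}), then invoke Izhboldin for the statement over fields. This is exactly what Lemma~\ref{lem:p1-inv-field} formalizes, so that part is aligned.

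Where you diverge, and where I think you've misjudged the difficulty, is in the transfer structure and the Nisnevich sheaf property. You propose to build transfers ``by hand'' from norm maps on $K^M_*/p^n$, check Weil-reciprocity-type compatibility with residues, and verify the correspondence axioms directly. You even flag this as the hardest part. But the paper avoids all of that via Voevodsky's general machinery: (i) it first observes that the sheaf $W_n\Omega^i_{\log}$ itself is a presheaf with transfers, because the Bloch--Gabber--Kato theorem plus the Gersten resolution identify its sections with the kernel of a tame-symbol map between sums of Milnor $K$-groups mod $p^n$, which is known to carry transfers (this is Proposition~\ref{prop:Omega_log PST}, citing \cite{K}); (ii) then \cite[6.21]{MVW} gives the transfer structure on all \'etale cohomology presheaves $H^j(-,W_n\Omega^i_{\log})$ for free; (iii) finally, once $H^1_\ur$ is shown to be a Nisnevich sheaf, it is identified with the Nisnevich sheafification of $H^1(-,W_n\Omega^i_{\log})\in\PST$, and \cite[13.1]{MVW} says Nisnevich sheafification preserves the transfer structure. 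So no norm-compatibility with residues is ever checked at the level of unramified cohomology; it's all inherited. Your route is not obviously wrong, but it re-derives substantial parts of Voevodsky's formalism for one example, and is considerably more work.

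Conversely, the step you call ``comparatively formal''---the Nisnevich sheaf property---is where the paper does genuine new work. Your argument that the Gersten description is ``manifestly compatible'' with Nisnevich-local inputs glosses over the key point: to run the diagram chase on an elementary Nisnevich square one needs the injectivity of the bottom row of residue groups, which rests on Proposition~\ref{prop:totaro}: for an \'etale $f\colon Y\to X$ inducing an isomorphism $k(y)\cong k(x)$ at a codimension-one point, the induced map $G^{1,i}_x(X)\to G^{1,i}_y(Y)$ is bijective. For $q=0$ this follows from Shiho's isomorphism $\theta^i_x$; for $q=1$ the paper invokes Totaro's filtration on $H^1$ of a discretely valued field (and reduces $n>1$ to $n=1$ via the short exact sequence relating $W_{n-1}\Omega^i_{\log}$, $W_n\Omega^i_{\log}$, $\Omega^i_{\log}$). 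You would need this or an equivalent statement to make your sheaf argument rigorous; as written it is a gap. Also, a small slip: the Gersten sequence expresses $H^1_\ur(X)$ as a \emph{kernel} (of the residue map out of the generic point), not as a cokernel.

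In short: your $\P^1$-invariance reduction is the same as the paper's; your transfer construction is a genuinely different, heavier route that the paper's use of \cite[6.21, 13.1]{MVW} renders unnecessary; and your Nisnevich-sheaf argument is missing the local-cohomology bijectivity input (Proposition~\ref{prop:totaro}) that makes the paper's diagram chase close.
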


\begin{remark}\label{rem:use-rec}
As mentioned in the introduction,
it is known that $H_\ur^1(-, W_n \Omega^i_{\log})$
has reciprocity \cite[\S 11.1 (5)]{BRS},
hence it is $\P^1$-invariant by \cite[Theorem 8]{rec}.
We shall give a direct proof of Proposition of \ref{prop:urcoh-logHW-PI} 
which makes no use of the theory of reciprocity sheaves.
\end{remark}

To ease the notation, for $q=0,1$, we put 
\[ F^{q,i} := H^q(-, W_n \Omega^i_{\log}),
\quad
   F^{q,i}_\ur := H^q_\ur(-, W_n \Omega^i_{\log}).
\]
We have 
$F^{q,i}(S)=F^{q,i}_\ur(S)$ for any local $S$.

\begin{theorem}\label{gersten}
For any $X \in \Sm$ and any $q=0,1$, we have an exact sequence
\[ 0 \to F^{q,i}_\ur(X)
\to \bigoplus_{x \in X^{(0)}} F^{q,i}(x)
\to \bigoplus_{x \in X^{(1)}} G^{q,i}_x(X),
\]
where we set $G_x^{q,i}(X):=H^{q+1}_x(X, W_n \Omega^i_{X, \log})$. Moreover, if $q=0$, there exists a canonical isomorphism
\[
\theta^{i}_x\colon F^{0,i-1}(k(x))\xrightarrow{\simeq}G_x^{0,i}(X)
\]
for any codimension one point $x\in X^{(1)}$. 
\end{theorem}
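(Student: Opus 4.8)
The plan is to establish the Gersten-type resolution for $F^{q,i}_\ur$ with $q=0,1$ by appealing to the known Gersten resolution for the \'etale sheaves $W_n\Omega^i_{X,\log}$ on smooth varieties over a field of characteristic $p$. Concretely, for $X \in \Sm$ one has the coniveau spectral sequence
\[
E_1^{a,b} = \bigoplus_{x \in X^{(a)}} H^{a+b}_x(X, W_n\Omega^i_{X,\log}) \Longrightarrow H^{a+b}_\et(X, W_n\Omega^i_{X,\log}),
\]
and the key input is that $W_n\Omega^i_{\log}$ satisfies Gersten's conjecture, so that this spectral sequence degenerates appropriately and yields, for each fixed row $q$, an exact Cousin complex. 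The classical references for this are Gros--Suwa \cite{GS} (who prove the Gersten resolution / Bloch--Ogus property for logarithmic Hodge--Witt sheaves on smooth schemes over a perfect field) and, for the description of the local cohomology terms, the purity/d\'evissage results going back to Gros--Suwa and Izhboldin \cite{Izhboldin}. First I would recall that $H^m_x(X, W_n\Omega^i_{X,\log}) = 0$ for $m \neq \codim_X(x)$ and $m \neq \codim_X(x)+1$ when $x$ runs over points of small codimension; more precisely, since the \'etale (= Galois) cohomological $p$-dimension of the residue field $k(x)$ of a point of codimension $c$ in $X$ (a field of transcendence degree $\dim X - c$ over the base) is at most $\dim X - c + 1$, the only local cohomology sheaves that survive in the relevant range are those in cohomological degree $c$ and $c+1$. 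This pins down the two columns that appear for $q = 0$ and $q = 1$.

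Next I would extract the exact sequence. Truncating the Cousin complex at the first two terms gives, for each $q$,
\[
0 \to H^q_\Zar(X, \mathcal{H}^q) \to \bigoplus_{x \in X^{(0)}} H^q_\et(k(x), W_n\Omega^i_{\log}) \to \bigoplus_{x \in X^{(1)}} H^{q+1}_x(X, W_n\Omega^i_{X,\log}),
\]
where $\mathcal{H}^q$ is the Zariski sheafification of $U \mapsto H^q_\et(U, W_n\Omega^i_{U,\log})$; this sheafification is precisely $F^{q,i}_\ur$ by definition, and its sections over $X$ inject into the generic stalk because the first map in the Cousin complex is injective. This gives exactly the displayed exact sequence once we identify $G^{q,i}_x(X) = H^{q+1}_x(X, W_n\Omega^i_{X,\log})$. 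Note that for $q=1$ we are using that $H^1_\et(k(x), W_n\Omega^i_{\log})$ computes the generic stalk of $F^{1,i}_\ur$ and that $H^0_x$ of the relevant sheaf vanishes, which again follows from the Gersten resolution (the column $X^{(1)}$ contributes in degrees $1$ and $2$, and for the $q=1$ row we keep the degree-$2$ part).

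Finally, for the canonical isomorphism $\theta^i_x$ when $q = 0$: for a codimension-one point $x \in X^{(1)}$ with $\sO_{X,x}$ a discrete valuation ring, I would invoke the purity isomorphism for logarithmic Hodge--Witt sheaves, which is the Gysin/residue map
\[
H^{2}_x(X, W_n\Omega^i_{X,\log}) \cong H^{1}_\et(k(x), W_n\Omega^{i-1}_{k(x),\log}) = F^{0,i-1}(k(x))
\]
(note $F^{0,i-1}(k(x)) = H^0_\et(\Spec k(x), W_n\Omega^{i-1}_{\log})$ is what the theorem denotes, and the right-hand computation uses that $\Spec k(x)$ is a point so the only cohomology is in degree $0$ — I would double-check the indexing here against the statement, matching $F^{0,i-1}$ to the degree-$0$ Galois cohomology of the residue field). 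This purity statement is exactly the one proved by Gros--Suwa and refined by Izhboldin \cite{Izhboldin}; it is the local-to-global compatible residue isomorphism, functorial for the relevant maps, hence ``canonical'' in the sense asserted. The main obstacle I anticipate is not the abstract spectral sequence formalism but the careful bookkeeping of cohomological degrees: one must be sure that the vanishing ranges (coming from the bound $\mathrm{cd}_p \le 1$ on residue fields together with Gersten vanishing) genuinely cut the Cousin complex down to the three-term sequence claimed, uniformly for both $q = 0$ and $q = 1$, and that the purity isomorphism $\theta^i_x$ is stated with the correct twist $i \mapsto i-1$ in the logarithmic Hodge--Witt setting (as opposed to a shift by one in the Witt level or a Tate twist that has no analogue here). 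Once those indices are verified against \cite{GS, Izhboldin}, the argument is essentially a formal consequence of the Bloch--Ogus--Gabber machinery specialized to $W_n\Omega^i_{\log}$.
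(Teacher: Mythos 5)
Your overall strategy is the same as the paper's: the paper simply cites Gros--Suwa \cite[Theorem 1.4]{GS} and Shiho \cite[Theorem 4.1]{Shiho} for the Gersten/Bloch--Ogus resolution of $W_n\Omega^i_{\log}$ (which yields the truncated exact sequence) and Shiho \cite[Theorem 3.2]{Shiho} for the residue isomorphism $\theta^i_x$. Your unfolding of the Cousin complex is correct in substance (modulo the typo $H^q_\Zar$ where you mean $H^0_\Zar(X,\sH^q)$).

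However, your final display for $\theta^i_x$ is off by one in cohomological degree on both sides. For $q=0$, the theorem defines $G^{0,i}_x(X) = H^{1}_x(X, W_n\Omega^i_{X,\log})$, and $F^{0,i-1}(k(x)) = H^0_\et(k(x), W_n\Omega^{i-1}_{\log})$. The residue isomorphism is therefore
\[
H^{1}_x(X, W_n\Omega^i_{X,\log}) \;\cong\; H^{0}_\et(k(x), W_n\Omega^{i-1}_{\log}),
\]
not $H^2_x \cong H^1_\et(k(x),\cdot)$ as you wrote. The identity $H^1_\et(k(x), W_n\Omega^{i-1}_{\log}) = F^{0,i-1}(k(x))$ in your display is simply false, and the justification offered (``$\Spec k(x)$ is a point so the only cohomology is in degree $0$'') conflates the Zariski topos of a point with its \'etale topos; $H^1_\et(\Spec k(x),\cdot)$ is Galois cohomology of $k(x)$ and is generally nonzero. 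You did flag the indexing as something to double-check; the correct statement is exactly Shiho's \cite[Theorem 3.2]{Shiho}, which is what the paper cites for this part --- Izhboldin \cite{Izhboldin} is used elsewhere in the paper (for $\P^1$-invariance of $H^1(-,W_n\Omega^i_{\log})$ over fields), not for local purity. A smaller point: your bound on cohomological dimension of residue fields (``at most $\dim X - c + 1$'') is not the relevant one; the uniform input is $\mathrm{cd}_p(k(x)) \le 1$ for any field of characteristic $p$, which is what confines attention to $q\in\{0,1\}$.
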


\begin{proof}
For the first assertion, see \cite[Theorem 1.4]{GS}\cite[Theorem 4.1]{Shiho}. For the last assertion, see \cite[Theorem 3.2]{Shiho}. 
\end{proof}

\begin{prop}\label{prop:Omega_log PST}
The \'etale sheaf $W_n\Omega_{\log}^i$ on $\Sm$ has a structure of presheaf with transfers. Hence so does the \'etale cohomology group $H^j(-,W_n\Omega_{\log}^i)$ for any $j\ge 0$.
\end{prop}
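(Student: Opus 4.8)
The plan is to construct, for each finite correspondence $\alpha \in \Cor(X', X)$ with $X', X \in \Sm$, a transfer map $\alpha^* : W_n\Omega^i_{X,\log}(X) \to W_n\Omega^i_{X',\log}(X')$ satisfying the presheaf-with-transfers axioms (functoriality in $\alpha$, compatibility with the graph functor, additivity). The essential observation is that $W_n\Omega^i_{\log}$ is \'etale-locally of the form described above, and the classical construction of transfers for \'etale sheaves proceeds through a trace (norm) map. Concretely, if $Z \subset X' \times X$ is an integral finite correspondence, it is finite and surjective over a component $V$ of $X'$; after normalizing $Z$ and using that $Z \to V$ is finite flat generically (and one may reduce, using the Gersten resolution of Theorem \ref{gersten} applied to $W_n\Omega^i_{\log}$ and the fact that $\Sm$-schemes are regular, to working over the generic points and codimension-one points), one is reduced to constructing a norm/trace map $W_n\Omega^i_{L,\log} \to W_n\Omega^i_{K,\log}$ for a finite extension $L/K$ of function fields.

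The key step, then, is the existence of such a trace on logarithmic de Rham--Witt cohomology of fields. This is precisely what the classical references supply: the transfer (norm) maps on Kato's Milnor-type groups $H^q(K, W_n\Omega^i_{\log})$ were constructed and shown to be well-defined (independent of the choice of generators, transitive in towers) by Kato and by Gros--Suwa; for the Gersten/residue-theoretic compatibilities one invokes \cite{GS} and Shiho's \cite{Shiho} (Theorem \ref{gersten} above), together with Izhboldin's results on the structure of these groups which guarantee the needed vanishing and injectivity statements. The strategy would be to follow the template of \cite[Lecture 1]{MVW} or the argument used for Rost's cycle modules: a finite correspondence $Z$ decomposes, after base change to the generic point of each component of $X'$, into a sum of closed points of $X_{k(X')}$, each with a residue field finite over $k(X')$, and the transfer is the sum of the corresponding field-theoretic norms. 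One then checks this is independent of all choices and defines a genuine morphism of \'etale sheaves $\Z_\tr(X) \to W_n\Omega^i_{\log}$, i.e. a presheaf-with-transfers structure, by verifying the gluing along the Gersten complex.

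The main obstacle I anticipate is verifying the \emph{compatibility of the field-theoretic norm maps with specialization/residues}, i.e. that the transfer defined generically actually extends to a morphism of sheaves on all of $\Sm$ rather than just on function fields. This is where one genuinely needs the reciprocity/residue formula for $W_n\Omega^i_{\log}$ — the statement that the sum of residues of a norm equals the norm of the sum of residues — which is exactly the content packaged in the Gersten complex of Theorem \ref{gersten} being functorial for finite morphisms. Concretely one must show that the square relating the restriction to codimension-one points (the map into $\bigoplus_x G^{q,i}_x(X)$) commutes with the transfer; granting the isomorphisms $\theta^i_x$ of Theorem \ref{gersten} and Kato's reciprocity law, this reduces to the one-variable case of a complete discrete valuation field, which is classical. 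Once the transfer is defined on $H^1_\ur = H^1(-,W_n\Omega^i_{\log})$ compatibly with restriction to points, the fact that $W_n\Omega^i_{\log}$ itself (the sheaf of sections, $q=0$) and then $H^j$ for all $j$ acquire transfers follows formally, since $H^j_{\et}(-, W_n\Omega^i_{\log})$ is computed by a complex of \'etale-locally constant sheaves each of which carries the transfer, and \'etale cohomology is functorial.
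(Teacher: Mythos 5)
Your approach — directly building transfer maps by reducing to norm maps for finite field extensions and then checking residue compatibilities — is a genuinely different route from the paper, and it substitutes hard direct verifications for a clean reduction. The paper's proof rests on a step your proposal misses: by the Bloch--Gabber--Kato theorem \cite{BK86}, for any field $K$ of characteristic $p$ one has a natural isomorphism $W_n\Omega^i_{K,\log} \cong K^{\rm M}_i(K)/p^n$, so the Gersten resolution of Theorem \ref{gersten} identifies $H^0(X, W_n\Omega^i_{\log})$ with the unramified part of the Rost cycle module $K^{\rm M}_*/p^n$. One then simply invokes Kahn's theorem \cite{K} that the unramified part of any cycle module carries a (homotopy invariant) presheaf-with-transfers structure, and \cite[6.21]{MVW} for the passage to $H^j$. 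Your outline would have to re-prove the cycle-module verifications (well-definedness of the transfer on finite correspondences that are not finite flat, the composition axiom, reciprocity of norms with residues) from scratch for $W_n\Omega^i_{\log}$; these are exactly the nontrivial points that Rost's axioms and Kahn's result encapsulate, and the references you lean on (\cite{GS}, \cite{Izhboldin}, \cite{Shiho}) supply pieces of the picture but not a packaged PST statement. So your sketch is not wrong in spirit, but it stops precisely at the hard part.

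There is also a logical inversion in your final paragraph. The proposition requires one to prove that the sheaf of sections itself, i.e.\ $H^0(-, W_n\Omega^i_{\log})$, is a PST; the assertion for $H^j$ then follows by \cite[6.21]{MVW} (\'etale cohomology of an \'etale sheaf with transfers has transfers). You attempt to first define a transfer on $H^1_\ur$ and deduce the $q=0$ case from it, and you claim $H^j_{\et}(-,W_n\Omega^i_{\log})$ is computed by a complex of \'etale-locally constant sheaves each carrying transfers; but $W_n\Omega^i_{\log}$ is not \'etale-locally constant, and in any case the deduction should run in the opposite direction.
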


\begin{proof}
According to \cite[6.21]{MVW}, the second assertion is immediate from the first one. Therefore, it suffices to show that $W_n\Omega^i_{\log}\in\PST$. However, thanks to Theorem \ref{gersten} together with the theorem of Bloch--Gabber--Kato\,\cite{BK86}, for any $X\in\Sm$, we have a natural exact sequence
\[
0\to H^0(X,W_n\Omega^i_{\log})\to\bigoplus_{x \in X^{(0)}} K^{\rm M}_i(k(x))/p^n
\xrightarrow{(\partial^{\rm M}_x)} \bigoplus_{x \in X^{(1)}}K_{i-1}^{\rm M}(k(x))/p^n,
\]
where $\partial_x^{\rm M}$ is the tame symbol at each $x\in X^{(1)}$. This implies that the sheaf $W_n\Omega^i_{\log}$ has a structure of (homotopy invariant) presheaf with transfers~(cf.\ \cite{K}). This completes the proof of the proposition. 
\end{proof}

\begin{prop}\label{prop:totaro}
Let $f : Y \to X$ be an \'etale morphism in $\Sm$
which induces an isomorphism $k(y) \cong k(x)$
for some $y \in Y^{(1)}$ and $x:=f(y)$.
Then, for $q=0,1$, in the commutative diagram
\[ 
\xymatrix{
F^{q,i}(\Frac \sO_{X, x})/F^{q,i}(\sO_{X, x}) \ar[r] \ar[d] &
F^{q,i}(\Frac \sO_{Y, y})/F^{q,i}(\sO_{Y, y}) \ar[d]
\\
G_x^{q,i}(X) \ar[r] & G_y^{q,i}(Y),
}
\]
all maps are bijective.
\end{prop}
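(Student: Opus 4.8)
The plan is to prove Proposition \ref{prop:totaro} by reducing everything to the bottom row of the diagram, i.e.\ to the claim that the map $G_x^{q,i}(X) \to G_y^{q,i}(Y)$ is bijective, and then to deduce the bijectivity of the top map and of the two vertical maps from already-established structural results. First I would record that the two vertical arrows are surjective: this is the exactness of the Gersten-type complexes of Theorem \ref{gersten} combined with the fact that for a one-dimensional local ring the ``local cohomology at the closed point'' column $G_x^{q,i}$ is exactly the cokernel of $F^{q,i}(\sO) \to F^{q,i}(\Frac \sO)$ modulo contributions that vanish here; more precisely, for $S=\Spec \sO_{X,x}$ with function field $\eta$ and closed point $x$, the localization sequence in \'etale cohomology with supports gives an exact sequence
\[
F^{q,i}(\sO_{X,x}) \to F^{q,i}(\eta) \to G_x^{q,i}(X) \to H^{q+2}_{x}(\sO_{X,x}, W_n\Omega^i_{\log}),
\]
and the target vanishes (the relevant cohomological dimension is small), giving surjectivity of the left vertical map; the same applies at $y$.

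Next I would treat the bottom horizontal arrow. Since $f$ is \'etale, the comparison map \eqref{eq:log HW} $f^{-1}W_n\Omega^i_{X,\log} \to W_n\Omega^i_{Y,\log}$ is an isomorphism, and \'etale-locally around $y$ the morphism $f$ identifies the henselian (or strict henselian) local ring at $y$ with that at $x$; combined with the hypothesis $k(y)\cong k(x)$ this forces the map on local cohomology groups with support in the closed point to be an isomorphism. Concretely, one can invoke the canonical identification $\theta_x^i\colon F^{0,i-1}(k(x)) \iso G_x^{0,i}(X)$ of Theorem \ref{gersten} for $q=0$: the diagram of $\theta$'s is compatible with pullback along $f$ (this is functoriality of the residue/Gysin maps for de Rham--Witt cohomology), so for $q=0$ the bottom arrow is identified with the identity map of $F^{0,i-1}(k(x))=F^{0,i-1}(k(y))$, hence bijective. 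For $q=1$ one argues similarly, either by an explicit description of $G_x^{1,i}$ analogous to $\theta_x^i$, or — more robustly — by excision in \'etale cohomology: the \'etale-local isomorphism $\widehat{\sO}_{Y,y}\cong \widehat{\sO}_{X,x}$ together with \'etale excision for cohomology with support identifies $G_x^{q,i}(X)$ with $G_y^{q,i}(Y)$ for all $q$.

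Finally I would close the argument by a diagram chase. The bottom arrow is an isomorphism and both vertical arrows are surjective, so the top arrow is surjective. For injectivity of the top arrow, note that the kernel of $F^{q,i}(\Frac\sO_{X,x})/F^{q,i}(\sO_{X,x}) \to G_x^{q,i}(X)$ is zero by the exactness just discussed (the left vertical map is injective, being the last nonzero map in the localization sequence once we know $H^{q+1}(\sO_{X,x},-)\to H^{q+1}(\eta,-)$ is injective — which follows from the Gersten exactness of Theorem \ref{gersten} applied to the one-dimensional regular local scheme $\Spec\sO_{X,x}$). Thus the two vertical maps are in fact bijective, and then bijectivity of the bottom map transports to the top one.

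I expect the main obstacle to be the case $q=1$ of the bottom row: unlike $q=0$, there is no clean statement like $\theta_x^i$ in the excerpt identifying $G_x^{1,i}(X)$ with a value of a known sheaf, so one must argue by \'etale excision for local cohomology, being careful that the hypothesis ``$f$ \'etale inducing $k(y)\cong k(x)$'' genuinely yields an isomorphism of strictly henselian local rings at the relevant points (so that the excision isomorphism applies) rather than merely a tamely ramified or partial identification. A secondary technical point is verifying that all vertical maps are injective, which hinges on the injectivity statement in Theorem \ref{gersten}; this should be routine once the localization sequences are set up correctly, but it is where the hypotheses on cohomological dimension of $p$-torsion sheaves in characteristic $p$ are silently used.
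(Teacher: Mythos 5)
Your proof takes a genuinely different route from the paper. The paper establishes the top horizontal arrow directly: for $q=1$ and $n=1$, it invokes Totaro's filtration \cite[Theorem 4.3]{Totaro}, whose graded pieces depend only on the residue field and are respected by the \'etale pullback $\sO_{X,x}\to\sO_{Y,y}$; it then bootstraps to $n>1$ via the short exact sequence $0\to W_{n-1}\Omega^i_{\log}\to W_n\Omega^i_{\log}\to\Omega^i_{\log}\to 0$. The bottom arrow's bijectivity is then a corollary of the commutative square. You instead establish the \emph{bottom} arrow directly by \'etale excision for cohomology with supports. This is a legitimate and arguably cleaner path for $q=1$, since it avoids Totaro's filtration altogether: $\Spec\sO_{Y,y}\to\Spec\sO_{X,x}$ is \'etale (localization of an \'etale map) and restricts to an isomorphism over the closed point $\{x\}$, with $\{y\}$ exhausting the fiber over $\{x\}$ inside $\Spec\sO_{Y,y}$; combined with the isomorphism $f^{-1}W_n\Omega^i_{X,\log}\cong W_n\Omega^i_{Y,\log}$ for \'etale $f$, standard excision gives $H^{q+1}_x(\Spec\sO_{X,x},W_n\Omega^i_{\log})\cong H^{q+1}_y(\Spec\sO_{Y,y},W_n\Omega^i_{\log})$ without any detour through Henselizations or completions. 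What Totaro's route buys is explicit structural control over $F^{1,i}(\Frac R)$ which is reused elsewhere in the paper; what your route buys is brevity and independence from that external input.

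That said, your argument as written has two errors in the treatment of the vertical arrows that you should fix. First, the localization sequence terminates in $H^{q+1}(\Spec\sO_{X,x},W_n\Omega^i_{\log})$, not $H^{q+2}_x(\Spec\sO_{X,x},W_n\Omega^i_{\log})$; the relevant segment is
\[
H^{q}(\Spec\sO_{X,x},W_n\Omega^i_{\log}) \to H^{q}(\eta,W_n\Omega^i_{\log}) \to H^{q+1}_x(\Spec\sO_{X,x},W_n\Omega^i_{\log}) \to H^{q+1}(\Spec\sO_{X,x},W_n\Omega^i_{\log}).
\]
Second, and more substantively, the vanishing of the right-hand term only holds for $q=1$ (where $H^2(\Spec\sO_{X,x},W_n\Omega^i_{\log})=0$ by the bound on $p$-cohomological dimension). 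For $q=0$ the term $H^1(\Spec\sO_{X,x},W_n\Omega^i_{\log})=F^{1,i}(\sO_{X,x})$ is generally nonzero, so surjectivity of the vertical maps does \emph{not} follow from this vanishing argument, and your diagram chase is blocked for $q=0$. The fix is what the paper implicitly does: for $q=0$, use $\theta^i_x$ together with the Bloch--Gabber--Kato theorem to identify the vertical map with the tame residue $K^{\mathrm M}_i(\eta)/p^n \to K^{\mathrm M}_{i-1}(k(x))/p^n$, which is surjective with kernel exactly $K^{\mathrm M}_i(\sO_{X,x})/p^n$; this gives vertical bijectivity for $q=0$ directly, and then the top arrow follows from the bottom one. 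Injectivity of the vertical maps, which you do get right, is indeed a formal consequence of exactness of the localization sequence (you do not actually need the separate injectivity of $H^{q}(\sO)\to H^{q}(\eta)$ from Theorem \ref{gersten} for that step, since the domain of the vertical map is the cokernel).
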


\begin{proof}
For $q=0$, the assertion follows from the last claim of Theorem \ref{gersten}. So, let us assume that $q=1$. Then the bijectivity of the left vertical map is a consequence of the localization sequence
\[ F^{1,i}(\sO_{X, x}) \to F^{1,i}(\Frac \sO_{X, x})
\to G_x^{1,i}(X) \to H^2(\sO_{X, x}, W_n \Omega^i_{X, \log})=0,
\]
and the same for the right vertical map. The statement for the upper horizontal map in the case when $n=1$ is a consequence of \cite[Theorem 4.3]{Totaro}. Indeed, given any discrete valuation ring over $k$,
the cited theorem shows that 
there is an exhaustive filtration 
$F^{1,i}(R)=U_{-1} \subset U_0 \subset U_1 \subset \cdots \subset F^{1,i}(\Frac R)$
whose graded quotients are described solely in terms of the residue field.
In our situation, 
$F^{1,i}(\Frac \sO_{X, x}) \to F^{1,i}(\Frac \sO_{Y, y})$
respects this filtration
because $\sO_{X, x} \to \sO_{Y, y}$ is \'etale.
Hence the desired bijectivity follows
from the assumption $k(y) \cong k(x)$. For $n>1$, thanks to the exact sequence
\[
0\to W_{n-1}\Omega^i_{X,\log}\to W_{n}\Omega^i_{X,\log}\to \Omega^i_{X,\log}\to 0
\]
for any regular scheme $X$ over $\F_p$~(cf.\ \cite[Proposition 2.12]{Shiho}), one can inductively see the bijectivity of the map $F^{1,i}(\Frac \sO_{X, x})/F^{1,i}(\sO_{X, x})\to F^{1,i}(\Frac \sO_{Y, y})/F^{1,i}(\sO_{Y, y})$. This completes the proof. 
\end{proof}

\begin{proof}[Proof of Proposition \ref{prop:urcoh-logHW-PI}]
We first show that $F^{1,i}_\ur$ is a sheaf for Nisnevich topology.
For this, we take a Cartesian diagram in $\Sm$
\[
\xymatrix{
V \ar[r] \ar[d] & Y \ar[d]^f \\
U \ar[r]^j & X,
}
\]
where 
$j$ is an open dense immersion
and $f$ is an \'etale morphism 
that is an isomorphism over $X \setminus j(U)$.
By \cite[12.7]{MVW}, 
it suffices to prove the exactness of the upper row
in the commutative diagram:
\[
\xymatrix{
& 0 \ar[d] & 0 \ar[d] & 0 \ar[d]
\\
0 \ar[r] &
F^{1,i}_\ur(X) \ar[r] \ar[d] &
F^{1,i}_\ur(U) \oplus F^{1,i}_\ur(Y) \ar[r] \ar[d] &
F^{1,i}_\ur(V) \ar[d] 
\\
0 \ar[r] &
\underset{x \in X^{(0)}}{\bigoplus} F^{1,i}(x) \ar[r] \ar[d] &
\underset{x \in U^{(0)}}{\bigoplus} F^{1,i}(x) \oplus
\underset{y \in Y^{(0)}}{\bigoplus} F^{1,i}(y) \ar[r] \ar[d] &
\underset{y \in V^{(0)}}{\bigoplus} F^{1,i}(y) 
\\
&
\underset{x \in X^{(1)}}{\bigoplus} G_x^{1,i}(X) \ar[r]_-{(*)}  &
\underset{x \in U^{(1)}}{\bigoplus} G_x^{1,i}(U) \oplus
\underset{y \in Y^{(1)}}{\bigoplus} G_y^{1,i}(Y). & 
}
\]
The second row is exact for obvious reason
and all columns are exact by Theorem \ref{gersten}.
The map $(*)$ is injective by Proposition \ref{prop:totaro}
(and the assumption on $f$).
Now the claim follows by diagram chasing.
As a consequence, we can find that $F^{1,i}_{\ur}$ is the same as the Nisnevich sheaf associated with $F^{1,i}\in\PST$~(cf.\ Proposition \ref{prop:Omega_log PST}). Therefore, we conclude $F^{1,i}_\ur \in \PST$ by \cite[13.1]{MVW}. 

Finally, to show that $F^{1,i}_{\ur}$ is $\P^1$-invariant, we apply Lemma \ref{lem:p1-inv-field} below.
The condition (1) is due to Izhboldin~\cite{Izhboldin}~
(see also \cite[Theorem 4.4]{Totaro}),
and (2) is a part of Theorem \ref{gersten}.
\end{proof}

\begin{lemma}\label{lem:p1-inv-field}
Let $F \in \PST$.
Suppose (1) and (2) below:
\begin{enumerate}
\item 
For any $K \in \Fld_k^\gm$,
$\sigma_K^* : F(\Spec K) \cong F(\P^1_K)$
is an isomorphism, where
$\sigma_K$ denotes the base change of 
the structure morphism 
$\sigma : \P^1 \to \Spec k$.
\item 
Any open immersion $U \hookrightarrow V$ in $\Sm$
induces an injection $F(V) \to F(U)$.
\end{enumerate}
Then $F$ is $\P^1$-invariant.
\end{lemma}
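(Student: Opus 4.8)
The plan is to reduce the general $\P^1$-invariance statement, namely $F(U) \xrightarrow{\sim} F(U \times \P^1)$ for all $U \in \Sm$, to the generic fiber statement in hypothesis (1) by using the injectivity hypothesis (2) to ``spread out'' from function fields to all smooth schemes. First I would reduce to the case where $U$ is irreducible (transfers preserve finite direct sums, and $\Sm$-schemes are finite disjoint unions of irreducible ones, so the general case follows componentwise). So assume $U$ is irreducible with function field $K = k(U) \in \Fld_k^\gm$.

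The key step is the following commutative diagram, in which $\eta = \Spec K$ is the generic point of $U$:
\[
\xymatrix{
F(U) \ar[r]^-{\pr^*} \ar@{^{(}->}[d]_{\text{(2)}} & F(U \times \P^1) \ar@{^{(}->}[d]^{\text{(2)}} \\
F(\eta) \ar[r]^-{\sigma_K^*}_-{\sim} & F(\eta \times \P^1).
}
\]
The left vertical map is injective by (2), applied to the inclusion of $\eta$ as a (pro-)open subscheme of $U$ (more precisely $\eta = \lim_V V$ over nonempty open $V \subseteq U$, and injectivity passes to the filtered colimit defining $F(\eta)$ via \eqref{eq:colim-F-ext}). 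The right vertical map is likewise injective because $\eta \times \P^1$ is a pro-open subscheme of $U \times \P^1$ (it is the preimage of $\eta$, which is the filtered intersection of the opens $V \times \P^1$), and again (2) passes to the colimit. The bottom map is an isomorphism by hypothesis (1). The diagram commutes by functoriality of $F$ since $\pr : U \times \P^1 \to U$ and $\sigma_K : \eta \times \P^1 \to \eta$ fit into the evident commuting square with the two vertical restriction maps.

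From this diagram, a routine diagram chase finishes the argument. Injectivity of $\pr^*$: if $\pr^* x = 0$ then its image in $F(\eta \times \P^1)$ is zero, but that image equals $\sigma_K^*$ of the image of $x$ in $F(\eta)$, so the image of $x$ in $F(\eta)$ is zero by injectivity of $\sigma_K^*$, hence $x = 0$ by injectivity of the left vertical map. Surjectivity of $\pr^*$: given $y \in F(U \times \P^1)$, its image $\bar y \in F(\eta \times \P^1)$ is $\sigma_K^*(z)$ for a unique $z \in F(\eta)$; one must check $z$ lies in the image of $F(U) \hookrightarrow F(\eta)$. For this I would replace $U$ by a sufficiently small nonempty open $V$ so that (after shrinking) $z$ is represented by an element $z_V \in F(V)$ and the two elements $\pr^*(z_V)$ and $y|_{V \times \P^1}$ of $F(V \times \P^1)$ have the same image in $F(\eta \times \P^1)$; by injectivity of $F(V \times \P^1) \hookrightarrow F(\eta \times \P^1)$ they are equal, so $y|_{V \times \P^1} = \pr^*(z_V)$. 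This gives that $y$ restricted to a dense open pulls back from the base; a further argument (again using (2) for $V \times \P^1 \hookrightarrow U \times \P^1$, combined with the already-established injectivity of $\pr^*$ for $U$) shows that $z_V$ itself extends to an element of $F(U)$ pulling back to $y$. The main obstacle I anticipate is precisely this last ``global'' extension step: one has surjectivity-up-to-shrinking for free, but promoting it to surjectivity over all of $U$ requires care — the cleanest route is to use that $F(U) \to F(U \times \P^1) \to F(\eta \times \P^1)$ is injective and that an element whose generic restriction comes from $F(U)$ must, by commutativity and the already-proven injectivity of $\pr^*$, actually come from $F(U)$ on the nose.
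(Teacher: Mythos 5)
There is a genuine gap, and it is exactly where you anticipate one: the surjectivity step. Your diagram-chase establishes that, given $y \in F(U \times \P^1)$, there is a unique $z \in F(\eta)$ with $\sigma_K^*(z) = \bar y$, and that after shrinking to some nonempty open $V \subseteq U$ you have $y|_{V\times\P^1} = \pr^*(z_V)$. But from here you merely assert that $z_V$ ``extends to an element of $F(U)$ pulling back to $y$,'' and nothing you have written delivers that. Hypothesis (2) gives \emph{injectivity} of restrictions, which is useless for producing a class on a \emph{larger} open; and the ``already-proven injectivity of $\pr^*$'' cannot manufacture a preimage either. As stated, the argument only shows $\pr^*(F(U)) \subseteq F(U\times\P^1) \subseteq F(\eta)$ are both subgroups of $F(\eta)$, without ruling out that the middle one is strictly larger.

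The missing idea — and the one the paper's proof is built around — is the section $i_\epsilon \colon U \to U\times\P^1$ coming from a $k$-rational point $\epsilon \in \P^1(k)$. This section produces the candidate preimage for free: set $x := i_\epsilon^*(y) \in F(U)$. Then restricting to the generic point and using $i_{\epsilon,K}^*\sigma_K^* = \id$ one gets $x|_\eta = i_{\epsilon,K}^*(\bar y) = i_{\epsilon,K}^*\sigma_K^*(z) = z$, hence $\pr^*(x)$ and $y$ have the same image in $F(\eta\times\P^1)$, and injectivity (2) forces $\pr^*(x) = y$. The paper encodes this splitting structurally: the $k$-point of $\P^1$ splits $\sigma^*\colon F \to \uHom(\Z_\tr(\P^1),F)$, so $\uHom(\Z_\tr(\P^1),F) \cong F \oplus G$ with $G$ the complementary summand in $\PST$. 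Then $G$ inherits property (2), vanishes on function fields by (1), and so $G(U)\hookrightarrow G(k(U)) = 0$ for every irreducible $U$ — which handles injectivity and surjectivity simultaneously and avoids any shrinking-and-reglobalizing. Your approach can be repaired by inserting the $i_\epsilon^*$ step, after which it becomes essentially equivalent to the paper's; but as written the crucial surjectivity half is not proved.
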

\begin{proof}
Define $G \in \PST$ by the formula
\begin{equation}\label{eq:p1-summand}
 G(U) := \Coker(\sigma^* : F(U) \to 
\uHom(\Z_\tr(\P^1), F)(U)=F(U \times \P^1))
\qquad (U \in \Sm).
\end{equation}
We have a direct sum decomposition
$\uHom(\Z_\tr(\P^1), F) \cong F \oplus G$
(provided by a $k$-rational point of $\P^1$),
and $G(U)=0$ holds if and only if 
the map in \eqref{eq:p1-summand} is an isomorphism.
By (1), we have $G(\Spec K)=0$ for any $K \in \Fld_k^\gm$.
The property (2) for $F$ implies 
the same property for $\uHom(\Z_\tr(\P^1), F)$ and hence for $G$.
We conclude that $G(U) \hookrightarrow G(k(U))=0$
for any (irreducible) $U \in \Sm$,
which means $F$ is $\P^1$-invariant.
\end{proof}

%


\appendix

\section{proof of Remark \ref{prop:blowup}}\label{sec:app}

We give a proof of Remark \ref{prop:blowup}.
(1) is obvious.
(2) is a consequence of 
the formula $h_0(X \times X')=h_0(X) \otimes h_0(X')$.
For (3) and (4), we shall freely use 
Voevodsky's triangulated category $\DM^-_\eff(k) \subset D^-(\NST)$ of
effective motivic complexes over $k$ \cite{voetri}.
For $V \in \Sm$,
we denote the motivic complex of $V$ by
$M(V):=C_*(\Z_\tr(V)) \in \DM^-_\eff(k)$.
Recall that its homology sheaves 
$h_n(V):=H_n(M(V))$ are trivial if $n<0$
and for $n=0$ it recovers $h_0(V)$ defined in (V\ref{V2}).

\begin{lemma}\label{lem:app}
Let $c>0$ and let $j : U \to X$ be an open immersion in $\Sm$.
Define $M(X/U)$ to be the cone of $j_*:M(U) \to M(X)$.
If each component of 
$Z := (X \setminus U)_\red$ is of codimension $\ge c$ in $X$,
then $h_n(X/U):=H_n(M(X/U))$ vanishes for any $n<c$.
\end{lemma}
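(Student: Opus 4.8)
The plan is to induct on $c$, using a local-to-global argument that peels off the highest-codimension strata. The key tool is the Gysin triangle in $\DM^-_\eff(k)$: for a smooth closed pair $(X,Z)$ with $Z$ of pure codimension $d$, one has a distinguished triangle $M(X\setminus Z)\to M(X)\to M(Z)(d)[2d]\to M(X\setminus Z)[1]$, hence $M(X/(X\setminus Z))\simeq M(Z)(d)[2d]$. Since Tate twisting by $d$ and shifting by $[2d]$ sends homology sheaves in degrees $\ge 0$ to homology sheaves in degrees $\ge 2d$, and $M(Z)$ has trivial homology in negative degrees (as recalled just before the lemma), we get $h_n(X/(X\setminus Z))=0$ for all $n<2d\le d+1$; in particular for $n<c$ when $d\ge c$. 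The subtlety is that $Z=(X\setminus U)_\red$ need not be smooth or of pure codimension, so I cannot apply the clean Gysin triangle directly to the pair $(X,Z)$.

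First I would reduce to the case where $Z$ is smooth of pure codimension. Stratify $Z$ by codimension: let $Z_{\min}$ be the union of components of minimal codimension $c'\ge c$; its smooth locus $Z^{sm}_{\min}$ is open dense in those components. Shrinking, set $Z':=Z_{\min}\setminus Z^{sing}$ where $Z^{sing}$ is the union of the singular locus of $Z$ and all lower-dimensional strata — this $Z'$ is closed in the open subscheme $X':=X\setminus Z^{sing}$, smooth, of pure codimension $c'$, and $X'\setminus Z'$ contains $U$. I would then feed this into the octahedral axiom: compare $M(X/U)$ with $M(X'/U)$ and with $M(X/X')$, where $X\setminus X'=Z^{sing}$ has all components of codimension $\ge c'+1>c$ (as it consists of proper closed subsets of a codimension-$c'$ scheme, hence of codimension $\ge c+1$ in $X$). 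By the induction hypothesis applied to the pair $(X, X\setminus Z^{sing})$ — note $Z^{sing}$ need not be smooth either, so strictly this is again the statement of the lemma for a larger value of the codimension bound, which is exactly what the induction on $c$ provides — we get $h_n(X/X')=0$ for $n<c+1$, i.e.\ for $n\le c$. Combined with $h_n(X'/U)=h_n(X'/(X'\setminus Z')\ \text{completed by}\ \ldots)$ controlled by the Gysin computation plus a further induction on the number of strata, the long exact homology sequence of the triangle $M(X/X')[-1]\to M(X'/U)\to M(X/U)\to M(X/X')$ (or its rotation) forces $h_n(X/U)=0$ for $n<c$.

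More cleanly, I would organize the whole argument as a double induction: outer induction on $c$, inner (downward) induction on $\dim Z$, or equivalently on the number of irreducible components of $Z$. The base of the inner induction is $Z$ irreducible and smooth, handled by the Gysin triangle as above. For the inductive step, write $Z=Z_1\cup Z_2$ with $Z_1$ a component (or the highest stratum), put $U_1:=X\setminus Z_1\supset U$, and use the triangle relating $M(X/U)$, $M(X/U_1)$, and $M(U_1/U)$: here $X\setminus U_1=Z_1$ has codimension $\ge c$ and fewer components, while $U_1\setminus U=Z_2\cap U_1=Z_2\setminus Z_1$, again of codimension $\ge c$ in $U_1$ with fewer components, so both outer terms have vanishing $h_n$ for $n<c$ by the inductive hypothesis, hence so does the middle one.

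The main obstacle I anticipate is the bookkeeping needed to make the Gysin triangle applicable: one must genuinely engineer a smooth closed pair by excising singular and lower-dimensional loci, and then verify that the excised pieces have strictly larger codimension so that the induction closes. The homological algebra itself (long exact sequences, octahedron, Tate-twist shifting degrees) is routine once the stratification is set up correctly, so the real care goes into the geometric reduction and tracking codimensions through each step.
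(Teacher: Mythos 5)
Your proposal follows essentially the same route as the paper: Gysin triangle for the case of a smooth closed complement, excision of the singular locus $Z'$ (whose components have codimension $\ge c+1$), and the distinguished triangle $M(U'/U)\to M(X/U)\to M(X/U')$ closed up by induction. One small slip in the degree bookkeeping: since $M(Z)(d)[2d]\cong M(Z)\otimes(\G_m[0])^{\otimes d}[d]$, the effective shift on homology is only $[d]$, so the Gysin computation gives $h_n(X/(X\setminus Z))=0$ for $n<d$, not $n<2d$ (and the line ``$n<2d\le d+1$'' is garbled); this is harmless here because $d\ge c$ still yields the needed vanishing for $n<c$.
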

\begin{proof}
If $Z$ is smooth over $k$,
then by the Gysin triangle \cite[Thm. 15.15]{MVW} we have
\[ M(X/U) \cong M(Z)(c)[2c] \cong M(Z) \otimes (\G_m[0])^{\otimes c}[c],
\]
from which the statement follows.
In general, let $Z' \subset Z$ be the singular locus of $Z$,
and $U':=X \setminus Z'$.
There is a distinguished triangle
\[ M(U'/U) \to M(X/U) \to M(X/U') \to M(U'/U)[1]. \]
Since each component of $Z'$ has codimension $\ge c+1$ in $X$,
we may assume $h_n(X/U')=0$ for any $n \le c$ by induction.
Since $U' \setminus U = Z \setminus Z'$ is smooth over $k$,
we have shown $h_n(U'/U)=0$ for $n<c$.
It follows that $h_n(X/U)=0$ for $n<c$.
\end{proof}

Now (3) immediately follows from Lemma \ref{lem:app}.
To show (4), let $f : X \to Y$ be a proper birational morphism in $\Sm$.
Let $V \subset Y$ be the open dense subset 
on which  $f^{-1}$ is defined.
Then $f$ restricts to an isomorphism 
$U:=f^{-1}(V) \overset{\cong}{\longrightarrow} V$ 
and $Y \setminus V$ has codimension $\ge 2$ in $Y$.
We have a commutative diagram
\[
\xymatrix{
&
h_0(U) \ar[r] \ar[d]_\cong &
h_0(X) \ar[r] \ar[d] 
& h_0(X/U)=0 \ar[d]
\\
0=h_1(Y/V) \ar[r] &
h_0(V) \ar[r]  &
h_0(Y) \ar[r]  
& h_0(Y/V)=0,
}
\]
in which we used Lemma \ref{lem:app} for the vanishing.
This proves (4).
\qed


\bibliographystyle{plain}

\begin{thebibliography}{99}

\bibitem{ABBG} 
Auel, A., Bigazzi, A., B\"ohning, C., Graf von Bothmer, H.-C.,
\textit{Universal triviality of the Chow group of $0$-cycles and the Brauer group}, 
Int. Math. Res. Not. IMRN, rnz171 (2019).


\bibitem{ABBG2} 
Auel, A., Bigazzi, A., B\"ohning, C., Graf von Bothmer, H.-C.,
\textit{Unramified Brauer groups of conic bundle threefolds in characteristic two}, (2018), 
\url{https://arxiv.org/abs/1806.02668}.



\bibitem{AM}
Atiyah, M. F., Macdonald, I. G.,
    \textit{Introduction to Commutative Algebra},
    Addison-Wesley Publishing Company, Inc.; 
    Massachusetts, 1969. 


\bibitem{Ay}
Ayoub, J.,
\textit{$\P^1$-localisation et une classe de Kodaira-Spencer arithm\'etique}, 
Tunis. J. Math. {\bf 3} (2021), no. 2, 259--308.

\bibitem{BRS} 
Binda, F., R\"ulling, K., Saito, S., 
\textit{On the cohomology of reciprocity sheaves},
preprint (2020),
\url{https://arxiv.org/abs/2010.03301}.


\bibitem{BK86}
Bloch, S., Kato, K., 
\textit{{$p$}-adic \'{e}tale cohomology}, 
Inst. Hautes \'{E}tudes Sci. Publ. Math., (63) (1986), 107--152.

\bibitem{Chevalley}
Chevalley, C.,
\textit{Les classes d'\'equivalence rationnelle, II}, S\'eminaire Claude Chevalley {\bf 3} (1958), no.\ 3.

\bibitem{Chow}
Chow, W.-L.,
\textit{On Equivalence Classes of Cycles in an Algebraic Variety},
Annals of Mathematics, Second Series, {\bf 64} (Nov., 1956), no.\ 3, 450--479.


\bibitem{CSS} 
Colliot-Th\'el\`ene, J., Sansuc, J., 
Soul\'e, C.,  
\textit{Torsion dans le groupe de Chow de codimension deux}, 
Duke Math. J. {\bf 50} (1983), no. 3, 763--801.

\bibitem{CTP} 
Colliot-Th\'el\`ene, J., Pirutka, A.,
\textit{Hypersurfaces quartiques de dimension 3: non-rationalit\'e stable}, 
Ann. Sci. \'Ec. Norm. Sup\'er. (4) {\bf 49} (2016), no. 2, 371--397.



\bibitem{EGA}
Dieudonn\'{e}, J., Grothendieck, A.,
\newblock \textit{\'{E}l\'{e}ments de g\'{e}om\'{e}trie alg\'{e}brique: IV},
part 2,
\newblock {Publ.\ Math.\ Inst.\ Hautes {\'E}tudes Sci.} {\bf 24} (1965), 5--231 and part 3, {\bf 28} (1966), 5-255.


%

\bibitem{GS}
Gros, M., Suwa, N.,  
\textit{La conjecture de Gersten pour les faisceaux de Hodge-Witt logarithmique}, 
Duke Math. J. {\bf 57} (1988), no. 2, 615--628.

\bibitem{Hartshorne}
Hartshorne, R.,
    \textit{Algebraic Geometry},
    Graduate Texts in Mathematics {\bf 52},
    Springer Science+Business Media, Inc.; New York, 1977.

\bibitem{Ill}
Illusie, L., 
\textit{Complexe de de {R}ham-{W}itt et cohomologie cristalline}, 
Ann. Sci. \'{E}cole Norm. Sup.\ (4), {\bf 12} (1979), no. 4, 501--661. 


\bibitem{Izhboldin}
Izhboldin, O. T., 
\textit{On the cohomology groups of the field of rational functions}, 
Mathematics in {S}t. {P}etersburg, Amer. Math. Soc. Transl. Ser. 2, {\bf 174} (1996), 21--44.


\bibitem{Koizumi}
Koizumi, J.,
\textit{Zeroth $\A^1$-homology of smooth proper varieties},
preprint (2021), \url{https://arxiv.org/abs/2101.04951}.


\bibitem{K}
Kahn, B., 
\textit{Relatively unramified elements in cycle modules}, 
J. K-Theory {\bf 7} (2011), no. 3, 409--427.

\bibitem{rec} 
Kahn, B., Saito, S., Yamazaki, T., {\it Reciprocity sheaves} (with two appendices by Kay R\"ulling), Compos. Math. {\bf 152} (2016), no. 9, 1851--1898. 

\bibitem{kmsy3}
Kahn, B., Miyazaki, H., Saito, S., Yamazaki, T., 
{\it Motives with modulus, III: The category of motives},
\url{https://arxiv.org/abs/2011.11859}, to appear in Annals of $K$-Theory.


\bibitem{KS}
Kahn, B., Sujatha, R.,
{\it Birational motives, II: Triangulated birational motives},
Int.\ Math.\ Res.\ Not.\ IMRN, (2017), no.\ 22, 6778--6831.


\bibitem{Kai}
Kai, W.,
{\it A Moving Lemma for Algebraic Cycles With Modulus and Contravariance},
Int.\ Math.\ Res.\ Not.\ IMRN, (2021), no.\ 1, 475--522.

\bibitem{M} 
Merkurjev, A., 
{\it Unramified elements in cycle modules},
J. Lond. Math. Soc. (2) 78 (2008), no. 1, 51--64.


\bibitem{MVW} 
Mazza, C., Voevodsky, V., Weibel, C., 
{\it Lecture notes on motivic cohomology}, 
Clay Mathematics Monographs {\bf 2}, American Mathematical Society, Providence, RI; Clay Mathematics Institute, Cambridge, MA, 2006.

\bibitem{O} 
Otabe, S., 
{\it On the mod $p$ unramified cohomology of varieties having universally trivial Chow group of zero-cycles}, 
\url{https://arxiv.org/abs/2010.03808}.


\bibitem{RS}
R\"ulling, K., Saito, S., 
{\it Reciprocity sheaves and abelian ramification theory}, 
to appear in J. Inst. Math. Jussieu,
\url{https://arxiv.org/abs/1812.08716}.


\bibitem{Shiho}
Shiho, A., 
{\it On logarithmic {H}odge-{W}itt cohomology of regular schemes}, 
J. Math. Sci. Univ. Tokyo, {\bf 14} (2007), no. 4, 567--635.


\bibitem{Shimizu}
Shimizu, Y.,
{\it Universal birational invariants and $\A^1$-homology},
preprint (2020),
\url{https://arxiv.org/abs/2002.05918}.




\bibitem{Totaro16}
Totaro, B., 
{\it Hypersurfaces that are not stably rational}, 
J. Amer. Math. Soc. {\bf 29} (2016), no.\ 3,  883--891.

\bibitem{Totaro} 
Totaro, B., 
{\it Cohomological invariants in positive characteristic}, Int. Math. Res. Not. IMRN, rnaa321 (2021). 





\bibitem{V} 
Voevodsky, V., 
{\it Cohomological theory of presheaves with transfers},  
Cycles, transfers, and motivic homology theories, 87--137,
Ann. of Math. Stud., 143, Princeton Univ. Press, Princeton, NJ, 2000.

\bibitem{voetri} 
Voevodsky, V., 
{\it Triangulated categories of motives over a field}, 
Cycles, transfers, and motivic homology theories, 188--238,
Ann. of Math. Stud., 143, Princeton Univ. Press, Princeton, NJ, 2000.

\bibitem{Voisin}
Voisin, C., 
{\it Unirational threefolds with no universal codimension {$2$} cycle}, 
Invent. Math. {\bf 201} (2015), no.\ 1, 207--237.

\end{thebibliography}

\end{document}